\newtheorem{theorem}{Theorem}[section]
\newtheorem{corollary}[theorem]{Corollary}
\newtheorem{lemma}[theorem]{Lemma}
\newtheorem{example}[theorem]{Example}
\newtheorem{proposition}[theorem]{Proposition}
\theoremstyle{definition}
\numberwithin{equation}{section}
\theoremstyle{remark}
\newtheorem{rem}[theorem]{Remark}
\theoremstyle{remark}
\newcommand{\beql}[1]{\begin{equation}\label{#1}}
\newcommand{\eeq}{\end{equation}}
\begin{document}

\title[Equiangular lines in Euclidean spaces]{Equiangular lines in Euclidean spaces}

\author[G. Greaves, J. H. Koolen, A. Munemasa, and F. Sz\"oll\H{o}si]{Gary Greaves, Jacobus H. Koolen,\\ Akihiro Munemasa, and Ferenc Sz\"oll\H{o}si}

\date{\today.}%\\ Preprint.}

\address{G. G., A. M., and F. Sz.: Research Center for Pure and Applied Mathematics, Graduate School of Information Sciences,
Tohoku University, Sendai 980-8579, Japan}\email{grwgrvs@gmail.com, munemasa@math.is.tohoku.ac.jp, szoferi@gmail.com}\address{J. H. K.: School of Mathematical Sciences, University of Science and Technology of China, Hefei, Anhui, 230026, P. R. China}\email{koolen@ustc.edu.cn}

%\thanks{This work was supported by the Hungarian National Research Fund OTKA K-77748 and by the JSPS KAKENHI Grant Numbers $24\cdot02789$ and $24\cdot02807$. Part of the computational results in this research were obtained using supercomputing resources at Cyberscience Center, Tohoku University.}

%\dedicatory{Dedicated to the memory of Alton Thomas Butson $(1926-1997)$}

\begin{abstract}
We obtain several new results contributing to the theory of real equiangular line systems. Among other things, we present a new general lower bound on the maximum number of equiangular lines in $d$ dimensional Euclidean space; we describe the two-graphs on $12$ vertices; and we investigate Seidel matrices with exactly three distinct eigenvalues. As a result, we improve on two long-standing upper bounds regarding the maximum number of equiangular lines in dimensions $d=14$, and $d=16$. Additionally, we prove the nonexistence of certain regular graphs with four eigenvalues, and correct some tables from the literature.
\end{abstract}
\subjclass[2010]{Primary 05B20, secondary 05B40}
\maketitle

%{\bf 2010 Mathematics Subject Classification.} Primary 05B20, secondary 05B40.
	
{\bf Keywords and phrases.} {\it Equiangular lines, Seidel matrix, Switching, Two-graph}

\section{Introduction}
Let $d\geq 1$ be an integer and let $\mathbb{R}^d$ denote the Euclidean $d$-dimensional space equipped with the usual inner product $\left\langle .,.\right\rangle$. A set of $n\geq 1$ lines, represented by the unit vectors $v_1, v_2, \hdots, v_n\in\mathbb{R}^d$, is called \emph{equiangular} if there exists a constant $\alpha\geq0$, such that \mbox{$\left\langle v_i,v_j\right\rangle=\pm\alpha$} for all $1\leq i<j\leq n$. This constant is referred to as the \emph{common angle} between the lines. If $\alpha=0$ then the line system is just a subset of an orthonormal basis. If $n\leq d$ then it is easy to construct equiangular line systems for all $0\leq \alpha\leq 1$. Hence we exclude these trivial cases by assuming that $n>d$ and consequently $\alpha>0$.

Equiangular lines were introduced first by Haantjes \cite{HAA} in $1948$ and then investigated by Van Lint and Seidel in a seminal paper \cite{vLS} and were further studied during the $1970$s \cite{BMS, C, N, TTH}. Recently there has been an interest in some special, highly structured \emph{complex} equiangular line systems \cite{GR}. In the engineering literature these objects are called \emph{tight frames}, and they are used for various applications in signal processing \cite{FS, HP}. Some of these complex tight frames are studied by physicists under the name of \emph{SIC-POVMs}, and are used in quantum tomography \cite{ABG, SG}. Discussion of the complex case and these applications are beyond the scope of this paper.

This paper is motivated by the fundamental problem of determining the \emph{maximum number}, $N(d)$, \emph{of equiangular lines} in $\mathbb{R}^d$. In Table~\ref{xyz} we display the current best known lower and upper bounds on $N(d)$ for the first few values of $d$. The table includes the contributions of this paper; compare to the relevant table in \cite{LS}.

\[{\arraycolsep=1.3pt
\begin{array}{c|cccccccccccccccc}
d        & 2 & 3        & 4           & 5  & 6  & 7$--$13 & 14  & 15 & 16 & 17 & 18 & 19 & 20 & 21  & 22  & 23$--$41\\  
N(d)        & 3 & 6        & 6           & 10 & 16 & 28    & 28$--$29  & 36 & 40$--$41 & 48$--$50 & 48$--$61 & 72$--$76 & 90$--$96 & 126 & 176 & 276\\
1/\alpha & 2 & \sqrt{5} & \sqrt{5}, 3 & 3  & 3  & 3     & 3,5 & 5  & 5  & 5  & 5  & 5  & 5  & 5   & 5   & 5\\
\end{array}}\]\label{xyz}
\begingroup
\captionof{table}{The maximum number of equiangular lines for $d\leq 41$.}%
\endgroup
We remark that there exist several incorrectly revised tables in the current literature (see Remark~\ref{absnewrem}), which might suggest to the uninitiated that $N(d)$ is known for small $d$. Table~\ref{xyz} shows that, despite a considerable amount of research in the past $40$ years, determining $N(d)$ even for relatively small values of $d$ is still out of reach. The methods used to obtain configurations with the above indicated number of lines are discussed throughout the scattered literature \cite{LS, HIG, T, TTH, JCT}, while the upper bounds will be mentioned later. The reader may wish to jump ahead to Section~\ref{subs3}, where the new results regarding dimensions $d=14$ and $d=16$ are presented.
\begin{rem}\label{absnewrem}
Seidel seems to claim in \cite[Section 3.3]{HIG} that the lower bounds indicated in Table~\ref{xyz} above cannot be improved unless $d=19$ or $20$. This might be true, but it is unclear whether or not his statement follows implicitly from the cited literature.
\end{rem}
The Gram matrix of the equiangular line system $\left[G\right]_{i,j}:=\left\langle v_i,v_j\right\rangle$, $1\leq i,j\leq n$, is of fundamental interest. It contains all the relevant parameters of the line system and thus the study of equiangular lines using matrix theoretical and linear algebraic tools is possible. We find it, however, more convenient to consider the \emph{Seidel matrix} $S:=(G-I)/\alpha$ instead, which is a symmetric matrix with zero diagonal and $\pm1$ entries otherwise. The multiplicity of the smallest eigenvalue $\lambda_{0}=-1/\alpha$ of $S$ describes the smallest possible dimension $d$ such that the line system can be embedded into $\mathbb{R}^d$ (see \cite{LS} for more details). Seidel matrices are the central objects of this paper.

The outline of this paper is as follows. In Section $2$ we provide an overview of the subject and present both classical and recent results on fundamental properties of equiangular lines. The main result of this part is an improved general quadratic lower bound on the maximum number of equiangular lines (cf.\ \cite{DC}). In Section~$3$ we prove new modular identities involving the determinant and the permanent of Seidel matrices. In Section~$4$ we report on our computational results: we present a full classification of Seidel matrices up to order $12$ (cf.\ \cite{BMS, MK, TS}). In Section~$5$ we investigate Seidel matrices with three distinct eigenvalues, and we give new upper bounds for $N(d)$ for several values of $d$. In Section~$6$ we discuss equiangular line systems with common angle $1/5$. During the course of the paper we extend (and occasionally correct) several tables from the literature.

\section{Large sets of equiangular lines}
In this section we recall some fundamental structural results on equiangular lines, and discuss several lower and upper bounds on $N(d)$. The main contribution here is Corollary~\ref{newthm}, which extends an earlier construction of de Caen (see the discussion preceding Theorem~\ref{larges}) and describes a new general lower bound for $N(d)$.

The first result is the \emph{absolute bound}.
\begin{theorem}[Gerzon, see \cite{LS}]\label{absbd}
For every $d\geq 2$ we have
\beql{absolute}
N(d)\leq\frac{d(d+1)}{2},
\eeq
and if equality holds then $d=2$, $3$, or $d+2$ is the square of an odd integer.
\end{theorem}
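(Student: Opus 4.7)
The plan is to embed each line into the real vector space $\mathrm{Sym}^d(\mathbb{R})$ of symmetric $d\times d$ matrices, which has dimension $d(d+1)/2$. To each unit vector $v_i$ I associate the rank-one orthogonal projection $P_i := v_iv_i^T$. A direct Frobenius-product computation gives
\[
\langle P_i,P_j\rangle_F = \mathrm{tr}(P_iP_j) = \mathrm{tr}(v_iv_i^Tv_jv_j^T) = \langle v_i,v_j\rangle^2,
\]
so the Gram matrix of $P_1,\dots,P_n$ equals $(1-\alpha^2)I_n + \alpha^2 J_n$. Since we are in the nontrivial regime $n>d$ (so $0<\alpha<1$), this matrix is positive definite; hence the $P_i$ are linearly independent in $\mathrm{Sym}^d(\mathbb{R})$, and dimension-counting gives $n\leq d(d+1)/2$.

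For the equality case $n=d(d+1)/2$, the family $\{P_i\}$ is an $\mathbb{R}$-basis of $\mathrm{Sym}^d(\mathbb{R})$, so the identity can be written uniquely as $I_d = \sum_i c_i P_i$. Taking Frobenius products with $P_j$ and using the explicit Gram matrix from above, I expect to conclude that the $c_i$ share a common value $c$; comparing traces then forces $c = d/n$, and substituting back yields
\[
\alpha^2 = \frac{n-d}{d(n-1)}.
\]
Plugging in $n=d(d+1)/2$ simplifies this to $\alpha^{-2} = d+2$, so $1/\alpha=\sqrt{d+2}$.

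To finish, I would separate two regimes. When $d\in\{2,3\}$ the bound $n=d(d+1)/2$ is at most $2d$, no arithmetic obstruction arises, and both cases are realised concretely. When $d\geq 4$ one has $n=d(d+1)/2>2d$, and I would invoke the classical integrality result (proved via algebraic integrality of the eigenvalues of the Seidel matrix $S=(G-I)/\alpha$, since $S$ has integer entries and $1/\alpha$ is either an eigenvalue or forces rationality via spectral relations) stating that in this regime $1/\alpha$ must be an odd integer. Combined with $1/\alpha=\sqrt{d+2}$, this forces $d+2$ to be the square of an odd integer. The genuinely nontrivial ingredient is this last integrality/parity assertion, which is the only step that uses more than elementary linear algebra; everything else is bookkeeping with the Gram matrix of the $P_i$.
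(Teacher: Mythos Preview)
Your argument is correct and is the classical proof of the absolute bound; the paper itself does not prove Theorem~\ref{absbd} but attributes it to Gerzon and cites \cite{LS}. The one external ingredient you need---that $n>2d$ forces $1/\alpha$ to be an odd integer---is precisely Theorem~\ref{s2t2}(b) in the paper, so your proof is self-contained relative to the paper's stated results.
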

Equality indeed holds for $d=2$, $3$, $7$, and $23$. For long it was conjectured that the upper bound \eqref{absolute} is attained whenever $d+2$ is an odd square, until Makhnev, in a surprising breakthrough paper \cite{MAK}, proved that $N(47)<24\cdot47$. Additional counterexamples were found subsequently by a different method \cite{BMV}; and recently, by employing semidefinite programming, the upper bound \eqref{absolute} for $N(d)$ was considerably improved for various dimensions $d\leq136$ \cite{ABARG}. It is worthwhile to note that these are three fundamentally different approaches.

It turns out that the common angle $\alpha$ is heavily restricted when $n>2d$.% We recall the following result.
\begin{theorem}[Neumann, see \cite{LS}]\label{s2t2}
Assume that there exist $n$ equiangular lines in $\mathbb{R}^d$ with common angle $\alpha$.
\begin{enumerate}[a$)$]
\item If the associated Seidel matrix has an even eigenvalue, then it has multiplicity $1$.
\item If $n>2d$, then $1/\alpha$ is an odd integer.
\end{enumerate}
\end{theorem}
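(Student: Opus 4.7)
The plan is to establish (a) by reducing $S$ modulo $2$, and then derive (b) by combining (a) with a Galois-theoretic argument applied to the smallest eigenvalue.

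For (a), I would first observe that the off-diagonal entries of $S$ are $\pm 1 \equiv 1 \pmod 2$ while the diagonal is $0$, so every entry of $S - (J - I)$ is even. Hence the characteristic polynomials agree modulo $2$:
\[
\chi_S(\lambda) \equiv \chi_{J-I}(\lambda) = (\lambda - (n-1))(\lambda + 1)^{n-1} \pmod 2.
\]
This simplifies to $(\lambda+1)^n$ if $n$ is even and to $\lambda(\lambda+1)^{n-1}$ if $n$ is odd; in either case $\lambda$ divides $\chi_S(\lambda) \pmod 2$ with multiplicity at most $1$. Now if $\mu_1, \dots, \mu_k$ are the distinct even integer eigenvalues of $S$ with multiplicities $m_1, \dots, m_k$, then $\prod_i (\lambda - \mu_i)^{m_i}$ divides $\chi_S(\lambda)$ in $\mathbb{Z}[\lambda]$, and reducing modulo $2$ yields $\lambda^{m_1 + \cdots + m_k} \mid \chi_S(\lambda) \pmod 2$. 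This forces $m_1 + \cdots + m_k \le 1$, so at most one even eigenvalue exists and it has multiplicity exactly $1$.

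For (b), recall that $\lambda_0 = -1/\alpha$ is an eigenvalue of $S$ of multiplicity $n - d$, since the Gram matrix $G = I + \alpha S$ has rank $d$. I would first show that $\lambda_0 \in \mathbb{Z}$: because $\chi_S(\lambda) \in \mathbb{Z}[\lambda]$, each Galois conjugate of $\lambda_0$ is itself an eigenvalue of $S$ with the same multiplicity $n - d$. If $\lambda_0$ were irrational, there would be at least one conjugate distinct from $\lambda_0$, giving total multiplicity $\ge 2(n - d) > n$ (using $n > 2d$), a contradiction. Hence $\lambda_0$ is rational, and being an algebraic integer, lies in $\mathbb{Z}$. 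Then $n - d > d \ge 1$ ensures the multiplicity of $\lambda_0$ is at least $2$, so by (a), $\lambda_0$ is not even; it is therefore an odd integer, and $1/\alpha = -\lambda_0$ is a positive odd integer.

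The main conceptual step, and thus the main obstacle in discovering the proof, is noticing the fortunate congruence $S \equiv J - I \pmod 2$, which gives an explicit normal form for $\chi_S(\lambda) \pmod 2$. Once this is exploited, the divisibility argument in (a) is immediate, and pairing it with the elementary Galois observation in (b) is routine.
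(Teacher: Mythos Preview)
Your proof is correct. Note, however, that the paper does not actually prove Theorem~\ref{s2t2}: it is stated with attribution to Neumann and a citation to \cite{LS}, with no proof given. So there is no ``paper's own proof'' to compare against.

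That said, your argument is essentially the classical one. The reduction $S \equiv J-I \pmod 2$ is exactly the standard trick, and your factorisation of $\chi_{J-I}$ modulo $2$ is clean. One small remark: in part~(a) you implicitly assume the ``even eigenvalue'' is an integer, which is how the statement must be read (a non-integer eigenvalue cannot be even), so this is fine. In part~(b), the inequality $2(n-d) > n$ follows immediately from $n > 2d$, so the Galois-conjugate count works; and once $\lambda_0 \in \mathbb{Z}$ with multiplicity $n-d \ge 2$, part~(a) finishes it. Everything is in order.
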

Theorem~\ref{s2t2} is particularly useful for deciding the maximum number of equiangular lines in $\mathbb{R}^d$ for small $d$. The result above can be combined with the following improved upper bound, called the \emph{relative bound}, which takes into consideration the common angle under some assumptions on the ambient dimension $d$.
\begin{proposition}[cf.~\mbox{\cite[Lemma~6.1]{vLS}}]\label{s2p23}
Assume that there exist $n$ equiangular lines in $\mathbb{R}^d$ with common angle $\alpha\leq1/\sqrt{d+2}$. Then $n\leq d(1-\alpha^2)/(1-d\alpha^2)$, and equality holds if and only if the corresponding Seidel matrix has exactly two distinct eigenvalues.
\end{proposition}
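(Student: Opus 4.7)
The plan is to extract a spectral inequality from the Seidel matrix $S$. Writing $G = I + \alpha S$ for the Gram matrix of unit representatives of the lines, $G$ is positive semidefinite of rank at most $d$, so $-1/\alpha$ is an eigenvalue of $S$ of some multiplicity $m \geq n-d$. Let $\mu_1, \ldots, \mu_{n-m}$ denote the remaining eigenvalues. Two elementary trace identities drive the argument: since $S$ has zero diagonal, $\operatorname{tr}(S) = 0$ yields $\sum_i \mu_i = m/\alpha$; and since $S$ has $\pm 1$ off-diagonal entries, $\operatorname{tr}(S^2) = n(n-1)$ yields $\sum_i \mu_i^2 = n(n-1) - m/\alpha^2$.

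The main step is the Cauchy--Schwarz inequality
\[
\Bigl(\sum_i \mu_i\Bigr)^2 \;\leq\; (n-m)\sum_i \mu_i^2.
\]
Substituting the identities above and simplifying rearranges to $m\bigl(1 + (n-1)\alpha^2\bigr) \leq n(n-1)\alpha^2$. Inserting the lower bound $m \geq n-d$ and using that the hypothesis $\alpha \leq 1/\sqrt{d+2}$ guarantees $1 - d\alpha^2 \geq 2/(d+2) > 0$, routine algebra rearranges the resulting inequality to the claimed bound $n \leq d(1-\alpha^2)/(1 - d\alpha^2)$.

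For the equality statement, tracing through forces both $m = n-d$ and equality in Cauchy--Schwarz. The latter makes $\mu_1 = \cdots = \mu_{n-m}$, so $S$ has at most two distinct eigenvalues; since $\sum_i \mu_i = m/\alpha > 0$, the common value of the $\mu_i$ is nonzero, so $S$ has exactly two distinct eigenvalues. Conversely, if $S$ has exactly two distinct eigenvalues, then each step reverses, with the convention (stated in the paragraph preceding Theorem~\ref{absbd}) that $d$ equals the multiplicity of $-1/\alpha$, i.e.\ $m = n-d$. I expect no serious obstacle; the only nuance is this bookkeeping in the equality case, and noting that the map $d' \mapsto d'(1-\alpha^2)/(1-d'\alpha^2)$ is strictly increasing, so $m > n-d$ would automatically produce a strict inequality.
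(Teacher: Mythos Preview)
Your proof is correct and follows the standard spectral/Cauchy--Schwarz argument from \cite{vLS}; the paper itself does not supply a proof for Proposition~\ref{s2p23}, but the identical method appears in its proof of the generalization Theorem~\ref{3g}. One small slip: in the converse you write ``$d$ equals the multiplicity of $-1/\alpha$'' but then conclude $m=n-d$; you mean that $n-d$ equals that multiplicity (equivalently, that the lines span $\mathbb{R}^d$), which is indeed the paper's standing convention.
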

\begin{rem}
The relative bound, described by Proposition~\ref{s2p23}, does not exceed the absolute bound, given by Theorem~\ref{absbd}.
\end{rem} 
We provide a slight generalization of Proposition~\ref{s2p23} in Section~\ref{XYZZ} (see Theorem~\ref{3g}), and investigate the case of ``almost'' equality in Theorem~\ref{maintnon}. For other bounds, similar in spirit, see \cite{OKUDA}.

Now we turn to the discussion of lower bounds. It is easy to show that $N(d)\geq cd\sqrt{d}$ for some $c>0$ (see \cite{LS}), but for many years the asymptotic behavior of $N(d)$ was unknown (see \cite[p.~ 884]{HIG}). De Caen \cite{DC} was the first to show that $N(d)\geq 2(d+1)^2/9$ for infinitely many values of $d$. His results imply a general quadratic lower bound, with a smaller leading coefficient, namely $N(d)\geq (d+2)^2/72$ for every $d\geq 1$, see \cite[pp.~79--80]{FSZ}. We analyze de Caen's construction and improve upon the previously mentioned general lower bound by a factor of $2$.% The following is the main result of this section.%
\begin{theorem}\label{larges}
For each $i\geq 1$ and $m=4^i$ there exists an equiangular set of
\begin{enumerate}[$($a$)$]
\item $m(m/2+1)$ lines in $\mathbb{R}^{3m/2+1}$$;$ and
\item $m(m/2+1)-1$ lines in $\mathbb{R}^{3m/2}$$;$ and
\item $mj$ lines in $\mathbb{R}^{m+j-1}$, for every $j$ satisfying $1\leq j\leq m/2$.
\end{enumerate}
In each of the three cases the common angle is $\alpha=1/(2^i+1)$.
\end{theorem}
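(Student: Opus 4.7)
Write $q = 2^i$, so that $m = q^2$ and the target common angle is $\alpha = 1/(q+1)$. In each of the three cases I must produce a Seidel matrix $S$ (symmetric, zero diagonal, $\pm 1$ off-diagonal) whose smallest eigenvalue is exactly $-1/\alpha = -(q+1)$ with multiplicity $n-d$; the equiangular line system is then read off from the Gram matrix $G = I + \alpha S$, which is positive semidefinite of rank $d$. My plan is not to treat the three cases independently: I would build case (c) at its maximal parameter $j = m/2$ first, then obtain the smaller $j$ by removing $m$-tuples of lines one ``parallel class'' at a time, then augment the $j = m/2$ system to reach case (a), and finally delete a single carefully chosen line to reach case (b).

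For part (c), I would use a block refinement of de Caen's quadratic-form construction. Label the $mj$ lines by pairs $(x,k)$ with $x$ ranging over a set $X$ of size $m$ and $k \in \{1, \ldots, j\}$. Define the $((x,k),(y,l))$-entry of $S$ as an ``intra-class'' pattern $A(x,y)$ when $k = l$ and an ``inter-class'' pattern $B_{k,l}(x,y)$ when $k \neq l$. A natural choice for $A$ is the off-diagonal part of a Hadamard matrix of order $m$ (which exists because $m = 4^i$), and for $B_{k,l}$ a Latin-square or affine-plane type translate of $A$ indexed by $(k,l)$. The spectrum of $S$ should then decompose as a tensor product of the spectrum of $A$ with that of a $j \times j$ ``pattern matrix''; the target is to force the minimum eigenvalue of $S$ to equal $-(q+1)$ with multiplicity $(m-1)(j-1)$, so that the rank of $G$ is $mj - (m-1)(j-1) = m+j-1$ as required.

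For part (a), I plan to enlarge the case-(c) Seidel matrix at $j = m/2$ by $m$ additional rows and columns, encoding an ``$(m/2+1)$-th parallel class'' together with one auxiliary vector and drawing the new $\pm 1$ entries from the same affine framework. The point is that the structured perturbation preserves the value of the minimum eigenvalue $-(q+1)$ while raising the rank of $G$ by exactly $2$, to $3m/2 + 1$. Part (b) follows by deleting the single ``auxiliary'' line: its corresponding row of $S$ should lie in the complement of the $-(q+1)$-eigenspace, so that removal lowers the rank of $G$ by exactly $1$, yielding $m^2/2 + m - 1$ lines in $\mathbb{R}^{3m/2}$.

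The main obstacle is the eigenvalue bookkeeping: one must show that the assembled $S$ has smallest eigenvalue \emph{exactly} $-(q+1)$ (nothing smaller, which would obstruct embedding in $\mathbb{R}^d$; nothing larger, which would force a different common angle) and that the multiplicity is correct. The tensor-product decomposition coming from the Hadamard matrix and the symmetries of de Caen's quadratic form are what make this calculation tractable, reducing it to a diagonalization of small blocks. A secondary subtlety is verifying the precise rank jumps: that passing from case (c) with $j=m/2$ to case (a) adds \emph{two} new directions (not one and not three), and that the line removed in case (b) drops the dimension by \emph{exactly} one. Both of these pin down the structural role of the ``auxiliary'' vector in case (a) and justify why the three parts of the theorem knit together into a single family.
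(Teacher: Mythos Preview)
What you have written is a plan, not a proof: every step that actually requires work---the spectral decomposition of your block Seidel matrix, the verification that the smallest eigenvalue is exactly $-(q+1)$ with the correct multiplicity, the rank jumps when you augment or delete---is stated as an intention (``should then decompose'', ``the target is to force'') and never carried out. You yourself flag the eigenvalue bookkeeping as ``the main obstacle'' and then leave it unresolved. There is also a muddle in your description of part~(a): adding $m$ lines to the $j=m/2$ system is exactly one more parallel class, with no separate ``auxiliary vector''; the dimension goes up by two because the extra class breaks two linear relations, not because of an extra gadget.

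The paper's route is quite different and sidesteps your obstacle entirely. Rather than build a Seidel matrix and analyse its spectrum, it constructs the unit vectors themselves as the columns of an explicit $(m+j+1)\times m(j+1)$ block matrix
\[
M(j)=\begin{bmatrix} bI & bB_1 & \cdots & bB_j \\[2pt] \multicolumn{4}{c}{a\,I_{j+1}\otimes e_m} \end{bmatrix},
\]
where $B_0=I,B_1,\ldots,B_{m/2}$ is a complete set of real mutually unbiased bases in $\mathbb{R}^m$ (these exist for $m=4^i$) and $a,b$ are chosen so that each column is a unit vector. Equiangularity with angle $1/(q+1)$ is then a one-line inner-product computation using the MUB condition $|\langle\text{col of }B_k,\text{col of }B_l\rangle|=1/\sqrt{m}$; no eigenvalues are ever computed. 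Part~(a) is the full $M(m/2)$; part~(b) drops its first column and exhibits one explicit left-nullspace vector; part~(c) drops the whole $B_0=I$ block from $M(j)$ and exhibits two explicit left-nullspace vectors. Thus the dimension counts are certified by writing down concrete linear dependencies among the rows, not by tracking eigenvalue multiplicities. If you want to salvage your Seidel-matrix approach you would, in effect, have to rediscover the MUB structure hidden inside de~Caen's quadratic forms; it is far cleaner to start from the MUBs directly.
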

The proof is based on the existence of a complete set of $m/2+1$ pairwise \emph{mutually unbiased bases} in $\mathbb{R}^m$, whenever $m=4^i$ for some $i\geq 1$. For a gentle introduction see \cite{MATE}; for more details see Kantor's work\footnote{Note that the quadratic form $x_1x_3+x_1x_4+x_2x_3+x_2x_4$ should be replaced by $x_1x_3+x_1x_4+x_2x_3+x_3x_4$ in the relevant example of \cite[p.~157]{WK}.} \cite[p.~157]{WK}, and \cite{CCKS, WKxd}. We remark here that the case $j=m/2$ in part~(c) gives exactly the same number of lines in $\mathbb{R}^{3m/2-1}$ as de Caen's construction \cite{DC}.
\begin{proof}
Let $i\geq 1$ and consider a complete set of real mutually unbiased bases in $\mathbb{R}^m$, where $m=4^i$, i.e., $m/2+1$ orthonormal bases represented by orthogonal matrices $B_0:=I$, $\hdots$, $B_{m/2}$. On the one hand, every entry of $B_j^TB_k$ is $\pm1/\sqrt{m}$, where $0\leq j<k\leq m/2$. On the other hand, we may assume that the first row of each of the matrices $B_j$, $1\leq j\leq m/2$ is normalized to be $e_m/\sqrt{m}$, where $e_j$ denotes a row vector of length $j$ having all entries $1$.  Having these matrices at our disposal, we build a block matrix whose columns will form the desired configuration of equiangular lines. Let $a=1/\sqrt{\sqrt{m}+1}$ and $b=a\sqrt[4]{m}$. Consider
\[M(j):=\left[\begin{array}{ccccc}
bI & bB_1 & \dots & bB_{j-1} & bB_{j}\\
&&aI_{j+1}\otimes e_m &&
\end{array}\right],\quad 1\leq j\leq m/2.\]
Part (a) follows from $M(m/2)$. Part (b) follows by discarding the first column of $M(m/2)$, and observing that $\left[\begin{array}{ccc}-b&0\otimes e_{m}& a\otimes e_{m/2}\end{array}\right]$ is in its left nullspace. Therefore the dimension of the column space is at most $3m/2$. Part (c) follows by discarding the first $m$ columns of $M(j)$, and observing that $\left[\begin{array}{ccc}-b & 0\otimes e_{m}& a\otimes e_{j}\end{array}\right]$ and $\left[\begin{array}{ccc} 0\otimes e_{m}& 1 & 0\otimes e_{j}\end{array}\right]$ are in its left nullspace.
\end{proof}
\begin{rem}
To maximize the number of equiangular lines in a given dimension $d$ the choice of the angle $\alpha$ is crucial. Indeed, in $\mathbb{R}^{95}$ we have $n\leq 438$ for $\alpha\leq 1/11$ by Proposition~\ref{s2p23}. However, for $\alpha=1/9$, for which the previously mentioned bound does not apply, we can construct $2048$ lines via part~(c) of Theorem~\ref{larges}. On the other hand, for $\alpha=1/3$ the maximum number of equiangular lines is just $188$ (see \cite{LS}). Therefore, in general, using an angle that is too small or too large does not give rise to a maximum number of equiangular lines.
\end{rem}
We note some consequences.
\begin{corollary}\label{newthm}
Let $d\geq 25$ and let $m=4^i$ with $i\geq 2$ be the unique power of $4$ for which $3m/2+1\leq d\leq 6m$. Then
\[N(d)\geq\begin{cases}
m(m/2+1)\qquad &  3m/2+1\leq d\leq 33m/8-1;\\
4m(d-4m+1)\qquad\text{for}\qquad & 33m/8\leq d\leq 6m-1;\\
4m(2m+1)-1\qquad & d=6m.\\
\end{cases}\]
\end{corollary}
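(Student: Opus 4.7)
The plan is to combine two applications of Theorem~\ref{larges}---one with the parameter $m=4^i$ from the hypothesis, and one with the next power $m':=4m=4^{i+1}$---and in each subrange to take whichever bound is larger. The only auxiliary ingredient is the trivial monotonicity $N(d_1)\leq N(d_2)$ for $d_1\leq d_2$, which follows because any equiangular configuration in $\mathbb{R}^{d_1}$ embeds isometrically in $\mathbb{R}^{d_2}$ by padding with zero coordinates.

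First, Theorem~\ref{larges}(a) applied with $m=4^i$ produces $m(m/2+1)$ equiangular lines in $\mathbb{R}^{3m/2+1}$, so monotonicity yields $N(d)\geq m(m/2+1)$ throughout the entire window $3m/2+1\leq d\leq 6m$. Next, Theorem~\ref{larges}(c) applied at the higher level, with $m'=4m$ and index $j:=d-4m+1$, places $m'j=4m(d-4m+1)$ lines in $\mathbb{R}^{m'+j-1}=\mathbb{R}^d$ exactly when the constraint $1\leq j\leq m'/2=2m$ holds---that is, for $4m\leq d\leq 6m-1$. Finally, for the right endpoint $d=6m$, Theorem~\ref{larges}(b) with $m'=4m$ delivers $m'(m'/2+1)-1=4m(2m+1)-1$ lines in $\mathbb{R}^{3m'/2}=\mathbb{R}^{6m}$, which comfortably beats the $4m\cdot 2m=8m^2$ that monotonicity inherits from the $d=6m-1$ instance of part~(c).

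It remains to decide, for $d$ in the overlap of the first two regimes, which bound is larger. Solving $m(m/2+1)=4m(d-4m+1)$ gives the real crossover $d=33m/8-3/4$. The hypothesis $d\geq 25$ forces $i\geq 2$ (the window $[7,24]$ of $m=4$ already contains every smaller admissible $d$), so $m$ is divisible by $8$ and $33m/8$ is an integer; hence for integer $d$ the bound $m(m/2+1)$ dominates precisely when $d\leq 33m/8-1$, and $4m(d-4m+1)$ takes over on $33m/8\leq d\leq 6m-1$. Together with the endpoint case this reproduces the three cases in the statement. The whole argument is essentially bookkeeping on top of Theorem~\ref{larges}; the only place where one has to be careful is confirming that the index $j$ remains admissible throughout the second regime and that the integer crossover lines up exactly with the threshold $33m/8$---both of which rely on the divisibility $8\mid m$ secured by the condition $d\geq 25$.
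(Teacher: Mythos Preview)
Your argument is correct and follows the same route as the paper: part~(a) of Theorem~\ref{larges} with $m=4^i$ for the first range, part~(c) with $m'=4m$ for the second, and part~(b) with $m'=4m$ for the endpoint $d=6m$. You supply more detail than the paper (the explicit crossover calculation and the appeal to monotonicity), but the structure and the key inputs are identical.
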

\begin{proof}
The result is an immediate consequence of Theorem~\ref{larges}. The first case follows from part~(a) for $i\geq 2$. The second case follows from part~(c) by replacing $m$ with $4m$. Indeed, by using $j$ mutually unbiased bases of size $4m$, where $m/8+1\leq j\leq 2m$, we have $4m j\geq 4m(m/8+1)>m(m/2+1)$ lines in dimension $d=4m+j-1\geq 33m/8$. The third case follows from part~(b) by replacing $m$ with $4m$.
\end{proof}
We remark that de Caen constructs equiangular lines for $d=6m-1$, ($m=4^i$ for some $i\geq 0$), and his construction gives the same lower bound as indicated by Corollary~\ref{newthm}.%does not give an improvement. However, it does give better bounds for all other cases.%
\begin{corollary}[cf.\ \cite{DC} and \mbox{\cite[pp.~79--80]{FSZ}}]\label{s2t1}
For every $d\geq2$ we have
\[N(d)\geq \left\lceil\frac{1}{1089}\left(32d^2+328d+296\right)\right\rceil.\]
\end{corollary}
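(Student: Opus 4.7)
The plan is to apply Corollary~\ref{newthm} for $d \geq 25$ and verify the small cases by inspection. The coefficients $32$, $328$, $296$, and $1089 = 33^2$ are calibrated so that the desired inequality becomes sharp at the upper endpoint of the first range of Corollary~\ref{newthm}. Indeed, setting $d = 33m/8 - 1$ and hence $m = 8(d+1)/33$, direct substitution would give
\[
1089\cdot m(m/2 + 1) \;=\; \tfrac{1089}{2}m^2 + 1089\, m \;=\; 32 d^2 + 328 d + 296.
\]
This single algebraic identity is what drives the whole argument, since it reverse-engineers the coefficients of the claimed quadratic from the bound of Corollary~\ref{newthm}(a).

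For $2 \leq d \leq 24$ the right-hand side is bounded above by $25$, whereas Table~\ref{xyz} records vastly larger values of $N(d)$; these cases are therefore immediate. For $d \geq 25$ I would let $m = 4^i$ with $i \geq 2$ be the unique power of $4$ with $3m/2 + 1 \leq d \leq 6m$ and proceed through the three cases of Corollary~\ref{newthm}. In the first range $3m/2 + 1 \leq d \leq 33m/8 - 1$, the bound $N(d) \geq m(m/2+1)$ is constant in $d$ while the right-hand side is increasing, so the identity above delivers equality at $d = 33m/8 - 1$ and strict inequality below it. In the second range $33m/8 \leq d \leq 6m - 1$, we have $N(d) \geq 4m(d - 4m + 1)$, so the difference $1089\,N(d) - (32d^2 + 328d + 296)$ is a concave-down quadratic in $d$ (the only $d^2$-term is $-32d^2$); its minimum on the interval therefore occurs at an endpoint, and a short computation at $d = 33m/8$ and at $d = 6m - 1$ produces manifestly positive expressions (linear and quadratic in $m$, respectively, with large positive leading coefficients for $m \geq 16$). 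The isolated value $d = 6m$ is handled by direct substitution into $N(d) \geq 4m(2m+1) - 1$.

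There is essentially no conceptual obstacle; the only thing to spot is the identity above, which determines the coefficients uniquely. After that, every case other than the first gives strict slack by a comfortable margin, and the verification reduces to a handful of elementary inequalities in $m$.
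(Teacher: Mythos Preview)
Your argument is correct and follows essentially the same route as the paper: verify small $d$ by inspection, then for $d\ge 25$ invoke Corollary~\ref{newthm} and use the identity $1089\cdot m(m/2+1)=32d^2+328d+296$ at $d=33m/8-1$ as the anchor. The only cosmetic difference is that the paper packages your Case~2 and Case~3 endpoint checks into a single sentence (``the convexity of $f$ yields the desired result''), whereas you spell out the concave-down difference and evaluate at both endpoints; these are the same computation viewed from two sides.
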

\begin{proof}
Let $f(d):=(32d^2+328d+296)/1089$. For $2\leq d\leq 22$ the statement is trivial since $f(d)<d$. For $23\leq d\leq 24$ we have $f(d)<276$ and hence the $276$ lines in $\mathbb{R}^{23}$ (see Table \ref{xyz}) satisfy the claim. Therefore we can assume that $d\geq 25$ and so invoke Corollary~\ref{newthm}. We readily see that $f$ is the quadratic curve fit to the set of points $\{(33m/8-1,m(m/2+1))\colon m=4^i, i\geq 2\}\subset\mathbb{R}\times\mathbb{R}$. By noting that $N(33m/8)\geq 4m(m/8+1)>f(33m/8)$ for $m=4^i$, $i\geq 2$, the convexity of $f$ yields the desired result.
\end{proof}
It would be interesting to see whether the gap between the constants $32/1089$ and $1/2$ (cf.\ Theorem~\ref{absbd}) could be further reduced. This could be achieved by either obtaining better constructions resulting in larger lower bounds on $N(d)$; or by showing the non-existence of certain configurations, obtaining smaller upper bounds on $N(d)$.%%
%%%%%%%%%
\section{Some structural results on Seidel matrices}
The \emph{switching class} of a Seidel matrix $S$ is the set of all Seidel matrices $PDSDP^T$ where $P$ is a permutation matrix and $D$ is a $\pm1$ diagonal matrix. Two Seidel matrices, $S_1$ and $S_2$, are called \emph{switching equivalent}, if $S_2=PDS_1DP^T$ holds for some $P$ and $D$. This equivalence captures the symmetries of the equiangular line systems. Indeed, the described operations correspond to relabeling the spanning unit vectors and replacing some of them with their negatives. It is customary to associate an \emph{ambient graph} $\Gamma$ to a Seidel matrix $S$, defined via its adjacency matrix $A:=(J-S-I)/2$, where $J$ is the matrix with all entries $1$. In this terminology, by switching with respect to a vertex $v$ in $\Gamma$, we obtain the graph $\Gamma'$ in which the neighbors of $v$ in $\Gamma'$ are its nonneighbors in $\Gamma$ and \emph{vice versa}.

In this section we state some new structural results on Seidel matrices. To motivate our efforts we begin this part with a result about the determinant of a Seidel matrix whose ambient graph is regular.
\begin{lemma}\label{triww2}
Let $\Gamma$ be a connected $k$-regular graph on $n$ vertices with adjacency matrix $A$. Assume that $A$ has exactly $r$ distinct eigenvalues. Then $S:=J-2A-I$ has at most $r$ distinct eigenvalues.
\end{lemma}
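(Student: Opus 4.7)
The plan is to exploit the fact that for a $k$-regular graph, the adjacency matrix $A$ and the all-ones matrix $J$ commute, and use a simultaneous eigenspace decomposition.

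First I would observe that $k$-regularity gives $AJ = JA = kJ$, so $A$, $J$, and $I$ pairwise commute. Consequently they are simultaneously diagonalizable. The all-ones vector $\mathbf{1}$ is a common eigenvector: $A\mathbf{1} = k\mathbf{1}$ and $J\mathbf{1} = n\mathbf{1}$, so
\[
S\mathbf{1} = (J - 2A - I)\mathbf{1} = (n - 2k - 1)\mathbf{1}.
\]
The orthogonal complement $\mathbf{1}^\perp$ is invariant under each of $A$, $J$, and $I$; moreover $J$ acts as the zero operator on $\mathbf{1}^\perp$, since every vector there has entries summing to zero. Hence on $\mathbf{1}^\perp$ we have $S = -2A - I$, so the eigenvalues of $S$ on $\mathbf{1}^\perp$ are precisely $\{-2\lambda - 1 : \lambda \in \operatorname{spec}(A|_{\mathbf{1}^\perp})\}$.

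Next I would invoke connectedness of $\Gamma$: by Perron--Frobenius the eigenvalue $k$ of $A$ has multiplicity exactly $1$, so $A|_{\mathbf{1}^\perp}$ has precisely $r - 1$ distinct eigenvalues (the ones different from $k$). This yields $r - 1$ distinct eigenvalues of $S$ on $\mathbf{1}^\perp$, and at most one additional eigenvalue $n - 2k - 1$ arising from the $\mathbf{1}$-direction. Therefore $S$ has at most $r$ distinct eigenvalues, as claimed.

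There is no serious obstacle; the only thing worth double-checking is that the eigenvalue $n - 2k - 1$ on the $\mathbf{1}$-eigenspace might coincide with one of the $-2\lambda - 1$ values, but this only decreases the eigenvalue count and is consistent with the ``at most $r$'' conclusion.
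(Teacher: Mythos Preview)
Your proof is correct and follows essentially the same approach as the paper's: both identify the eigenvalues of $S$ as $n-2k-1$ (from the $\mathbf{1}$-direction) together with $-2\lambda_i-1$ for the remaining $r-1$ eigenvalues $\lambda_i$ of $A$. Your version simply spells out the simultaneous diagonalization and the use of Perron--Frobenius that the paper leaves implicit in its one-line proof.
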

\begin{proof}
If the eigenvalues of $A$ are $k$, and $\lambda_1<\dots<\lambda_{r-1}$, then the eigenvalues of $S$ are $n-2k-1$, and $-2\lambda_i-1$ for all $1\leq i\leq r-1$.
\end{proof}
\begin{lemma}\label{S3L2}
Let $\Gamma$ be a connected $k$-regular graph on $n$ vertices with $e$ edges and let $A$ be its adjacency matrix. Then $\mathrm{det}\left(J-2A-I\right)\equiv (-1)^n(1-n)+4en\ (\mathrm{mod}\ 8)$.
\end{lemma}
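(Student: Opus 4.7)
The plan is to use $k$-regularity to extract the eigenvalue of $S$ on the all-ones vector $\mathbf{1}$, reducing the problem to controlling $\det(I+2A)$ modulo~$8$, which in turn is governed by the traces of $A$ and $A^{2}$.

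First I would imitate the reasoning in the proof of Lemma~\ref{triww2}: the vector $\mathbf{1}$ is an eigenvector of $S$ with eigenvalue $n - 2k - 1$, and every $A$-eigenvector orthogonal to $\mathbf{1}$ is killed by $J$, hence is an $S$-eigenvector with eigenvalue $-2\lambda - 1$. Since $\Gamma$ is connected, $k$ is a simple eigenvalue of $A$, so $D := \det(I + 2A)/(2k+1)$ is a rational integer and
\[
\det(S) = (-1)^{n}(2k + 1 - n)\, D.
\]

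Next I would expand
\[
\det(I + 2A) = \prod_{i=1}^{n}(1 + 2\lambda_i) = \sum_{j=0}^{n} 2^{j}\, e_{j}(\lambda_1,\dots,\lambda_n),
\]
where the $e_j$ are the elementary symmetric polynomials in the eigenvalues of $A$. Each $e_j$ is an integer, being (up to sign) a coefficient of the characteristic polynomial of $A$, so modulo~$8$ only the terms $j = 0, 1, 2$ survive. Using $e_1 = \operatorname{tr}(A) = 0$ together with $2 e_2 = e_1^{2} - \operatorname{tr}(A^{2}) = -2e$, this gives $\det(I + 2A) \equiv 1 - 4e \pmod 8$.

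To pass from $\det(I+2A)$ to $D$, I would use that $2k+1$ is odd and hence $(2k+1)^{2} \equiv 1 \pmod 8$, so $D \equiv (2k+1)(1 - 4e) \equiv 2k + 1 - 4e \pmod 8$. Substituting this back and expanding $(2k + 1 - n)(2k + 1 - 4e) \pmod 8$, the handshake identity $nk = 2e$ collapses the cross term $2nk$ to $4e$, and the product reduces to $1 - n + 4en \pmod 8$. Finally, since $8 \mid 8en$, the factor $(-1)^{n}$ leaves the $4en$ term unchanged while flipping the $1 - n$ part, yielding $\det(S) \equiv (-1)^{n}(1 - n) + 4en \pmod 8$.

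The only real obstacle is bookkeeping: justifying that $D$ is an integer, inverting $2k + 1$ modulo~$8$, and merging the parity sign correctly with the $4en$ term. No single step is difficult; the work is in assembling them cleanly.
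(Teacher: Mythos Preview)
Your proof is correct and follows essentially the same spectral route as the paper: both factor $\det S$ as $(-1)^n(2k+1-n)\prod_{i\ge 2}(1+2\lambda_i)$ via regularity and then reduce the product modulo $8$ using $\operatorname{tr}A=0$ and $\operatorname{tr}A^2=2e$. The only cosmetic difference is that you first compute $\det(I+2A)\equiv 1-4e\pmod 8$ and then divide by $2k+1$ using $(2k+1)^2\equiv 1$, whereas the paper expands $\prod_{i\ge 2}(1+2\lambda_i)$ directly as $1-2k+4(k^2-e)$; after the final multiplication by $2k+1-n$ both collapse identically.
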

In particular, if the switching class of a Seidel matrix contains a connected regular graph then we know its determinant modulo $8$.
\begin{proof}
The proof follows from a standard spectral analysis. Let $k$ and $\lambda_2,\lambda_3,\hdots,\lambda_n$ be the eigenvalues of $A$. Note that $\mathrm{tr}A=0$ and $\mathrm{tr}A^2=nk=2e$. The eigenvalues of $S:=J-2A-I$ are $n-2k-1$ and $-2\lambda_i-1$ for $2\leq i\leq n$ from the proof of Lemma \ref{triww2}. We find
\begin{align*}
\mathrm{det}S&=(n-2k-1)\prod_{i=2}^n(-2\lambda_i-1)\\
&\equiv(-1)^n(2k+1-n)\left(1+2\sum_{i=2}^n\lambda_i+4\sum_{2\leq i<j\leq n}\lambda_i\lambda_j\right)\\
&=(-1)^n(2k+1-n)(1-2k+4(k^2-e))\\
&=(-1)^n(1-n+8k^3-16ek+4en)\equiv(-1)^n(1-n)+4en\ (\mathrm{mod}\ 8).\qedhere
\end{align*}
\end{proof}
We generalize Lemma~\ref{S3L2} and prove it for arbitrary graphs (see Theorem~\ref{S2mTH}), along with a related result on permanents. Let us recall that a \emph{derangement} is a permutation without any fixed points. It is clear from the definition that the number of derangements on $n$ elements, $\delta(n)$, is exactly $\mathrm{per}(J-I)$. By expanding this permanent along the first row we find that $\delta(n)=(n-1)(\delta(n-1)+\delta(n-2))$ for $n\geq 3$. Combining this with the initial values $\delta(1)=0$ and $\delta(2)=1$ and using induction on $n$ we obtain for $n\geq 2$ that
\beql{S2df}\delta(n)=n\delta(n-1)+(-1)^n.\eeq
\begin{rem}\label{S2Rem}
It is possible to determine $\delta(n)\ (\mathrm{mod}\ 8)$ based on equation~\eqref{S2df}. An easy inductive argument shows that $\delta(n)\equiv 1\ (\mathrm{mod}\ 8)$ for $n$ even, and $\delta(n)\equiv n-1\ (\mathrm{mod}\ 8)$ for $n$ odd. In particular $\delta(n)\equiv n-1\ (\mathrm{mod}\ 2)$.
\end{rem}
\begin{lemma}\label{S3mL}
Let $\left[S\right]_{i,j}=s_{i,j}$ be a Seidel matrix of order $n\geq2$. Let $S'$ be obtained from $S$ by changing $s_{12}=s_{21}$ to their negatives. Then, we have $\mathrm{det}S'\equiv\mathrm{det}S+4n\ (\mathrm{mod}\ 8)$.
\end{lemma}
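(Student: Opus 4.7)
The plan is to apply the Leibniz expansion and precisely identify which permutations flip sign when $s_{12}$ and $s_{21}$ are negated. Set $a := s_{12} = s_{21} \in \{\pm 1\}$. For every permutation $\sigma$, the product $\prod_i s'_{i,\sigma(i)}$ differs from $\prod_i s_{i,\sigma(i)}$ by the factor $(-1)^k$, where $k$ counts how many of the two flipped positions $(1,2)$ and $(2,1)$ are used by $\sigma$. Thus only permutations satisfying \emph{exactly one} of $\sigma(1)=2$ and $\sigma(2)=1$ contribute to $\det S' - \det S$, while those satisfying both or neither leave their contribution unchanged.

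Let $M$ and $M'$ denote the determinants of the submatrices of $S$ obtained by deleting row~$1$ and column~$2$, respectively by deleting rows~$1$,~$2$ and columns~$1$,~$2$. Cofactor expansion along the first row yields $\sum_{\sigma(1)=2}\mathrm{sgn}(\sigma)\prod_i s_{i,\sigma(i)} = -aM$. Within this sum, the sub-sum over $\sigma$ also satisfying $\sigma(2)=1$ equals $-M'$: writing $\sigma=(1\,2)\tau$ with $\tau$ fixing $\{1,2\}$, the transposition contributes a sign $-1$ and $s_{12}s_{21}=1$. Hence the sum over $\sigma(1)=2,\sigma(2)\neq 1$ is $-aM+M'$; by the symmetry $S^{\top}=S$, the sum over $\sigma(2)=1,\sigma(1)\neq 2$ equals the same value. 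Combining,
\[
\det S' - \det S \;=\; -2\bigl((-aM+M')+(-aM+M')\bigr) \;=\; 4(aM-M').
\]

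To finish I show $aM - M' \equiv n \pmod 2$. Reducing mod~$2$, $S$ becomes $J-I$, so $M' \equiv \det(J_{n-2}-I_{n-2}) = (n-3)(-1)^{n-3} \equiv n-1 \pmod 2$ (equivalently, via Remark~\ref{S2Rem} and $\det\equiv\mathrm{per}\pmod 2$). For $M$, the corresponding submatrix reduces mod~$2$ to $J_{n-1} - I_{n-1} + E_{11}$; subtracting its first row from the remaining rows yields a matrix with first column $(1,0,\ldots,0)^\top$ and bottom-right $(n-2)\times(n-2)$ block equal to $-I_{n-2}$, whence $M \equiv (-1)^{n-2} \equiv 1 \pmod 2$. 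Since $a \equiv 1 \pmod 2$, we get $aM - M' \equiv 1+(n-1) \equiv n \pmod 2$, so $\det S' - \det S \equiv 4n \pmod 8$.

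The principal obstacle is the careful case analysis and sign bookkeeping that produces the identity $\det S' - \det S = 4(aM-M')$, in particular the sign $-1$ from the transposition $(1\,2)$ in the sub-sum where $\sigma(1)=2$ and $\sigma(2)=1$, and avoiding any double counting between the $\sigma(1)=2$ and $\sigma(2)=1$ sums; once this identity is secured the mod~$2$ computation is short and elementary.
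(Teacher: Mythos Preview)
Your proof is correct. Both you and the paper start from the Leibniz expansion, observe that only permutations hitting exactly one of the positions $(1,2)$, $(2,1)$ change their contribution, and pair the two families $\{\sigma(1)=2,\sigma(2)\neq1\}$ and $\{\sigma(2)=1,\sigma(1)\neq2\}$ via $\sigma\mapsto\sigma^{-1}$ together with $S^{\top}=S$. The divergence is only in the endgame. The paper exploits the zero diagonal to restrict to derangements, identifies the relevant index set $R_n=\{\sigma:\sigma(1)=2,\ \sigma(2)\neq1,\ \sigma(i)\neq i\ (i\geq3)\}$ of size $\delta(n-1)$, and reads off the parity from Remark~3.2. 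You instead package the sum as cofactors, obtaining the exact identity $\det S'-\det S=4(aM-M')$, and then compute $M$ and $M'$ modulo $2$ by reducing $S$ to $J-I$. Your route sidesteps the derangement count entirely and yields an equality rather than a congruence before the final reduction, at the price of slightly heavier sign bookkeeping; the paper's route is shorter once the derangement machinery of Remark~3.2 is in place. Either way the argument is essentially the same.
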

We write $\mathfrak{S}_n$ to denote the symmetric group on $n$ elements.
\begin{proof}
For $n=2$ the result is immediate, therefore we can assume that $n\geq 3$. Let us define $R_n:=\{\sigma\in \mathfrak{S}_n\colon\sigma(1)=2,\sigma(2)\neq 1,\sigma(i)\neq i\text{ for }\ 3\leq i\leq n\}$ and note that its cardinality is $\delta(n-1)$. Since the inversion number obeys $I(\sigma)=I(\sigma^{-1})$ we readily find that
\[\mathrm{det}S-\mathrm{det}S'=4\sum_{\substack{\sigma\in R_n}}(-1)^{I(\sigma)}\prod_{i=1}^ns_{i,\sigma(i)},\]
and hence, since we are working modulo $8$, it is enough to determine the parity of the above sum, or equivalently, since all terms are $\pm1$, the parity of $\delta(n-1)$. By Remark~\ref{S2Rem} this is exactly the same as the parity of $n$.
\end{proof}
The following is the main contribution of this section.
\begin{theorem}[cf. \mbox{\cite[p.~ 659]{HAM}}]\label{S2mTH}
Let $\Gamma$ be a graph on $n\geq 2$ vertices with $e$ edges and let $A$ be its adjacency matrix. Then $\mathrm{det}(J-2A-I)\equiv(-1)^n(1-n)+4en\ (\mathrm{mod}\ 8)$.
\end{theorem}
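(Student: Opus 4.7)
The plan is to combine the two preceding lemmas: Lemma~\ref{S3L2} establishes the desired congruence for a single ``base'' graph, and Lemma~\ref{S3mL} lets us propagate it to any other graph by flipping off-diagonal entry pairs of the Seidel matrix one at a time.

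First I would take the complete graph $K_n$ as the starting point. It is connected and $(n-1)$-regular with $e_0 := \binom{n}{2}$ edges, so Lemma~\ref{S3L2} immediately gives
\[
\mathrm{det}(J - 2A_{K_n} - I) \equiv (-1)^n(1-n) + 4 e_0 n\ (\mathrm{mod}\ 8).
\]
(A sanity check: the Seidel matrix of $K_n$ is $I-J$ with determinant $1-n$, which one readily verifies matches the right-hand side modulo $8$.)

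Next, given an arbitrary graph $\Gamma$ on $n$ vertices with $e$ edges, observe that $\Gamma$ is obtained from $K_n$ by deleting $t := e_0 - e$ edges in some order. Each edge deletion flips exactly one pair of symmetric off-diagonal entries of the Seidel matrix; by Lemma~\ref{S3mL}, applied up to a permutation that relocates the affected pair to positions $(1,2)$ and $(2,1)$, each flip alters the determinant by $4n\ (\mathrm{mod}\ 8)$. After all $t$ flips,
\[
\mathrm{det}(J - 2A - I) \equiv (-1)^n(1-n) + 4 n (e_0 + t)\ (\mathrm{mod}\ 8).
\]
A short modular calculation using $e_0 + t = 2e_0 - e = n(n-1) - e$ then collapses the right-hand side to $(-1)^n(1-n) + 4ne\ (\mathrm{mod}\ 8)$, since $4n \cdot n(n-1)$ is divisible by $8$ (as $n$ and $n-1$ have opposite parity) and $-4ne \equiv 4ne\ (\mathrm{mod}\ 8)$.

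The main obstacle, insofar as there is one, is conceptual rather than computational: one must be convinced that Lemma~\ref{S3mL}, stated only for the $(1,2)$ entry, applies equally to the flip of any off-diagonal pair. This follows by conjugating by a suitable permutation matrix, which leaves the determinant unchanged, so it is a minor point. Apart from this observation, the argument reduces to the careful bookkeeping of the modular identities above, carried out inductively on the number of edge deletions from $K_n$.
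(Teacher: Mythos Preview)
Your argument is correct and follows the same core idea as the paper: fix a base graph whose Seidel determinant is known, then reach the target graph by repeated single-pair sign flips via Lemma~\ref{S3mL}. The only difference is the choice of base. The paper starts from the \emph{empty} graph (Seidel matrix $J-I$, determinant $(-1)^{n-1}(n-1)$) and flips the $e$ pairs corresponding to the edges of $\Gamma$, landing directly on $(-1)^n(1-n)+4en$ with no further simplification and no appeal to Lemma~\ref{S3L2}. Your route through $K_n$ works equally well but costs you the extra modular bookkeeping at the end; the empty-graph base is the more economical choice.
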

\begin{proof}
Let $S=J-I$ and observe that $S'=J-2A-I$ can be obtained from $S$ by changing the sign of $2e$ off-diagonal entries. Since $\mathrm{det}S=(-1)^{n-1}(n-1)$, repeated application of Lemma~\ref{S3mL} yields the desired result.
\end{proof}
We will use Theorem~\ref{S2mTH} in the following qualitative form.
\begin{corollary}\label{S2mC}
Let $S$ be a Seidel matrix of order $n$. Then $\mathrm{det}S\equiv 1-n\ (\mathrm{mod}\ 4)$.
\end{corollary}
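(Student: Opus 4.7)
The plan is to derive the corollary as a direct consequence of Theorem~\ref{S2mTH}. First I would note that any Seidel matrix $S$ of order $n$ has zero diagonal and $\pm 1$ off-diagonal entries, so writing $A = (J - S - I)/2$ yields a symmetric $\{0,1\}$-matrix with zero diagonal, i.e.\ the adjacency matrix of a (unique) graph $\Gamma$ on $n$ vertices; by construction $S = J - 2A - I$. Thus Theorem~\ref{S2mTH} applies and gives
\[
\det S \equiv (-1)^n(1-n) + 4en \pmod{8},
\]
where $e$ is the number of edges of $\Gamma$.

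Next I would reduce this congruence modulo $4$. The term $4en$ is automatically $\equiv 0 \pmod 4$, so it suffices to verify $(-1)^n(1-n) \equiv 1-n \pmod 4$ in both parities. When $n$ is even this is an equality. When $n$ is odd, the difference is
\[
(-1)^n(1-n) - (1-n) = -2(1-n) = 2(n-1),
\]
and since $n-1$ is even, this is divisible by $4$. Combining both cases gives $\det S \equiv 1-n \pmod 4$, as required.

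There is essentially no obstacle here beyond the small parity bookkeeping in the second step; the content is already packaged into Theorem~\ref{S2mTH}, and passing from modulus $8$ to modulus $4$ only requires killing the $4en$ term and handling the sign $(-1)^n$ in the leading contribution. Consequently the proof should be a two or three line computation.
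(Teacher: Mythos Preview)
Your argument is correct and is exactly the paper's approach: write $S=J-2A-I$, invoke Theorem~\ref{S2mTH}, and reduce the congruence modulo~$4$. The only omission is the trivial case $n=1$, which the paper treats separately because Theorem~\ref{S2mTH} is stated for $n\geq 2$; there $S=[0]$ and $\det S=0=1-n$.
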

\begin{proof}
If $n=1$ then $\mathrm{det}S=0$. Otherwise the statement follows from Theorem~\ref{S2mTH}.
\end{proof}
We point out the following analogous property.
\begin{theorem}\label{S2mC2}
Let $\Gamma$ be a graph on $n\geq2$ vertices with $e$ edges and let $A$ be its adjacency matrix. Then $\mathrm{per}(J-2A-I)\equiv (-1)^n+n(1-(-1)^n)/2+4en\ (\mathrm{mod}\ 8)$.
\end{theorem}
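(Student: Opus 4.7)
The plan is to mirror the proof of Theorem~\ref{S2mTH}, replacing determinants by permanents and choosing the starting matrix as $S_0 := J-I$. Since the diagonal entries of $J-I$ vanish, only derangements contribute to the permanent, and so $\mathrm{per}(J-I) = \delta(n)$. By Remark~\ref{S2Rem}, we have $\delta(n)\equiv (-1)^n+n(1-(-1)^n)/2\ (\mathrm{mod}\ 8)$: for $n$ even both sides equal $1$, while for $n$ odd both sides equal $n-1$. It will therefore suffice to prove a permanent analog of Lemma~\ref{S3mL} asserting that changing the sign of a single symmetric pair $s_{12}=s_{21}$ shifts the permanent by $4n$ modulo $8$, because $J-2A-I$ is obtained from $J-I$ by performing this operation exactly $e$ times, once for each edge of $\Gamma$.

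To establish the permanent analog, let $S'$ differ from a Seidel matrix $S$ only in the symmetric entries $s_{12}$ and $s_{21}$. Expanding both permanents and collecting the surviving terms,
\[
\mathrm{per}(S)-\mathrm{per}(S') = 2\!\sum_{\sigma(1)=2,\,\sigma(2)\ne 1}\prod_{i=1}^n s_{i,\sigma(i)} \;+\; 2\!\sum_{\sigma(2)=1,\,\sigma(1)\ne 2}\prod_{i=1}^n s_{i,\sigma(i)},
\]
where both sums are effectively restricted to derangements because $s_{i,i}=0$. The involution $\sigma\mapsto\sigma^{-1}$ sends the first index set bijectively onto the second, and the symmetry of $S$ yields $\prod_i s_{i,\sigma^{-1}(i)}=\prod_j s_{j,\sigma(j)}$ after substituting $j=\sigma^{-1}(i)$ and using $s_{i,j}=s_{j,i}$. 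Consequently, the two sums coincide and
\[
\mathrm{per}(S)-\mathrm{per}(S') = 4\sum_{\sigma\in R_n}\prod_{i=1}^n s_{i,\sigma(i)},
\]
with $R_n$ as defined in the proof of Lemma~\ref{S3mL}. Since each product is $\pm 1$, the sum reduces modulo $2$ to $|R_n|=\delta(n-1)$, and Remark~\ref{S2Rem} gives $\delta(n-1)\equiv n\ (\mathrm{mod}\ 2)$; hence $\mathrm{per}(S')\equiv\mathrm{per}(S)+4n\ (\mathrm{mod}\ 8)$.

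Applying this step $e$ times, starting from $\mathrm{per}(J-I)=\delta(n)$, yields $\mathrm{per}(J-2A-I)\equiv\delta(n)+4en\ (\mathrm{mod}\ 8)$, and the evaluation of $\delta(n)$ mod $8$ noted above delivers the claimed formula. The only delicate point, distinguishing this argument from that of Theorem~\ref{S2mTH}, is that the absence of the sign character $(-1)^{I(\sigma)}$ prevents us from invoking $I(\sigma)=I(\sigma^{-1})$ to pair $\sigma$ with its inverse; instead, it is the symmetry of $S$ itself that forces the two sums to agree. Everything else is a routine transcription of the determinant argument.
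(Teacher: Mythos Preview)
Your proof is correct and follows exactly the approach the paper intends: it merely says the result ``follows along the same lines as the proof of Lemma~\ref{S3mL} and Theorem~\ref{S2mTH}, \emph{mutatis mutandis},'' and you have carried out precisely that transcription, arriving at $\mathrm{per}(J-2A-I)\equiv\delta(n)+4en\pmod 8$ and then invoking Remark~\ref{S2Rem}. One small quibble with your closing commentary: the symmetry of $S$ is equally needed in the determinant argument of Lemma~\ref{S3mL} to match $\prod_i s_{i,\sigma(i)}$ with $\prod_i s_{i,\sigma^{-1}(i)}$; the identity $I(\sigma)=I(\sigma^{-1})$ only handles the sign, so the distinction you draw is not quite the right one---but this does not affect the validity of your proof.
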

\begin{proof}
Follows along the same lines as the proof of Lemma~\ref{S3mL} and Theorem~\ref{S2mTH}, \emph{mutatis mutandis}. Since $\mathrm{per}(J-I)=\delta(n)$, we find that $\mathrm{per}(J-2A-I)\equiv\delta(n)+4en\ (\mathrm{mod}\ 8)$. The result follows from Remark~\ref{S2Rem}.
\end{proof}
As an application of Theorem~\ref{S2mTH}, we prove the following result. Recall that a Seidel matrix $S$ is \emph{self-complementary} if $S$ and $-S$ are switching equivalent \cite{NEWSCREF}.
\begin{proposition}\label{pselfc1}
Let $S$ be a Seidel matrix of order $n\equiv 3\ (\mathrm{mod}\ 4)$. Then $\mathrm{det}(-S)\equiv\mathrm{det}S+4\ (\mathrm{mod}\ 8)$. In particular, $S$ cannot be self-complementary.
\end{proposition}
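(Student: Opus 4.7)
The plan is to reduce the congruence directly to Corollary~\ref{S2mC}, exploiting that $n$ is odd. Since $S$ is $n \times n$ with $n \equiv 3 \pmod{4}$ odd, basic multilinearity of the determinant gives $\det(-S) = (-1)^n \det S = -\det S$. Hence the target congruence $\det(-S) \equiv \det S + 4 \pmod 8$ is equivalent to $-2\det S \equiv 4 \pmod 8$, or equivalently $\det S \equiv 2 \pmod 4$.

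But this last statement is immediate from Corollary~\ref{S2mC}: for $n \equiv 3 \pmod 4$ we have $\det S \equiv 1 - n \equiv -2 \equiv 2 \pmod 4$. So the first assertion of the proposition follows at once, with no case analysis needed.

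For the self-complementarity claim, suppose for contradiction that $-S = PDSDP^T$ for some permutation matrix $P$ and diagonal $\pm 1$ matrix $D$. Taking determinants gives
\[
\det(-S) = (\det P)^2 (\det D)^2 \det S = \det S,
\]
since $\det P = \pm 1$ and $\det D = \pm 1$. Combined with the congruence just proved, this forces $0 \equiv 4 \pmod 8$, an obvious contradiction. Thus no such $P$ and $D$ exist, and $S$ is not self-complementary.

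There is no genuine obstacle here: the whole argument is a one-line unpacking of Corollary~\ref{S2mC} together with the observation that for odd order the map $S \mapsto -S$ flips the sign of the determinant, while switching equivalence preserves it. The only point that requires a moment of care is to verify that the residue $1-n \pmod 4$ in Corollary~\ref{S2mC} is exactly $2$ under the hypothesis $n\equiv 3\pmod 4$, so that doubling it produces the non-trivial residue $4 \pmod 8$ needed to obstruct self-complementarity.
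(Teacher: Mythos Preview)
Your proof is correct and follows essentially the same route as the paper's: both arguments use $\det(-S)=-\det S$ for odd $n$ and then invoke the determinant congruence to force $\det(-S)\equiv\det S+4\pmod 8$. The only cosmetic difference is that you appeal to Corollary~\ref{S2mC} (the $\bmod\ 4$ statement), whereas the paper cites the slightly stronger Theorem~\ref{S2mTH} to get $\det S\equiv\pm2\pmod 8$; your observation that the $\bmod\ 4$ corollary already suffices is a mild streamlining.
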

\begin{proof}
From Theorem~\ref{S2mTH} it follows that $\mathrm{det}S\equiv\pm2\ (\mathrm{mod}\ 8)$ and hence $\mathrm{det}(-S)=-\mathrm{det}S\equiv\mp2\equiv\pm2+4\equiv\mathrm{det}S+4\ (\mathrm{mod}\ 8)$.
\end{proof}
We do not know any applications of Theorem~\ref{S2mC2}, but we will use Corollary~\ref{S2mC} several times in the following sections. Understanding the determinant modulo further values might lead to some non-existence results on Seidel matrices with prescribed spectrum.
\section{Small sets of equiangular lines}\label{sec5}
In this section we report on some computational results on Seidel matrices. Prior to this work one representative from each of the switching classes was available up to order $n\leq 11$, see \cite{BMS, MK, TS}. The aim of this section is to present a complete classification of Seidel matrices up to order $n\leq12$. We achieve this result in the following way: first, we present a list of Seidel matrices of order $n=12$ thus complementing the earlier results mentioned above; second, and perhaps more importantly, we provide an efficient way to determine the equivalence of Seidel matrices of these orders using various invariants.

Interestingly, the number of switching classes are known explicitly, due to the following spectacular equicardinality result. Recall that an \emph{Euler graph} is a (not necessarily connected) graph, all of whose vertices are of even degree.
\begin{theorem}[Mallows--Sloane \cite{MS}]\label{MTMS}
The number of switching classes of Seidel matrices of order $n$ and the number of Euler graphs on $n$ vertices are equal in number.
\end{theorem}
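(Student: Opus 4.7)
The plan is to realize both sides of the claimed equality as orbit counts of an $S_n$-action on an $\mathbb{F}_2$-vector space, then use Burnside's lemma combined with a duality between a subspace and its orthogonal complement.

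Model graphs on $[n]=\{1,\ldots,n\}$ as vectors in $V:=\mathbb{F}_2^{\binom{[n]}{2}}$, with coordinates indexed by edges of $K_n$. For each $v\in[n]$ let $\chi_v\in V$ be the characteristic vector of the star at $v$, and set $W:=\langle \chi_1,\dots,\chi_n\rangle$. Switching at a vertex set $U\subseteq[n]$ adds $\sum_{v\in U}\chi_v\in W$ to the graph, so switching classes on $[n]$ correspond exactly to cosets of $W$ in $V$. Since $\sum_v\chi_v=0$ is the only linear relation, $\dim W=n-1$ and there are $2^{\binom{n-1}{2}}$ labeled switching classes. Dually, $\Gamma\in V$ is an Euler graph iff $\langle \Gamma,\chi_v\rangle=0$ for every $v$, so the set of Euler graphs is exactly $W^\perp$, again of size $2^{\binom{n-1}{2}}$. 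This already gives the labeled version of the theorem. The group $S_n$ acts on $V$ by permuting coordinates, preserving $W$ (it permutes the $\chi_v$) and the standard inner product; its orbits on $V/W$ and on $W^\perp$ are exactly the isomorphism classes of switching classes and of Euler graphs on $n$ vertices, respectively.

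To promote equality of labeled counts to the unlabeled setting, observe that the invariant inner product yields an $S_n$-equivariant isomorphism $V\cong V^{\ast}$ sending $W^\perp$ onto the annihilator of $W$, namely $(V/W)^{\ast}$. So $W^\perp\cong (V/W)^{\ast}$ as $\mathbb{F}_2 S_n$-modules. For any finite-dimensional $\mathbb{F}_2$-vector space $M$ with an endomorphism $\sigma$, rank--nullity yields $\dim\ker(\sigma-\mathrm{id})=\dim\mathrm{coker}(\sigma-\mathrm{id})$, and since $\mathrm{coker}(\sigma-\mathrm{id}\mid M)$ is canonically dual to $(M^{\ast})^\sigma$, one deduces $|M^\sigma|=|(M^{\ast})^\sigma|$. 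Applying this with $M=V/W$ gives $|(V/W)^\sigma|=|(W^\perp)^\sigma|$ for every $\sigma\in S_n$, and Burnside's lemma completes the proof.

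The main obstacle is the clean execution of the duality step: one must verify $S_n$-invariance and non-degeneracy of the form, and invoke the fixed-points-equals-cofixed-points identity with care. A more combinatorial alternative (closer in spirit to the original Mallows--Sloane argument) is to compute both $|(V/W)^\sigma|$ and $|(W^\perp)^\sigma|$ directly from the cycle type of $\sigma$ via P\'olya-style enumeration on $\sigma$-orbits of edges, and to verify that the resulting cycle-index polynomials agree term by term. This sidesteps the duality but forces one through an explicit combinatorial manipulation; the duality-based approach I have sketched trades that manipulation for a short conceptual argument.
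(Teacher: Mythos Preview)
The paper does not supply its own proof of this theorem: it is quoted as a classical result of Mallows and Sloane with a citation to \cite{MS}, and used as a black box in Section~\ref{sec5}. So there is no in-paper argument to compare against.

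That said, your argument is correct. The key steps---identifying switching classes with cosets of the cut space $W$, Euler graphs with the cycle space $W^\perp$, passing to the $S_n$-equivariant duality $W^\perp\cong(V/W)^\ast$, and then using $|M^\sigma|=|(M^\ast)^\sigma|$ together with Burnside---are all sound. One small point worth spelling out when you write it up: the form on $V$ is symmetric but alternating over $\mathbb{F}_2$, so ``non-degenerate'' just means the induced map $V\to V^\ast$ is an isomorphism, which is clear since the standard basis is self-dual; $S_n$-equivariance is immediate because $S_n$ permutes that basis.

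For context, the original Mallows--Sloane proof (which you allude to) proceeds by explicit P\'olya enumeration: it computes, for each cycle type of $\sigma\in S_n$, the number of $\sigma$-fixed labeled switching classes and the number of $\sigma$-fixed labeled Euler graphs, and shows both equal $2^{e(\sigma)}$ for a common exponent $e(\sigma)$ determined by the cycle structure. Your duality argument explains \emph{why} those two counts must agree without computing either one, at the cost of a small amount of linear algebra; it is essentially the approach later popularized by Cameron. Either route is acceptable here, but since the paper only cites the result you are free to present whichever you prefer.
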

Since the number of Euler graphs are known due to explicit formulae \cite{L, R} (see also \cite{C, MS}), once enough inequivalent switching classes are identified, they constitute a full classification. It is worthwhile to note that while, for $n$ odd, each switching class contains a unique Euler graph \cite{Sx, SEL}, this property fails to hold for $n$ even \cite{MS}.

Let $\varphi$ be a map defined on the set of Seidel matrices of order $n$. We call this map a \emph{complete invariant} if $\varphi(S_1)$=$\varphi(S_2)$ holds if and only if the Seidel matrices $S_1$ and $S_2$ are equivalent. A carefully chosen complete invariant could provide a convenient way to determine equivalence of Seidel matrices. Therefore the most time-consuming part of the classification, which is equivalence rejection, can be avoided by employing suitably chosen invariants. The choice of the invariants introduced in this section was motivated by the graph reconstruction conjecture (see \cite{BMR}).

Let $\mathrm{det}(xI-S)=x^n+\sum_{i=0}^{n-2} a_ix^i$ be the characteristic polynomial of a Seidel matrix $S$ of order $n$. Given an explicit list of Seidel matrices (see \cite{BMS, TS}) it is easy to check that the truncated coefficient list, denoted by
\beql{S3coefL}
\chi_n(S):=\left[a_0,a_1,\hdots,a_{n-2}\right]
\eeq
is a complete invariant for $n\leq 7$. One slight inconvenience is the data structure of $\chi_n$: comparing and storing vectors is infeasible for higher $n$ due to the large number of inequivalent matrices. Instead, we map various vector valued invariants into the cyclic group $\mathbb{Z}/m\mathbb{Z}$, which we identify with $\{0,1,2,\hdots,m-1\}$.
\begin{example}\label{S5ML}
Let $S$ be a Seidel matrix of order $n$. If $n\leq 6$ then $\chi_n(S)$, defined in \eqref{S3coefL}, is a complete invariant. For $n\in\{7,8,9,10\}$ the following functions $\varphi_n\colon \mathcal{M}_{n}(\mathbb{Z})\to\mathbb{Z}/m\mathbb{Z}$ defined as
\begin{align*}\varphi_7(S)&:=\prod\limits_{a_i\in\chi_7(S)}a_i\ (\mathrm{mod}\ 409), & \varphi_8(S)&:=\prod\limits_{S'}\varphi_7(S')\ (\mathrm{mod}\ 7507),\\
\varphi_9(S)&:=\prod\limits_{S'}\varphi_8(S')\ (\mathrm{mod}\ 268921), & \varphi_{10}(S)&:=\prod\limits_{S'}\varphi_9(S')\ (\mathrm{mod}\ 45131767),
\end{align*}
where $S'$ runs through all $(n-1)\times (n-1)$ principal minors, are complete invariants.
\end{example}
The invariants $\varphi_{8}, \varphi_{9}$ and $\varphi_{10}$ in Example~\ref{S5ML} are recursively defined, and their computation becomes less and less efficient. Therefore we devise an improved invariant for $n=11$, one which can be computed faster, as follows. Let $S$ be a Seidel matrix of order $11$, and let us define the following functions:
\begin{gather}\psi(S):=\{(v_i,m_i)\colon\text{$v_i$ is a $9\times 9$ principal minor of $S$ of multiplicity $m_i$}\}\nonumber,\\
\varphi_{11}(S):=\mathrm{det}S\prod\limits_{(v_i,m_i)\in\psi(S)}(v_i+1)(m_i+1)\ (\mathrm{mod}\ 97124414801).\label{phi11}\end{gather}
Formula \eqref{phi11} was found by trial and improvement. We have the following.
\begin{proposition}\label{S5MP}
The Seidel matrices $S_1$ and $S_2$ of order $11$ are switching equivalent if and only if $\varphi_{11}(S_1)=\varphi_{11}(S_2)$, where $\varphi_{11}$ is defined in \eqref{phi11}.
\end{proposition}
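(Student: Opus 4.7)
The plan is to split Proposition~\ref{S5MP} into its two implications. The forward direction---switching equivalence implies $\varphi_{11}(S_1) = \varphi_{11}(S_2)$---amounts to checking that $\varphi_{11}$ is a switching invariant. The determinant is preserved since $\det(PDSDP^T) = \det S$, and the multiset of $9 \times 9$ principal minors is preserved because any principal submatrix of $PDSDP^T$ indexed by a set $I$ coincides, after relabeling by $P$, with the principal submatrix of $S$ indexed by $P^{-1}(I)$ conjugated by a diagonal $\pm 1$ matrix; both conjugation and relabeling leave the determinant unchanged. Hence $\psi(S)$ is well-defined on switching classes, and so is the entire product appearing in \eqref{phi11}.

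For the backward direction I would rely on the fact that a complete list of switching class representatives of Seidel matrices of order $11$ is already available (see \cite{BMS, TS}). By Theorem~\ref{MTMS}, combined with the explicit formulae of \cite{L, R} for the number of Euler graphs on $n$ vertices, the exact count of switching classes is known, so one can certify that these lists are indeed complete. It then suffices to compute $\varphi_{11}$ on each representative and verify that the resulting values are pairwise distinct modulo $97124414801$. Since this is a finite check, the proposition follows.

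The main obstacle is combinatorial rather than conceptual: there is no structural reason why the particular combination \eqref{phi11} should separate all inequivalent classes, and indeed the paper openly remarks that the formula was found by trial and improvement. The modulus $97124414801$ is presumably chosen large enough to preclude accidental collisions across the many thousands of switching classes at order $11$; a smaller modulus, or a more naive product, might well fail. In practice the argument reduces to running $\varphi_{11}$ on the known list and confirming the absence of collisions, so the real difficulty lies in discovering \eqref{phi11} rather than in verifying it afterwards.
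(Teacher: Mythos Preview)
Your proposal is correct and follows essentially the same approach as the paper: the paper's one-line proof simply says the claim ``can be verified by using the classification of Euler graphs on $11$ vertices \cite{MK}'', which is precisely your backward-direction plan of evaluating $\varphi_{11}$ on a complete list of representatives and checking for collisions. Your explicit justification that $\varphi_{11}$ is a switching invariant is a useful addition that the paper leaves implicit.
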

\begin{proof}
This claim can be verified by using the classification of Euler graphs on $11$ vertices \cite{MK}.
\end{proof}
The main achievement of this section is the following result.
\begin{theorem}\label{S5MT}
The Seidel matrices $S_1$ and $S_2$ of order $12$ are switching equivalent if and only if $\varphi_{12}(S_1)=\varphi_{12}(S_2)$, where $\varphi_{11}$ is defined in \eqref{phi11} and $\varphi_{12}(S)$ is the following multiset$:$
\[\varphi_{12}(S)=\{\varphi_{11}(S')\colon\text{$S'$ is a $11\times 11$ principal submatrix of $S$}\}.\]
\end{theorem}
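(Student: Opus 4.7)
The plan is to establish the two directions of the equivalence separately. For the forward implication, suppose $S_2 = PDS_1DP^T$ for some permutation matrix $P$ and $\pm 1$ diagonal matrix $D$. For each of the twelve index sets $I \subseteq \{1,\dots,12\}$ with $|I|=11$, the principal submatrix of $S_2$ indexed by $P(I)$ equals (after a reordering of rows and columns matching $P^{-1}$) a matrix of the form $D|_I (S_1)_{I,I} D|_I$, which is switching equivalent to the principal submatrix $(S_1)_{I,I}$ of $S_1$. As $I$ varies, this produces a bijection between the twelve $11 \times 11$ principal submatrices of $S_1$ and those of $S_2$ which preserves switching classes. Since $\varphi_{11}$ is a complete invariant on order-$11$ Seidel matrices by Proposition~\ref{S5MP}, it is in particular constant on switching classes, so the multisets $\varphi_{12}(S_1)$ and $\varphi_{12}(S_2)$ coincide.

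The reverse implication is computational, exactly in the spirit of the proof of Proposition~\ref{S5MP}. By Theorem~\ref{MTMS}, the number of switching classes of Seidel matrices of order $12$ equals the number of Euler graphs on $12$ vertices, a number known explicitly from \cite{L,R}. Given an enumeration of one Seidel matrix from each switching class (to be generated in this section as part of the overall classification), one extracts the twelve $11\times 11$ principal submatrices of each representative, evaluates $\varphi_{11}$ on each of them using \eqref{phi11} (appealing to Proposition~\ref{S5MP} to know that the values faithfully encode switching classes at order $11$), assembles the multiset $\varphi_{12}$, and verifies that the resulting multisets are pairwise distinct across all representatives. Distinctness of the multisets then certifies that $\varphi_{12}$ separates the switching classes.

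The main obstacle is practical rather than conceptual. At order $12$ the number of switching classes grows substantially beyond order $11$, so the enumeration of class representatives and the storage and comparison of $\varphi_{12}$-values must be organised efficiently; this is precisely the difficulty that motivated the hashing-to-a-large-modulus approach of Example~\ref{S5ML} and of \eqref{phi11}. A related concern is that the modulus $97124414801$ appearing in $\varphi_{11}$ must be large enough that no accidental collisions arise among the order-$11$ principal submatrices that actually occur inside order-$12$ Seidel matrices; should a collision be discovered, the modulus or the formula \eqref{phi11} would need to be enlarged. The decision to let $\varphi_{12}$ take values in multisets rather than reducing further to a single residue is a deliberate safeguard against losing information at this final step.
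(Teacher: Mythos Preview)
Your proposal is correct and takes essentially the same computational approach as the paper: the forward direction is the easy invariance argument you give, and the reverse direction is verified by generating $87723296$ Seidel matrices of order $12$ with pairwise distinct $\varphi_{12}$-values, which must then represent all switching classes by Theorem~\ref{MTMS}. One small remark: your concern about accidental collisions of $\varphi_{11}$ modulo $97124414801$ is unnecessary, since Proposition~\ref{S5MP} already certifies $\varphi_{11}$ as a complete invariant on \emph{all} order-$11$ Seidel matrices; the only genuine question at order $12$ is whether inequivalent matrices can share the same multiset of order-$11$ switching classes, and that is exactly what the computation rules out.
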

\begin{proof}
From Theorem~\ref{MTMS} we know that there exist exactly $87723296$ switching classes of order $12$ (see an explicit table in \cite{MS}). We have generated this number of matrices with distinct values of $\varphi_{12}$, which are available as a supplement on the web page \cite{WEB}.
\end{proof}
As an application, we recall that if $S$ is a Seidel matrix of order $n$ with eigenvalues $\lambda_i$, $1\leq i\leq n$, then $\mathcal{S}(S):=\sum_{i=1}^n|\lambda_i|$ is the \emph{energy} of $S$. Haemers conjectures, and  R.\ Swinkels verifies for $n\leq 10$, that $\mathcal{S}(S)\geq 2(n-1)$ for all $S$ (see \cite{RND, HAM, KM}). We contribute some additional empirical evidence.
\begin{corollary}
Let $S$ be a Seidel matrix of order $n\leq 12$. Then $\mathcal{S}(S)\geq 2(n-1)$, with equality if and only if $S=\pm(J_n-I_n)$, up to switching equivalence.
\end{corollary}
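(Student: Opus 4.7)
The plan is to reduce the claim to a finite computation via the complete classification of Seidel matrices of order at most $12$ that is now at our disposal. For $n \leq 10$, the inequality (and the identification of the equality case) was already verified by Swinkels, so only the cases $n \in \{11, 12\}$ require attention.

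The matrix $J_n - I_n$ has eigenvalues $n-1$ with multiplicity $1$ and $-1$ with multiplicity $n-1$, so $\mathcal{S}(J_n - I_n) = 2(n-1)$; the same holds for $-(J_n - I_n)$. Hence $\pm(J_n - I_n)$ attains equality, and the task is to show no other switching class does. Using the enumeration of one representative per switching class supplied by Proposition~\ref{S5MP} for $n = 11$ and by Theorem~\ref{S5MT} for $n = 12$, one would, for each representative $S$, compute the eigenvalues of $S$ numerically to sufficient precision, sum their absolute values, and verify $\mathcal{S}(S) \geq 2(n-1)$. The switching class of $J_n - I_n$ is singled out unambiguously by its characteristic polynomial $(x - (n-1))(x+1)^{n-1}$, or equivalently by the invariant $\varphi_n$, so the equality case is easy to isolate from the output.

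The principal obstacle is sheer scale: for $n = 12$ there are $87\,723\,296$ switching classes to test, each requiring an eigenvalue computation on a $12 \times 12$ matrix, and the numerics must be carried out carefully enough to rule out matrices whose energy is close to but strictly greater than $2(n-1)$. To keep this tractable one can prefilter candidates by cheap invariants: Corollary~\ref{S2mC} constrains $\mathrm{det}\,S$ modulo $4$, and the identity $\sum_i \lambda_i^2 = n(n-1)$ together with $\sum_i \lambda_i = 0$ forces any putative equality case to have a very constrained spectrum, matching that of $J_n - I_n$. Since the list of representatives is already a by-product of Theorem~\ref{S5MT} and is available at \cite{WEB}, assembling the verification pipeline is in principle straightforward.
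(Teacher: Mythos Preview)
Your proposal is correct and matches the paper's approach exactly: the paper's proof consists of the single sentence ``We have verified this by computing the spectrum in {\ttfamily Mathematica} numerically,'' relying on the classification of Seidel matrices of order at most $12$ obtained in Section~\ref{sec5}. Your additional remarks about prefiltering via Corollary~\ref{S2mC} and constraining putative equality spectra are not needed (and do not really cut down the search), but they do no harm; the substance of both arguments is the same exhaustive numerical check over the list of representatives.
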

\begin{proof}
We have verified this by computing the spectrum in {\ttfamily Mathematica} numerically.
\end{proof}
Recall that the \emph{automorphism group} of a Seidel matrix $S$, denoted by $\mathrm{Aut}(S)$, is formed by the pairs $(P,D)$ where $P$ is a permutation matrix, $D$ is a diagonal $\pm1$ matrix, such that $S=PDSDP^T$, and where we do not make a distinction between the group elements $(P,D)$ and $(P,-D)$. Having classified all Seidel matrices of order $12$, we have enumerated those having an interesting automorphism group. One of the (many) fascinating properties of Seidel matrices is that their automorphism group can be larger than the automorphism group of any of the ambient graphs contained in their switching class. This property is captured by Cameron's \emph{first cohomology invariant} $\gamma$ \cite{C}, which is nonzero if and only if $|\mathrm{Aut}(S)|>\max\{|\mathrm{Aut}(\Gamma)|\colon \text{$\Gamma$ is an ambient graph in the switching class of $S$}\}$. Since almost all two-graphs satisfy $\gamma=0$ \cite{C}, those for which this does not hold are of exceptional interest. The number of such Seidel matrices up to order $n\leq 12$ is presented in the concluding summarizing table (cf.\ \cite{BMS, C}, but note that both papers incorrectly report the number of $8\times 8$ Seidel matrices with $\gamma\neq 0$).
\[{\arraycolsep=3pt
\begin{array}{c|cccccccccccc}
n   & 1       & 2 			& 3 			& 			4 & 			5 & 			6 & 	 	 7  & 8             & 9            & 10    & 11 & 12\\
\hline
\text{total}  & 1       & 1 			& 2 			& 			3 & 			7 & 		 16 & 		 54 & 243           & 2038         & 33120 & 1182004 & 87723296\\
%\gamma=0         & 1 & 1 & 2 & 3 & 7 & 14 & 54 & 224 & 2038 & 32728 & 1182004 & ?\\
\gamma\neq0     & 0 & 0 & 0 & 0 & 0 & 2  & 0  & 21  & 0    & 392   & 0 & 15274\\
%\gamma\neq0,\beta\neq0 & 0 & 0 & 0 & 0 & 0 & 0  & 0  & 2   & 0    & 0     & 0 & 0\\
\text{self-compl.}        & 1 & 1 & 0 & 1 & 1 & 4 & 0 & 19 & 10 & 360 & 0 & 25112\\
\lambda_{\min}=-5 & 0 & 0 & 0 & 0 & 0 & 1 & 2 & 8 & 33 & 306 & 6727 & 219754\\
\text{invariant} & \mathrm{det} & \mathrm{det} & \mathrm{det} & \chi_4 & \chi_5 & \chi_6 & \varphi_7 & \varphi_8 & \varphi_9 & \varphi_{10} & \varphi_{11} & \varphi_{12}
\end{array}}\]
\begingroup
\captionof{table}{Summary of Seidel matrices of order $n\leq 12$.}
\endgroup
The number of self-complementary Seidel matrices is known explicitly \cite{NEWSCREF}. Note that by Proposition~\ref{pselfc1} there do not exist such Seidel matrices of order $n\equiv 3\ (\mathrm{mod}\ 4)$. Also $\gamma\neq 0$ can hold only if $n$ is even \cite{C}.
\begin{rem}\label{remuselater}
We remark here that instead of considering Seidel matrices, one might want to construct the frame vectors $v_i$, $1\leq i\leq n$ directly. This can be achieved by recursively extending a linearly independent set of frame vectors. It is not too hard to see that, up to change of basis, each coordinate of the frame vectors squares to a rational number.
\end{rem}
\section{Two-graphs with three eigenvalues}\label{XYZZ}
\emph{Two-graphs} were introduced as a tool for studying doubly transitive groups \cite{T}. We do not recall the concept here rigorously, but rather remark that two-graphs form a class of $3$-uniform hypergraphs and are in one-to-one correspondence with switching classes of Seidel matrices (see \cite[p.~ 881]{HIG}). A two-graph is said to be \emph{regular} if the corresponding Seidel matrix has exactly two distinct eigenvalues. These objects are extremely useful as they correspond to the equality case in the relative bound (see Proposition~\ref{s2p23}). However, for many values of $n$, there is no regular two-graph on $n$ points. We observe that there exist various large sets of equiangular line systems, whose corresponding Seidel matrices have exactly three distinct eigenvalues (see Examples \ref{rank3g}, \ref{ex415}, \ref{STS19}, and \ref{ASCh}) and therefore we begin to investigate them in this section. The analogous question for graph adjacency matrices is considered in a series of papers \cite{GRW, CO, VD, MzK}.%
%
% readability of this rather lengthy section, we split it into three parts as follows. In Section \ref{subs1} we discuss structural results; Section \ref{subs2} is dedicated for explicit constructions; and finally, in Section \ref{subs3} we discuss some non-existence results.
\subsection{Existence and structure}\label{subs1}
In this section we completely settle the existence of Seidel matrices with exactly three distinct eigenvalues (see Theorem~\ref{vacuous}). We also show in Theorem~\ref{Euler3graph} that every such Seidel matrix is switching equivalent to one having an Euler graph as an ambient graph.

We begin by recalling a fundamental tool, crucial to this section, called \emph{interlacing}.
\begin{lemma}[see \cite{HB}]\label{intL}
Let $\left[\begin{smallmatrix} A & B\\ B^T & C\end{smallmatrix}\right]$ be a real symmetric matrix of order $n$ with eigenvalues $\lambda_1\leq\dots\leq \lambda_n$ where $A$ is of order $m$ with eigenvalues $\mu_1\leq\dots\leq\mu_m$. Then we have $\lambda_i\leq \mu_i\leq \lambda_{n-m+i}$ for all $i$ with $1\leq i\leq m$.
\end{lemma}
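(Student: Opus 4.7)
The plan is to derive Cauchy interlacing from the Courant--Fischer min--max characterization of eigenvalues. Write $M$ for the full block matrix, and identify each $y\in\mathbb{R}^m$ with the vector $(y,0)\in\mathbb{R}^n$, so that $y^{T}Ay=(y,0)^{T}M(y,0)$ and $\|y\|=\|(y,0)\|$. Thus the Rayleigh quotient of $A$ on $\mathbb{R}^m$ agrees with that of $M$ restricted to the coordinate subspace $W_0:=\mathbb{R}^m\times\{0\}\subseteq\mathbb{R}^n$. The problem therefore reduces to comparing min--max quantities taken over subspaces of $\mathbb{R}^n$ with those taken only over subspaces of $W_0$.

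For the lower bound $\lambda_i\le\mu_i$ I would apply the ``min--max'' form
\[
\lambda_i(M)=\min_{\substack{V\subseteq\mathbb{R}^n\\ \dim V=i}}\ \max_{0\ne x\in V}\frac{x^{T}Mx}{x^{T}x},\qquad
\mu_i(A)=\min_{\substack{V\subseteq\mathbb{R}^m\\ \dim V=i}}\ \max_{0\ne y\in V}\frac{y^{T}Ay}{y^{T}y}.
\]
Every $i$-dimensional $V\subseteq\mathbb{R}^m$ sits inside $W_0$ as an $i$-dimensional subspace of $\mathbb{R}^n$ on which the two Rayleigh quotients agree, so the minimum for $A$ is taken over a subfamily of the subspaces eligible for $M$. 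Hence $\mu_i\ge\lambda_i$.

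For the upper bound $\mu_i\le\lambda_{n-m+i}$ I would use the dual ``max--min'' form
\[
\lambda_{k}(M)=\max_{\substack{W\subseteq\mathbb{R}^n\\ \dim W=n-k+1}}\ \min_{0\ne x\in W}\frac{x^{T}Mx}{x^{T}x},
\]
with $k=n-m+i$, so the competing subspaces have dimension $m-i+1$. The analogous formula for $\mu_i$ uses $(m-i+1)$-dimensional subspaces of $\mathbb{R}^m$, which again embed as $(m-i+1)$-dimensional subspaces of $\mathbb{R}^n$ inside $W_0$; the max for $M$ is taken over a larger family, giving $\lambda_{n-m+i}\ge\mu_i$.

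There is essentially no obstacle here: once one commits to Courant--Fischer and the identification $y\leftrightarrow(y,0)$, both inequalities are one-line consequences of the fact that restricting the admissible subspaces can only increase a ``min'' and decrease a ``max''. The only thing to be careful about is the bookkeeping of indices in the two equivalent forms of Courant--Fischer so that the dimensions $i$ and $m-i+1$ line up correctly with $\lambda_i$ and $\lambda_{n-m+i}$.
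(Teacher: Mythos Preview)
Your argument is correct: this is the standard derivation of Cauchy interlacing from the Courant--Fischer min--max principle, and your index bookkeeping is right in both directions. The paper does not supply its own proof of this lemma; it merely cites \cite{HB} (Brouwer--Haemers, \emph{Spectra of Graphs}), where precisely this Courant--Fischer argument appears, so your approach coincides with the intended one.
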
 
In what follows we will be interested in Seidel matrices of order $n$, having spectrum $\{\left[\lambda\right]^{n-d},\left[\mu\right]^m,\left[\nu\right]^{d-m}\}$, where $2\leq d\leq n-1$ and $1\leq m\leq d-1$. If $\lambda$ happens to be the smallest eigenvalue then such a matrix corresponds to $n$ equiangular lines in $\mathbb{R}^d$. We state what we will refer to as the \emph{standard equations} which we will frequently employ during the course of this section:
\begin{align}
\mathrm{tr}S&=(n-d)\lambda+m\mu+(d-m)\nu=0,\label{St1}\\
\mathrm{tr}S^2&=(n-d)\lambda^2+m\mu^2+(d-m)\nu^2=n(n-1)\label{St2}.
\end{align}
Since $S$ is real and symmetric, its minimal polynomial is $(x-\lambda)(x-\mu)(x-\nu)$, we obtain
\beql{CHT}S^3=(\lambda+\mu+\nu)S^2-(\lambda\mu+\mu\nu+\nu\lambda)S+\lambda\mu\nu I.\eeq
We get another formula by cubing $S=J-2A-I$, namely
\begin{equation}\label{CHTv2}
\begin{split}
S^3&=(n^2-3n+3)J+4(A^2J+JA^2)+4AJA-2JAJ\\
&+2(3-n)(AJ+JA)-8A^3-12A^2-6A-I.
\end{split}
\end{equation}
We start with a structural result, interesting in its own right.
\begin{theorem}\label{Euler3graph}
Let $S$ be a Seidel matrix with exactly three distinct eigenvalues. Then $S$ is switching equivalent to a Seidel matrix $S'$, whose ambient graph is an Euler graph.
\end{theorem}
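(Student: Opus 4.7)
My plan is to reformulate the Eulerian condition as a modular congruence on $S$. An ambient graph $(J-T-I)/2$ of a Seidel matrix $T$ of order $n$ is Eulerian iff every vertex has even degree, equivalently $T\mathbf{1}\equiv(n-1)\mathbf{1}\pmod 4$. Since switching sends $S$ to $DSD$ with $D=\mathrm{diag}(\epsilon)$ and $\epsilon\in\{\pm1\}^n$, the task becomes finding $\epsilon$ with $S\epsilon\equiv(n-1)\epsilon\pmod 4$.

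If $n$ is odd, the statement is immediate from the classical result (cited after Theorem~\ref{MTMS}) that each switching class of Seidel matrices of odd order contains a unique Euler graph, so I focus on $n$ even. A short case analysis shows that for $n$ even, switching at any single vertex flips the parity of every vertex degree of the ambient graph: the chosen vertex $v$ goes from $d_v$ to $n-1-d_v$ (with $n-1$ odd), and every other $d_u$ changes by exactly $1$. Consequently, switching at a set $X$ preserves the degree-parity vector $p\in\mathbb{F}_2^n$ if $|X|$ is even, and adds the all-ones vector $\mathbf{1}$ if $|X|$ is odd. Hence the switching class contains an Euler graph iff $p\in\{\mathbf{0},\mathbf{1}\}$, i.e., iff some (equivalently every) ambient graph of $S$ has degree sequence of constant parity.

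It remains to prove: if $S$ has exactly three distinct eigenvalues $\lambda,\mu,\nu$ and $n$ is even, then $(S\mathbf{1})_u\equiv(S\mathbf{1})_v\pmod 4$ for all $u,v$. I would apply the cubic identity \eqref{CHT} to $\mathbf{1}$, obtaining $S^3\mathbf{1}=(\lambda+\mu+\nu)S^2\mathbf{1}-(\lambda\mu+\mu\nu+\nu\lambda)S\mathbf{1}+\lambda\mu\nu\,\mathbf{1}$, and compare with the expansion \eqref{CHTv2} of $S^3$ in terms of $J$ and $A$ (which act transparently on $\mathbf{1}$ via $J\mathbf{1}=n\mathbf{1}$ and $A\mathbf{1}=\deg$-vector). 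Equating the two, reducing modulo $4$, and using the trace conditions \eqref{St1} and \eqref{St2}, should force the entries of $S\mathbf{1}$ to share a common residue modulo $4$.

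The main obstacle is this last step, which hinges on extracting the mod-$4$ parity information from the spectral data. A cleaner alternative would be to exhibit a $\pm1$ eigenvector $\epsilon$ of $S$ directly, by analysing the orthogonal projections $E_\theta\mathbf{1}$ of $\mathbf{1}$ onto each of the three eigenspaces: if such an eigenvector exists with eigenvalue $\theta\equiv n-1\pmod 4$, then switching by $\mathrm{diag}(\epsilon)$ yields a regular ambient graph with even common degree $(n-1-\theta)/2$, hence Eulerian. The delicate point in either approach is using only the three-eigenvalue hypothesis to rule out a ``genuinely mixed'' degree-parity sequence.
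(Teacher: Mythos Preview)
Your reduction is sound: for $n$ odd you may cite the classical result, and for $n$ even you correctly observe that switching at a single vertex flips every degree parity, so the switching class contains an Euler graph if and only if the degree-parity vector of some (hence any) representative is constant. What remains is to establish that constancy from the three-eigenvalue hypothesis, and this is where the proposal has a genuine gap.

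Applying \eqref{CHT} and \eqref{CHTv2} to $\mathbf{1}$ yields nothing: the identity you obtain is precisely the $i$-th coordinate of $p(S)\mathbf{1}=0$ with $p(x)=(x-\lambda)(x-\mu)(x-\nu)$, which holds tautologically. Concretely, \eqref{CHT} gives $(S^3\mathbf{1})_i=(\lambda+\mu+\nu)(S^2\mathbf{1})_i-\cdots$, but $(S^2\mathbf{1})_i$ itself depends on $d_i$ and $(Ad)_i$, and after expansion every term cancels without isolating $d_i\bmod 2$. Your alternative of seeking a $\pm1$ eigenvector with eigenvalue $\theta\equiv n-1\pmod 4$ is too strong: it would force the switched ambient graph to be \emph{regular} of even degree, yet the switching class of a three-eigenvalue Seidel matrix need not contain any regular Euler graph (for the icosahedron, with Seidel spectrum $\{[-1-2\sqrt5]^3,[1]^6,[-1+2\sqrt5]^3\}$ on $12$ vertices, the only possible regular degree is $k=5$, which is odd).

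The missing idea is to read off the \emph{diagonal} of $S^3$ rather than its row sums. Because $[S]_{ii}=0$ and $[S^2]_{ii}=n-1$ for every $i$, equation \eqref{CHT} makes $[S^3]_{ii}=(\lambda+\mu+\nu)(n-1)+\lambda\mu\nu$ a constant independent of $i$; this constancy is the crucial input that the row-sum approach lacks. The paper first switches so that $d_1=0$ (your constant-parity observation makes this normalisation optional but convenient), then computes from \eqref{CHTv2} that
\[
[S^3]_{ii}=n^2-3n+2-4e+4[A^2J+JA^2]_{ii}-8[A^3]_{ii}+4d_i^2-4nd_i.
\]
Since $[A^2J+JA^2]_{ii}=2\sum_j[A^2]_{ij}$ is even and $n$ is even, the constancy of the right-hand side modulo $8$ forces $d_i^2$ to be constant modulo $2$, hence all $d_i$ share a parity; with $d_1=0$ they are all even.
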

\begin{proof}
Assume that $S$ is of order $n$. For $n$ odd, the statement follows from a general result due to Seidel \cite{Sx, SEL}. Hence, we assume that $n$ is even. Moreover, up to switching equivalence, we can assume that $S$ is a Seidel matrix whose ambient graph $\Gamma$ has $e$ edges and has a vertex $v_1$ with vertex degree $\mathrm{d}(v_1)=0$. From equation \eqref{CHT} we observe that for all $i$ with $1\leq i\leq n$ we have
\beql{uj1}[S^3]_{ii}=[S^3]_{11}.\eeq
From equation \eqref{CHTv2} it follows, by using $JAJ=2eJ$, that
\beql{uj2}[S^3]_{ii}=n^2-3n+2-4e+4[A^2J+JA^2]_{ii}-8[A^3]_{ii}+4(\mathrm{d}(v_i))^2-4n\mathrm{d}(v_i).\eeq
Plug in $i=1$ to conclude that $[S^3]_{11}=n^2-3n+2-4e$. Therefore, by combining equations~\eqref{uj1} and \eqref{uj2}, it follows that $4[A^2J+JA^2]_{ii}-8[A^3]_{ii}+4(\mathrm{d}(v_i))^2-4n\mathrm{d}(v_i)=0$ for all $i$ with $1\leq i\leq n$. Since $n$ is even so is $(\mathrm{d}(v_i))^2$ for all $i$ with $1\leq i\leq n$, and hence $\Gamma$ is an Euler graph as claimed.
\end{proof}
The following is a consequence of Theorem~\ref{Euler3graph}.
\begin{corollary}\label{thisstr}
Let $S$ be a Seidel matrix of order $n$ with exactly three distinct eigenvalues $\lambda$, $\mu$, and $\nu$. Then
\begin{enumerate}[$($a$)$]
\item $(n-1)(\lambda+\mu+\nu)+\lambda\mu\nu\equiv n^2+n+2\ (\mathrm{mod}\ 4)$\emph{;}
\item $(n-2)(\lambda+\mu+\nu)+\lambda\mu+\mu\nu+\nu\lambda\equiv n^2+n+1\ (\mathrm{mod}\ 4)$\emph{;}
\item $\lambda\mu+\mu\nu+\nu\lambda\equiv 1\ (\mathrm{mod}\ 2)$.
\end{enumerate}
\end{corollary}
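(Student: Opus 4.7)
Let $p(x)=x^3-\sigma_1 x^2+\sigma_2 x-\sigma_3$ denote the minimal polynomial of $S$, where $\sigma_1=\lambda+\mu+\nu$, $\sigma_2=\lambda\mu+\mu\nu+\nu\lambda$, $\sigma_3=\lambda\mu\nu$. These are integers, since $p$ is a monic divisor in $\mathbb{Z}[x]$ of the integer-coefficient characteristic polynomial $\chi(x)=\det(xI-S)$, and $p(S)=0$. By the switching argument in the proof of Theorem~\ref{Euler3graph}, we may replace $S$ by a switching-equivalent matrix whose ambient graph has a vertex $v_1$ of degree $0$; then $A_{1k}=0$ for all $k$ and $S_{1j}=1$ for $j\neq 1$.

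For part (a), I take the $(1,1)$-entry of $p(S)=0$. Using $[S^2]_{11}=n-1$ and computing $[S^3]_{11}$ from equation~\eqref{CHTv2} (every term involving $A$ vanishes at the isolated vertex~$1$), one obtains $[S^3]_{11}=n^2-3n+2-4e$. Hence $(n-1)\sigma_1+\sigma_3=n^2-3n+2-4e\equiv n^2+n+2\pmod 4$.

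For part (c), reduce $\chi(x)$ modulo $2$. Since $S\equiv J-I\pmod 2$, we have $\chi(x)\equiv\det((x+1)I-J)\equiv (x+1-n)(x+1)^{n-1}\pmod 2$, which equals $(x+1)^n$ for $n$ even and $x(x+1)^{n-1}$ for $n$ odd. The only monic cubic divisors of these in $\mathbb{F}_2[x]$ are $(x+1)^3=x^3+x^2+x+1$ and $x(x+1)^2=x^3+x$, both of which have coefficient $1$ at $x$. Since $p(x)\pmod 2$ is such a divisor, $\sigma_2\equiv 1\pmod 2$.

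For part (b), I take the $(1,j)$-entry of $p(S)=0$ for $j\neq 1$. Using $S_{1j}=1$, $[S^2]_{1j}=(n-2)-2d_j$, and computing $[S^3]_{1j}=\sum_{k\neq 1}[S^2]_{kj}$ via a column-sum calculation, one obtains for every $j\neq 1$ the identity
\[
K:=\sigma_2-(n-2)\sigma_1+(n^2-3n+3)-4e=2d_j\bigl[(n-3)-\sigma_1\bigr]-4\sum_{k\sim j}d_k.
\]
Reducing modulo $2$ and using part (c) gives $(n-2)\sigma_1\equiv 0\pmod 2$; for $n$ odd this forces $\sigma_1\equiv 0\pmod 2$, while for $n$ even the analysis in (c) forces $p\equiv (x+1)^3\pmod 2$ and hence $\sigma_1\equiv 1\pmod 2$. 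In either case, $\sigma_1\equiv n+1\pmod 2$, so $(n-3)-\sigma_1$ is even and the right-hand side of the identity is a multiple of $4$, giving $K\equiv 0\pmod 4$. Rearranging, $\sigma_2-(n-2)\sigma_1\equiv-(n^2-3n+3)\pmod 4$; adding $2(n-2)\sigma_1\equiv 0\pmod 4$ (valid since $(n-2)\sigma_1$ is even) yields
\[
(n-2)\sigma_1+\sigma_2\equiv-(n^2-3n+3)\equiv n^2+n+1\pmod 4,
\]
the last step following from $(n^2+n+1)-(-(n^2-3n+3))=2(n^2-n+2)\equiv 0\pmod 4$ (as $n^2-n$ is always even). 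The main obstacle is the uniform deduction $\sigma_1\equiv n+1\pmod 2$, which fuses the mod-$2$ analysis of $\chi$ with the mod-$2$ consequence of the per-$j$ identity.
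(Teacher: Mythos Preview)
Your proof is correct and follows a route that differs from the paper's for parts (c) and (b). The paper invokes Theorem~\ref{Euler3graph} in full strength: with all vertex degrees even one has $AJ+JA\equiv 0\pmod 2$, so reducing \eqref{CHTv2} modulo $4$ yields a matrix congruence $S^3\equiv 2A-I+(n^2+n-1)J\pmod 4$; comparing this with \eqref{CHT} and reading one off-diagonal position with $a_{ij}=0$ and one with $a_{k\ell}=1$ directly decouples (b) and (c). You, by contrast, use only the trivial isolated-vertex normalization. Your argument for (c)---factoring $\chi(x)$ over $\mathbb{F}_2$ and listing the possible monic cubic divisors---is graph-free and rather clean. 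For (b) you then bootstrap: the per-$j$ identity together with (c) gives $\sigma_1\equiv n+1\pmod 2$, which makes the $d_j$-term on the right divisible by $4$. The paper's route is shorter once Theorem~\ref{Euler3graph} is in hand; yours avoids that theorem entirely at the price of the two-stage reduction in (b). One small wording slip: the $-2JAJ$ term in \eqref{CHTv2} does \emph{not} vanish at the isolated vertex but contributes $-4e$; your formula $[S^3]_{11}=n^2-3n+2-4e$ already accounts for this correctly, so only the parenthetical remark is imprecise.
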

\begin{proof}
We can assume, by Theorem \ref{Euler3graph}, that $S=J-2A-I$, where $A$ is the adjacency matrix of some Euler graph $\Gamma$. Therefore $AJ+JA\equiv 0\ (\mathrm{mod}\ 2)$. Consider equation \eqref{CHTv2} modulo $4$ and note that $JAJ=2eJ$, where $e$ is the number of edges in $\Gamma$. Whence, we have $S^3\equiv2A-I+(n^2+n-1)J\ (\mathrm{mod}\ 4)$. Comparing this with equation~\eqref{CHT} and replacing $S^2$ by $(J-2A-I)^2$ yields
\begin{equation*}
\begin{split}
&(\lambda+\mu+\nu)((n-2)J+I)-(\lambda\mu+\mu\nu+\nu\lambda)(J-2A-I)+\lambda\mu\nu I\\
&\equiv2A-I+(n^2+n-1)J\ (\mathrm{mod}\ 4).
\end{split}
\end{equation*}
Consider the $(i,j)$th entry of this congruence. By taking $i=j$ we obtain part~(a). By taking two distinct off-diagonal coordinates $i\neq j$ and $k\neq \ell$, for which $a_{ij}=0$ and $a_{k\ell}=1$, respectively, we obtain a system of two congruences, being equivalent to part (b) and (c).
\end{proof}
Next we prove that if $S$ is a Seidel matrix with exactly three distinct eigenvalues, then the multiplicities of these cannot be all equal.
\begin{proposition}\label{S3prop3}
Let $S$ be a Seidel matrix of order $n$ with spectrum $\{\left[\lambda\right]^{n-d}, \left[\mu\right]^m$, $\left[\nu\right]^{d-m}\}$ where $2\leq d\leq n-1$ and $1\leq m\leq d-1$. Then the case $m=n-d=d-m$ is impossible.
\end{proposition}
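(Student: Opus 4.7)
The plan is to set $k := n-d = m = d-m$, so that $n = 3k$, $d = 2k$, and $m = k$, and then derive a contradiction by combining the modular identities of Corollary~\ref{thisstr} with Corollary~\ref{S2mC}. The standard equations \eqref{St1} and \eqref{St2} immediately yield $\lambda + \mu + \nu = 0$ and $\lambda^2 + \mu^2 + \nu^2 = 9k-3$, whence
\[
\lambda\mu + \mu\nu + \nu\lambda \;=\; -\frac{9k-3}{2}.
\]
Since $S$ is an integer matrix with three distinct eigenvalues, its minimal polynomial $(x-\lambda)(x-\mu)(x-\nu)$ divides the characteristic polynomial and, by Gauss's lemma, has integer coefficients; thus all three elementary symmetric functions of $\lambda,\mu,\nu$ are integers. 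In particular $k$ must be odd, and then Corollary~\ref{thisstr}(c) forces $-(9k-3)/2$ to be odd, i.e., $9k \equiv 5 \pmod{4}$, i.e., $k \equiv 1 \pmod{4}$.

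Next, feeding $\lambda+\mu+\nu = 0$ and $n = 3k \equiv 3 \pmod 4$ into Corollary~\ref{thisstr}(a) gives $\lambda\mu\nu \equiv n^2+n+2 \equiv 2 \pmod 4$. On the other hand, $\det S = (\lambda\mu\nu)^k$, and Corollary~\ref{S2mC} says $\det S \equiv 1-n \equiv 2 \pmod 4$. Writing $\lambda\mu\nu = 2(2t+1)$ for some integer $t$, we obtain $(\lambda\mu\nu)^k = 2^k(2t+1)^k$, which is divisible by $4$ as soon as $k \ge 2$. This contradicts $(\lambda\mu\nu)^k \equiv 2 \pmod 4$, so only $k = 1$ can survive.

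Finally I would dispose of $k = 1$ (i.e., $n = 3$, $d = 2$, $m = 1$) directly: in this case the eigenvalues would be roots of $x^3 - 3x - e_3$ with $e_3 = \lambda\mu\nu \equiv 2 \pmod 4$, hence $|e_3| \ge 2$, and the discriminant $108 - 27 e_3^2$ is non-positive, ruling out three distinct real roots. (Equivalently, the unique switching class of Seidel matrices of order $3$ has spectrum $\{2, -1, -1\}$.) I anticipate no serious obstacle in the argument; the main conceptual check is simply that the modular identities of Corollary~\ref{thisstr} and Corollary~\ref{S2mC} are insensitive to whether the individual eigenvalues are rational, since they are phrased at the level of the integer-valued symmetric functions of the spectrum.
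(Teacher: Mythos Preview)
Your proof is correct and follows essentially the same route as the paper: from the standard equations you obtain $\lambda+\mu+\nu=0$ and $\lambda\mu+\mu\nu+\nu\lambda=-3(n-1)/2$, use Corollary~\ref{thisstr} to pin down $n\equiv3\pmod4$ (you via part~(c), the paper via part~(a)), and then Corollary~\ref{S2mC} forces the common multiplicity to equal~$1$, which is ruled out by inspection of the $3\times3$ case. Your explicit Gauss's-lemma justification for the integrality of the symmetric functions and the discriminant argument for $n=3$ simply spell out details the paper leaves implicit.
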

\begin{proof}
Suppose the contrary, i.e., there exists a Seidel matrix where $m=n-d=d-m=n/3$. It follows that $n\equiv 0\ (\mathrm{mod}\ 3)$. Using equations \eqref{St1} and \eqref{St2} we find that $\lambda+\mu+\nu=0$ and $\lambda\mu+\mu\nu+\nu\lambda=-3(n-1)/2$. In particular, $n$ is odd. Plugging these into part~(a) of Corollary~\ref{thisstr} we find that $n\equiv 3\ (\mathrm{mod}\ 4)$. Now we can put $n=12M+3$ and use Corollary~\ref{S2mC} to infer that $\mathrm{det}S=(\lambda\mu\nu)^{4M+1}\equiv 1-n\equiv2\ (\mathrm{mod}\ 4)$. This forces $\lambda\mu\nu$ to be even and $M=0$, and since there is no such $3\times 3$ Seidel matrix, we have a contradiction.
\end{proof}
\begin{corollary}\label{cubicfff}
Every Seidel matrix with exactly three distinct eigenvalues has an integer eigenvalue.
\end{corollary}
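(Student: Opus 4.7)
The plan is to reduce this to Proposition~\ref{S3prop3} via a Galois-theoretic argument on the minimal polynomial of $S$. Let $S$ be a Seidel matrix with exactly three distinct eigenvalues $\lambda$, $\mu$, $\nu$, and multiplicities as in the setup of this section. The minimal polynomial of $S$ is $m_S(x) = (x-\lambda)(x-\mu)(x-\nu)$, and since $S$ has integer entries, its characteristic polynomial lies in $\mathbb{Z}[x]$; the minimal polynomial, being a monic divisor of the characteristic polynomial over $\mathbb{Q}$, also lies in $\mathbb{Z}[x]$ (its coefficients are algebraic integers and rationals, hence integers). So $\lambda$, $\mu$, $\nu$ are the roots of a monic cubic in $\mathbb{Z}[x]$.

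Next I would apply the rational root theorem: any rational root of $m_S(x)$ must be an integer (divisor of the constant term). So if any one of $\lambda$, $\mu$, $\nu$ is rational, it is automatically an integer, and we are done. The remaining case is when $m_S(x)$ is irreducible over $\mathbb{Q}$. Then $\lambda$, $\mu$, $\nu$ form a single Galois orbit under $\mathrm{Gal}(\overline{\mathbb{Q}}/\mathbb{Q})$. The characteristic polynomial of $S$ factors as $(x-\lambda)^{n-d}(x-\mu)^{m}(x-\nu)^{d-m}$, and since this polynomial has integer (in particular rational) coefficients, it is fixed by every Galois automorphism. Because the Galois group acts transitively on $\{\lambda, \mu, \nu\}$, the three multiplicities must coincide; that is, $n-d = m = d-m$.

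This last equality is ruled out by Proposition~\ref{S3prop3}, so the irreducible case cannot occur, and we conclude that $m_S(x)$ has an integer root, i.e., at least one of $\lambda$, $\mu$, $\nu$ is an integer.

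There is no real obstacle: the only nontrivial ingredient beyond the rational root theorem is the observation that equal multiplicities in the characteristic polynomial are forced whenever the eigenvalues form a single Galois orbit, and this is precisely the configuration excluded by Proposition~\ref{S3prop3}.
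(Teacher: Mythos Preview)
Your proof is correct and is essentially the same approach as the paper's: the paper simply writes ``Follows immediately from Proposition~\ref{S3prop3},'' and the Galois-theoretic argument you spell out is exactly the standard reasoning implicitly behind that one-line claim. You have just made explicit why irrationality of all three eigenvalues would force equal multiplicities, which is precisely what Proposition~\ref{S3prop3} excludes.
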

\begin{proof}
Follows immediately from Proposition~\ref{S3prop3}.
\end{proof}
The following is a generalization of Proposition~\ref{s2p23}.
\begin{theorem}\label{3g}
Let $d\geq 1$ and let $S$ be a Seidel matrix of order $n\geq 2$ with smallest eigenvalue $\lambda_0$ of multiplicity $n-d\geq1$. Assume that $S$ has another eigenvalue $\mu$ of multiplicity $m\leq d$. Then
\beql{1}
\left|\mu+\frac{\lambda_0(n-d)}{d}\right|\leq\frac{\sqrt{n(d(n-1)-\lambda_0^2(n-d))}}{d}\cdot\sqrt{\frac{d-m}{m}}.
\eeq
Equality holds if and only if $S$ has at most three distinct eigenvalues.
\end{theorem}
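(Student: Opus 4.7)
The plan is to apply the Cauchy--Schwarz inequality to the ``other'' eigenvalues of $S$, which are tightly constrained by the two standard trace identities. Write the spectrum of $S$ as $\lambda_0$ with multiplicity $n-d$, $\mu$ with multiplicity $m$, and the remaining eigenvalues $\theta_1,\dots,\theta_{d-m}$ (if any). Since $S$ has zero diagonal, $\mathrm{tr}\,S=0$ gives
\[
\sum_{i=1}^{d-m}\theta_i = -(n-d)\lambda_0 - m\mu,
\]
and since $S$ has $\pm 1$ off-diagonal entries, $\mathrm{tr}\,S^{2}=n(n-1)$ gives
\[
\sum_{i=1}^{d-m}\theta_i^{2}= n(n-1)-(n-d)\lambda_0^{2}-m\mu^{2}.
\]

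Next, invoke Cauchy--Schwarz in the form $\bigl(\sum_i \theta_i\bigr)^{2}\leq (d-m)\sum_i \theta_i^{2}$ (which is vacuous if $d=m$ and otherwise becomes the quantitative statement we want). Substituting the two expressions above and expanding both sides, one obtains
\[
(n-d)^{2}\lambda_0^{2}+2m(n-d)\lambda_0\mu+m^{2}\mu^{2}\leq (d-m)\bigl[n(n-1)-(n-d)\lambda_0^{2}-m\mu^{2}\bigr].
\]
Collecting terms and using the identity $m(n-d)^{2}+n(d-m)(n-d)=d(n-d)(n-m)$ to group the $\lambda_0^{2}$ contributions, the whole inequality simplifies (after multiplying by $d$ and completing the square) to
\[
m\bigl(d\mu+(n-d)\lambda_0\bigr)^{2}\leq n(d-m)\bigl(d(n-1)-(n-d)\lambda_0^{2}\bigr),
\]
which, upon dividing by $d^{2}m$ and taking square roots, is precisely \eqref{1}.

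Finally, equality in Cauchy--Schwarz holds if and only if the $\theta_i$ are all equal (or the sum is empty, i.e.\ $d=m$); in either case $S$ has at most three distinct eigenvalues, and conversely if $S$ has at most three distinct eigenvalues the $\theta_i$ coincide and equality propagates backwards through the algebraic manipulations. I do not anticipate a real obstacle here: the content is just Cauchy--Schwarz plus the two trace identities, and the only calculation that needs to be done carefully is verifying the factorization $m(n-d)^{2}+n(d-m)(n-d)=d(n-d)(n-m)$, which is what matches the bound's precise form with the raw Cauchy--Schwarz output.
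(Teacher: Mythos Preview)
Your proposal is correct and follows essentially the same route as the paper's proof: both apply Cauchy--Schwarz to the remaining $d-m$ eigenvalues, using the trace identities $\mathrm{tr}\,S=0$ and $\mathrm{tr}\,S^2=n(n-1)$, and then perform the algebraic rearrangement you outline (the paper simply says ``the result follows after some algebraic manipulations''). Your explicit factorization step and the treatment of the $d=m$ boundary case match what is needed, and the equality characterization via the Cauchy--Schwarz equality condition is exactly the paper's argument.
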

The proof is standard, and follows the same spectral analysis that appears in \cite{vLS}.
\begin{proof}
First note that the right hand side is well-defined by \cite[Lemma 6.1]{vLS}. Let us denote the remaining eigenvalues of $S$ by $\lambda_1,\lambda_2,\hdots,\lambda_{d-m}$. Then, by using that $\mathrm{tr}S=0$ and $\mathrm{tr}S^2=n(n-1)$, we find, after an application of the Cauchy--Schwartz inequality, that
\begin{align*}\left((n-d)\lambda_0+m\mu\right)^2&=\left(\sum_{i=1}^{d-m}\lambda_i\right)^2\\
&\leq(d-m)\sum_{i=1}^{d-m}\lambda_i^2=(d-m)\left(n(n-1)-(n-d)\lambda_0^2-m\mu^2\right).
\end{align*}
The result follows after some algebraic manipulations.
\end{proof}
If $S$ is a Seidel matrix with exactly three distinct eigenvalues, then Theorem~\ref{3g} boils down to an algebraic identity, which in turn can be used to tabulate feasible parameter sets of such Seidel matrices. Some interesting parameter sets are shown in Appendix~\ref{Ast}.

Now we turn to a description of the case when two of the three multiplicities are equal.
\begin{proposition}\label{S3T47}
Let $S$ be a Seidel matrix of order $n$, let $(n+1)/2\leq d\leq n-1$, and suppose that $S$ has smallest eigenvalue $\lambda_0$ of multiplicity $n-d$. Assume further that $S$ has two additional eigenvalues $\mu\in\mathbb{Z}$ and $\nu$ of multiplicity $2d-n$ and $n-d$, respectively. Then 
\[\left\{\lambda_0,\nu\right\}=\left\{\frac{(n-2d)\mu \mp\sqrt{n (2 (n - 1) (n - d) +  (n - 2 d)\mu^2)}}{2 (n-d)}\right\},\]
with $\lambda_0<\nu$, and hence necessarily $d\mu\equiv 0\ (\mathrm{mod}\ n-d)$, and
\[n(n-1)/2+(1+(-1)^\mu)d\mu/4\equiv 0\ (\mathrm{mod}\ n-d).\]
\end{proposition}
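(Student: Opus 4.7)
The plan is to combine the standard equations \eqref{St1}--\eqref{St2} with the fact that the minimal polynomial of $S$ lies in $\mathbb{Z}[x]$.

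Substituting the given multiplicities into \eqref{St1} yields $\lambda_0+\nu=(n-2d)\mu/(n-d)$, and then eliminating $\lambda_0^2+\nu^2$ from \eqref{St2} via $(\lambda_0+\nu)^2=\lambda_0^2+\nu^2+2\lambda_0\nu$ gives $\lambda_0\nu=[(2d-n)d\mu^2-n(n-1)(n-d)]/[2(n-d)^2]$. Applying the quadratic formula to $x^2-(\lambda_0+\nu)x+\lambda_0\nu=0$ and simplifying the discriminant via the identity $(n-2d)^2\mu^2+2(n-2d)d\mu^2=n(n-2d)\mu^2$ produces the displayed closed form for $\{\lambda_0,\nu\}$, with the minus sign corresponding to the smaller root $\lambda_0$.

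For the divisibility conditions, I would use the standard fact that, since $S$ is a symmetric integer matrix with exactly three distinct eigenvalues $\lambda_0,\mu,\nu$, its minimal polynomial $(x-\lambda_0)(x-\mu)(x-\nu)$ belongs to $\mathbb{Z}[x]$. Combined with $\mu\in\mathbb{Z}$ this forces both $\lambda_0+\nu\in\mathbb{Z}$ and $\lambda_0\nu\in\mathbb{Z}$. The first is equivalent to $(n-d)\mid(n-2d)\mu$, which, using $n-2d\equiv -d\ (\mathrm{mod}\ n-d)$, is exactly the first congruence $d\mu\equiv 0\ (\mathrm{mod}\ n-d)$. Writing $d\mu=k(n-d)$ with $k\in\mathbb{Z}$, the integrality of $\lambda_0\nu$ translates into $(2d-n)k\mu-n(n-1)\equiv 0\ (\mathrm{mod}\ 2(n-d))$. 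Using $2d-n\equiv n\ (\mathrm{mod}\ 2(n-d))$ together with the identity $nk\mu=(n-d)k(\mu+k)$ (a consequence of $dk\mu=k\cdot d\mu=k^2(n-d)$), this becomes
\[n(n-1)\equiv(n-d)k(\mu+k)\ (\mathrm{mod}\ 2(n-d)).\]

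The final step is a short parity analysis of $k(\mu+k)$. If $\mu$ is odd, then $k$ and $\mu+k$ have opposite parities, so $k(\mu+k)$ is even; hence the right-hand side vanishes modulo $2(n-d)$, giving $2(n-d)\mid n(n-1)$, equivalent to $n(n-1)/2\equiv 0\ (\mathrm{mod}\ n-d)$, which is the $\mu$ odd instance of the target. If $\mu$ is even, then $k(\mu+k)$ has the same parity as $k$, and splitting on whether $k$ is even or odd shows that $(n-d)k(\mu+k)\equiv -d\mu\ (\mathrm{mod}\ 2(n-d))$ in either case, so the congruence becomes $n(n-1)+d\mu\equiv 0\ (\mathrm{mod}\ 2(n-d))$, equivalent to the $\mu$ even instance of the target. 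The main obstacle is bookkeeping the parity of $k$ correctly in the even case, but it reduces to a direct verification in two subcases.
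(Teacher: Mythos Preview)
Your proof is correct and follows essentially the same route as the paper: derive $\lambda_0+\nu$ and $\lambda_0\nu$ from the trace identities \eqref{St1}--\eqref{St2}, use that these symmetric functions are integers (the paper phrases this as ``$\lambda_0+\nu$ and $\lambda_0\nu$ are algebraic integers'', you via the minimal polynomial lying in $\mathbb{Z}[x]$), write $d\mu=k(n-d)$, and finish with a short parity split on $\mu$. The only cosmetic differences are that the paper cites Theorem~\ref{3g} for the closed form rather than redoing the quadratic-formula computation, and organizes the final congruence via the identity $n(n-1)+d\mu(\mu-1)\equiv k(k-1)(n-d)\equiv 0\pmod{2(n-d)}$ instead of your identity $nk\mu=(n-d)k(\mu+k)$; both bookkeeping devices lead to the same two subcases and the same conclusion.
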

Note that Proposition~\ref{S3T47} covers all cases where two out of the three eigenvalue multiplicities are equal, up to taking the negative of the Seidel matrix if it is necessary.
\begin{proof}
The values of $\lambda_0$ and $\nu$ follow from Theorem~\ref{3g}, while the necessary conditions can be obtained as follows. On the one hand $\lambda_0+\nu$ is an algebraic integer, hence we readily see that there exists an integer $M$ such that $d\mu=M(n-d)$. On the other hand $\lambda_0\nu$ is also an algebraic integer, and therefore $-2(n-d)\lambda_0 \nu = n(n-1) + d \mu (\mu - M)$. Hence, we have
\[n(n-1)+d\mu(\mu-1)\equiv M(M-1)(n-d)\equiv0\ (\mathrm{mod}\ 2(n-d)).\]
For $\mu$ odd $\mu-1$ is even and the statement follows after recalling $d\mu=M(n-d)$. Otherwise, if $\mu$ is even then we can rewrite the left hand side as $n(n-1)+d\mu(\mu-2)+d\mu=n(n-1)+M(n-d)(\mu-2)+d\mu$, and the statement follows.
\end{proof}
An infinite family of examples where the multiplicity of two eigenvalues agree can be obtained from the symmetric Paley matrices of prime power order $n\equiv 1\ (\mathrm{mod}\ 4)$; the spectrum of these matrices is $\{\left[-\sqrt{n}\right]^{(n-1)/2},\left[0\right]^1,\left[\sqrt{n}\right]^{(n-1)/2}\}$. There exists an additional example of order $n=12\equiv 0\ (\mathrm{mod}\ 4)$ with spectrum $\{\left[-1-2\sqrt5\right]^3,[1]^6,\left[-1+2\sqrt5\right]^3\}$, coming from the icosahedron. Note that in all of the previous examples the integer eigenvalue is neither the smallest, nor the largest one. This is not the case in general, as there exists a Seidel matrix of order $n=10\equiv 2\ (\mathrm{mod}\ 4)$ with spectrum $\{[-3]^{4},[2-\sqrt5]^3,[2+\sqrt5]^3\}$. Further examples with quadratic irrational eigenvalues come from Proposition~\ref{existv1}, but none of them is of order $n\equiv 3\ (\mathrm{mod}\ 4)$, as we show next.
\begin{lemma}\label{quadr}
Let $S$ be a Seidel matrix of order $n$ with exactly three distinct eigenvalues such that two of them have the same multiplicity. Then $n\not\equiv 3\ (\mathrm{mod}\ 4)$.
\end{lemma}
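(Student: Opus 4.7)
The plan is to suppose for contradiction that $n\equiv 3\ (\mathrm{mod}\ 4)$ and combine the mod-$4$ determinantal identity of Corollary~\ref{S2mC} with a discriminant analysis of the standard equations \eqref{St1}--\eqref{St2}. Since the conclusion is insensitive to replacing $S$ by $-S$, the remark following Proposition~\ref{S3T47} allows me to assume that $S$ has spectrum $\{[\lambda_0]^{n-d},[\mu]^{2d-n},[\nu]^{n-d}\}$ with $\mu\in\mathbb{Z}$ and $\lambda_0<\nu$. Setting $s:=\lambda_0+\nu$ and $p:=\lambda_0\nu$, both are rational algebraic integers and hence lie in $\mathbb{Z}$. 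Straightforward elimination in \eqref{St1}--\eqref{St2} will yield the compact identity
\[(n-d)^2(s^2-4p)\;=\;n\bigl(2(n-1)(n-d)-(2d-n)\mu^2\bigr),\]
which reduces the question of reality and distinctness of $\lambda_0,\nu$ to a simple inequality on $\mu$.

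Next I will carry out a case split driven by Corollary~\ref{S2mC}: it gives $\det S\equiv 1-n\equiv 2\ (\mathrm{mod}\ 4)$, so the $2$-adic valuation satisfies $v_2(\det S)=1$. From $\det S=p^{n-d}\mu^{2d-n}$ one therefore obtains $(n-d)v_2(p)+(2d-n)v_2(\mu)=1$, and since $\det S\neq 0$ forces $p,\mu\neq 0$, exactly two possibilities survive: either \emph{(i)} $n-d=1$ with $v_2(p)=1$ and $\mu$ odd, or \emph{(ii)} $2d-n=1$ with $v_2(\mu)=1$ and $p$ odd. The crucial rigidity is that whichever eigenvalue carries the factor of $2$ is forced to have multiplicity one.

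To close, I will rule out each case using the discriminant identity above. In Case (i) the formula reduces to $s^2-4p=n(2(n-1)-(n-2)\mu^2)$: for $\mu=\pm 1$ the discriminant equals $n^2$ and the explicit formula in Proposition~\ref{S3T47} gives $\{\lambda_0,\nu\}=\{\mp(n-1),\pm 1\}$, so $\mu$ coincides with one of $\lambda_0,\nu$ and $S$ has only two distinct eigenvalues; for $|\mu|\geq 3$ the bracket is at most $16-7n<0$ for $n\geq 3$, contradicting reality. Case (ii) is the more delicate step and I expect it to be the main bookkeeping obstacle: integrality of $s=-2\mu/(n-1)$ together with $n-1\equiv 2\ (\mathrm{mod}\ 4)$ and $v_2(\mu)=1$ should force $\mu=(n-1)\ell$ for some odd integer $\ell$, after which the identity collapses to $s^2-4p=4n(1-\ell^2)$. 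This vanishes for $\ell=\pm 1$ (yielding $\lambda_0=\nu$) and is negative for $|\ell|\geq 3$, both contradictions with the assumption of three distinct real eigenvalues, completing the proof.
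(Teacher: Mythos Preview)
Your proposal is correct and follows essentially the same approach as the paper: both arguments normalize to spectrum $\{[\lambda]^{n-d},[\mu]^{2d-n},[\nu]^{n-d}\}$ with $\mu\in\mathbb{Z}$, use Corollary~\ref{S2mC} to force $v_2(\det S)=1$, obtain the identical two-case split $d=n-1$ versus $d=(n+1)/2$, and then show in each case that $\mu$ is so constrained that two eigenvalues coincide. The only cosmetic difference is in the $d=(n+1)/2$ case, where the paper deduces $(n-1)\mid\mu$ from integrality of $p=\lambda\nu$ and finishes with the Gershgorin bound $|\mu|\le n-1$, while you deduce it from integrality of $s=\lambda+\nu$ and finish with nonnegativity of the discriminant; these are interchangeable.
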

\begin{proof}
Assume to the contrary that $S$ is a Seidel matrix of order $n\equiv 3\ (\mathrm{mod}\ 4)$ with exactly three eigenvalues out of which two have the same multiplicity. By considering $-S$ instead, if necessary, $S$ has spectrum $\{[\lambda]^{n-d},[\mu]^{2d-n},[\nu]^{n-d}\}$, where the values are given by Proposition~\ref{S3T47}. By Corollary~\ref{S2mC} we see that $\left(\lambda\nu\right)^{n-d}\mu^{2d-n}\equiv 2\ (\mathrm{mod}\ 4)$, and therefore either $\mu\equiv 2\ (\mathrm{mod}\ 4)$ and $d=(n+1)/2$ or $\mu$ is odd and $d=n-1$. In the first case we find that $\lambda\nu=(n+1)\mu^2/(n-1)^2-n$ and hence $(n-1)$ divides $\mu$ for elementary divisibility reasons. Therefore, by the Gershgorin circle theorem, $\mu^2=(n-1)^2$. In particular $\lambda=\nu$, a contradiction. Otherwise, from the discriminant appearing in Proposition~\ref{S3T47}, we must have $2 (n - 1) (n - d) +  (n - 2 d)\mu^2\geq 0$. Hence $\mu^2=1$, and we find that $(\lambda-\mu)(\nu-\mu)=0$, a contradiction.
\end{proof}
We decide all admissible orders of Seidel matrices with exactly three distinct eigenvalues.
\begin{theorem}\label{vacuous}
There exists a Seidel matrix of order $n\geq 3$ with exactly three distinct eigenvalues if and only if $n\neq 4$ and $n$ is not a prime $p\equiv 3\ (\mathrm{mod}\ 4)$.
\end{theorem}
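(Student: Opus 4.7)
My plan is to prove each direction of the equivalence separately. The nonexistence direction must rule out $n=4$ and primes $p\equiv 3\pmod 4$; the existence direction must exhibit an explicit Seidel matrix with three distinct eigenvalues for every other admissible $n$.

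For $n=4$ I would inspect the three switching classes of Seidel matrices of order $4$ listed in Section~\ref{sec5} and verify by direct computation that none has three distinct eigenvalues. For $n=p$ prime with $p\equiv 3\pmod 4$, suppose for contradiction that $S$ of order $p$ has three distinct eigenvalues $\lambda,\mu,\nu$ of multiplicities $a,b,c$ with $a+b+c=p$. By Corollary~\ref{cubicfff} at least one eigenvalue is rational; if another were irrational, its Galois conjugate would be a distinct eigenvalue of the same multiplicity, violating Lemma~\ref{quadr}. Hence $\lambda,\mu,\nu\in\mathbb{Z}$. Corollary~\ref{S2mC} gives $\det S\equiv 2\pmod 4$, so no eigenvalue vanishes, and each therefore reduces to a nonzero element of $\mathbb{F}_p$ since the spectral radius is at most $p-1<p$. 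Moreover exactly one eigenvalue is even (with multiplicity one, by Theorem~\ref{s2t2}(a)) and the other two are odd.

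The main step is a mod~$p$ linear algebra argument. Reducing the standard equations~\eqref{St1},~\eqref{St2} together with $a+b+c=p$ modulo $p$ produces
\[
a+b+c\equiv 0,\quad a\bar\lambda+b\bar\mu+c\bar\nu\equiv 0,\quad a\bar\lambda^2+b\bar\mu^2+c\bar\nu^2\equiv 0\pmod p.
\]
If $\bar\lambda,\bar\mu,\bar\nu$ are pairwise distinct in $\mathbb{F}_p$, the coefficient matrix is a nonsingular Vandermonde, forcing $a\equiv b\equiv c\equiv 0\pmod p$, which is impossible since each multiplicity lies in $[1,p-1]$. Two eigenvalues coincide mod~$p$ only when their integer difference equals $\pm p$; since the two odd eigenvalues differ by an even integer of absolute value less than $2p$, they cannot coincide, so any coincidence must involve the unique even eigenvalue and one odd one. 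In that case the first two reduced equations collapse to $(a+b)(\bar\lambda-\bar\nu)\equiv 0\pmod p$, forcing $a+b\equiv 0\pmod p$, again impossible since $2\leq a+b\leq p-1$.

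For existence, I would produce an explicit Seidel matrix for every admissible $n$. The symmetric Paley construction handles every prime power $q\equiv 1\pmod 4$ with spectrum $\{[-\sqrt{q}]^{(q-1)/2},[0]^1,[\sqrt{q}]^{(q-1)/2}\}$; the remaining orders (even $n$, and odd composite $n\equiv 3\pmod 4$) are covered by Examples~\ref{rank3g},~\ref{ex415},~\ref{STS19},~\ref{ASCh}, by Proposition~\ref{existv1}, and by block-diagonal extensions that preserve the three-eigenvalue property. The main obstacle I anticipate is organizing the existence constructions so that together they realize every admissible $n$, requiring a case analysis on $n\bmod 4$ and the factorization of $n$; the nonexistence direction is clean once the reduction to integer eigenvalues is in hand.
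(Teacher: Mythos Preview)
Your nonexistence argument for primes $p\equiv 3\pmod 4$ is correct and is essentially the paper's: both first reduce to integer eigenvalues via Corollary~\ref{cubicfff} and Lemma~\ref{quadr}, and then use Corollary~\ref{S2mC} to see that exactly one eigenvalue is even, of multiplicity one. The paper eliminates that eigenvalue from \eqref{St1}--\eqref{St2} to obtain $d(d-1)(\lambda-\nu)^2\equiv 0\pmod p$, whence $\lambda=\nu$ since both are odd and bounded by $p-1$ in absolute value; your Vandermonde-mod-$p$ argument is just a repackaging of the same elimination and reaches the same contradiction.

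The weakness is the existence direction. Your enumeration of ``remaining orders'' already misses odd composite $n\equiv 1\pmod 4$ that are not prime powers (e.g.\ $21$, $33$, $45$); the sporadic Examples~\ref{rank3g}, \ref{ex415}, \ref{STS19}, \ref{ASCh} cover only four specific values of $n$; and ``block-diagonal extensions that preserve the three-eigenvalue property'' is ill-defined, since the direct sum of two Seidel matrices is not a Seidel matrix. The paper bypasses all of this with a single uniform construction: for every composite $n=ab$ with $a\ge 2$ and $b\ge 3$ (hence every composite $n\ne 4$), the matrix $J_a\otimes(J_b-2I_b)+I_{ab}$ is a Seidel matrix of order $n$ with exactly the three eigenvalues $1-2a$, $1$, and $a(b-2)+1$. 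This is nothing more than Proposition~\ref{existv1} applied to the trivial regular two-graph $J-I$, so the tool is already on your list; together with Paley for primes $p\equiv 1\pmod 4$ it settles existence for every admissible $n$ in one line, without any case analysis on $n\bmod 4$.
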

\begin{proof}
The composite case follows from the Seidel matrices $J_a\otimes (J_b-2I_b)+I_{ab}$, apart from order $n=4$, in which case no such Seidel matrix exists. For primes $p\equiv 1\ (\mathrm{mod}\ 4)$ the Paley matrices described above form a family of examples. Therefore our goal is to show that Seidel matrices of prime order $p\equiv 3\ (\mathrm{mod}\ 4)$ having exactly three eigenvalues do not exist. Assume that this is not the case: fix a prime $p\equiv 3\ (\mathrm{mod}\ 4)$ and consider such a Seidel matrix $S$. Note that $S$ has integer eigenvalues by Corollary~\ref{cubicfff} and Lemma~\ref{quadr}. It follows from Corollary~\ref{S2mC} that $S$ has spectrum $\{\left[\lambda\right]^{p-d}, \left[\mu\right]^1,\left[\nu\right]^{d-1}\}$ for some $2\leq d\leq p-1$ with $\mu/2, \lambda$, and $\nu$ all being odd. By eliminating $\mu$ from the standard equations \eqref{St1} and \eqref{St2} (where $n=p$ and $m=1$) we infer that $d(d-1)(\lambda-\nu)^2\equiv 0\ (\mathrm{mod}\ p)$, and hence $\lambda\equiv \nu\ (\mathrm{mod}\ p)$. Since they are both odd and at most $p-1$ by the Gershgorin circle theorem, $\lambda=\nu$ follows, a contradiction.
\end{proof}
\subsection{Constructions and some sporadic examples}\label{subs2}%%%%%%%%%%%%%%%%%%%%%%%%%%%%%%%%%%%%%%%%%%%%%%%
In this section we give various constructions of Seidel matrices with exactly three distinct eigenvalues. As shown in the next result, examples of such Seidel matrices come from connected regular graphs with three or four distinct eigenvalues. Note that connected regular graphs with three eigenvalues are known as \emph{strongly regular graphs} \cite{HB, vDS}.
\begin{example}\label{rank3g}
Let $\Gamma$ be a strongly regular graph with spectrum $\{\left[2\right]^{24},\left[-4\right]^{15},\left[12\right]^1\}$ $($see \cite{TS2}$)$. Then from Lemma~\ref{triww2} we obtain a Seidel matrix with spectrum $\{\left[-5\right]^{24}$, $\left[7\right]^{15}, \left[15\right]^1\}$. The corresponding $40$ lines form the largest known equiangular line system in $\mathbb{R}^{16}$.
\end{example}
The next result is a generalization of \cite[Construction 6.2]{vLS}.
\begin{proposition}\label{existv1}
Let $S$ be a Seidel matrix with spectrum $\{[\lambda_0]^{a-c}, [\lambda_1]^c\}$. Then for all $b\geq 2$ there exists a Seidel matrix with spectrum $\{[1-(1-\lambda_0)b]^{a-c}, [1]^{a(b-1)},$ $[(\lambda_1-1)b+1]^c\}$.
\end{proposition}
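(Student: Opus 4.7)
My plan is to exhibit the desired Seidel matrix explicitly as a tensor-product construction and then read off its spectrum by block-diagonalising over a suitable orthogonal decomposition.

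Concretely, I would set
\[
T \;:=\; (S - I_a)\otimes J_b + I_{ab},
\]
where $J_b$ is the $b\times b$ all-ones matrix. The first thing to check is that $T$ is indeed a Seidel matrix of order $ab$. Symmetry is immediate. For the diagonal, the $(i,i)$-block contributes $(s_{ii}-1)J_b + I_b = -J_b + I_b$, whose diagonal entries are $0$; the off-diagonal blocks contribute $s_{ij}J_b$, which has no effect on the diagonal. For off-diagonal entries within the same block, we get $-1$; for off-diagonal entries in distinct blocks, we get $s_{ij} \in \{\pm 1\}$. So $T$ is a genuine Seidel matrix.

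Next I would compute $\operatorname{spec}(T)$ using the decomposition $\mathbb{R}^{ab} = \mathbb{R}^a \otimes \mathbb{R}^b$ together with $\mathbb{R}^b = \operatorname{span}(\mathbf{1}_b) \oplus \mathbf{1}_b^{\perp}$. On a vector of the form $v\otimes \mathbf{1}_b$ with $Sv = \lambda v$, using $J_b\mathbf{1}_b = b\mathbf{1}_b$ we get
\[
T(v\otimes \mathbf{1}_b) = \bigl(b(\lambda-1)+1\bigr)\,v\otimes \mathbf{1}_b = \bigl(1-(1-\lambda)b\bigr)\,v\otimes\mathbf{1}_b.
\]
Since $S$ has spectrum $\{[\lambda_0]^{a-c},[\lambda_1]^c\}$, this accounts for $a-c$ eigenvectors with eigenvalue $1-(1-\lambda_0)b$ and $c$ eigenvectors with eigenvalue $(\lambda_1-1)b+1$. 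On a vector $v\otimes w$ with $w\in\mathbf{1}_b^{\perp}$ (arbitrary $v\in\mathbb{R}^a$), we have $J_b w = 0$, hence
\[
T(v\otimes w) = v\otimes w,
\]
contributing the eigenvalue $1$ with multiplicity $a(b-1)$. Adding the multiplicities gives $ab$, so we have accounted for all eigenvalues of $T$, and the spectrum matches the statement.

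There is essentially no serious obstacle here: once one hits on the correct Ansatz $T=(S-I_a)\otimes J_b + I_{ab}$, everything is a short tensor-product calculation. The only point that requires a moment of care is the sign in the diagonal correction term, i.e.\ choosing $(S-I_a)\otimes J_b$ rather than $S\otimes J_b + I_a\otimes(J_b - I_b)$ (which would produce the middle eigenvalue $-1$ instead of $+1$); this is forced once we note that the desired middle eigenvalue is $+1$.
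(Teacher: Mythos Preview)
Your construction $T=(S-I_a)\otimes J_b+I_{ab}$ is, up to the order of the tensor factors (hence up to a permutation similarity), exactly the matrix $J_b\otimes(S-I_a)+I_{ab}$ that the paper writes down in its one-line proof; your spectral computation simply spells out what the paper leaves implicit. So your approach is correct and essentially identical to the paper's.
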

\begin{proof}
The Seidel matrix $J_b\otimes (S-I_a)+I_{ab}$ of order $ab$ has the desired spectrum.
\end{proof}
The following, besides being a folklore technical result, serves as another example of Seidel matrices with three distinct eigenvalues.
\begin{lemma}\label{S4L8}
Let $S$ be a Seidel matrix of order $n\geq 2$ with spectrum $\{\left[\lambda_0\right]^{n-d},\left[\lambda_1\right]^d\}$ for some $d$ with $1\leq d\leq n-1$. Let $S'$ be a principal $(n-1)\times (n-1)$ submatrix of $S$. Then the spectrum of $S'$ is $\{\left[\lambda_0\right]^{n-d-1},\left[\lambda_1\right]^{d-1},\left[\lambda_0+\lambda_1\right]^1\}$.
\end{lemma}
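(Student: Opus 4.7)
The plan is to apply Cauchy interlacing (Lemma~\ref{intL}) to locate all but one eigenvalue of $S'$, and then pin down the last one using the fact that Seidel matrices have trace zero.

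Concretely, arrange the eigenvalues of $S$ in nondecreasing order. Assume without loss of generality that $\lambda_0<\lambda_1$ (the case $\lambda_0=\lambda_1$ is degenerate and makes $S$ a scalar matrix, which cannot have zero diagonal for $n\ge 2$). Then the ordered spectrum of $S$ is $\lambda_0=\dots=\lambda_0<\lambda_1=\dots=\lambda_1$, with $\lambda_0$ appearing $n-d$ times and $\lambda_1$ appearing $d$ times. Applying Lemma~\ref{intL} with $m=n-1$ to the eigenvalues $\mu_1\leq\dots\leq\mu_{n-1}$ of $S'$ forces $\mu_i=\lambda_0$ for $1\leq i\leq n-d-1$ (both bounds coincide) and $\mu_i=\lambda_1$ for $n-d+1\leq i\leq n-1$, while the remaining eigenvalue $\mu:=\mu_{n-d}$ satisfies only $\lambda_0\leq\mu\leq\lambda_1$.

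Now I use the trace. Since $S'$ is a principal submatrix of the Seidel matrix $S$, it is itself a Seidel matrix, and in particular $\mathrm{tr}\,S'=0$. Hence
\[
(n-d-1)\lambda_0+(d-1)\lambda_1+\mu=0.
\]
Combining this with $\mathrm{tr}\,S=(n-d)\lambda_0+d\lambda_1=0$ gives $\mu=\lambda_0+\lambda_1$, which lies in $[\lambda_0,\lambda_1]$ since $\lambda_0\leq 0\leq\lambda_1$ (their sum of multiplicities-weighted values is zero, so opposite signs), confirming consistency with the interlacing bounds.

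There is essentially no obstacle here beyond bookkeeping; the only subtlety is remembering that Cauchy interlacing already collapses all but one eigenvalue onto $\{\lambda_0,\lambda_1\}$, so the trace equation determines the last one uniquely, independently of which principal submatrix was chosen. This also explains the terminology ``star complement'' style behavior implicit in the lemma: deleting any vertex always produces the same extra eigenvalue $\lambda_0+\lambda_1$.
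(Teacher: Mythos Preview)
Your proof is correct and follows exactly the same route as the paper's own argument: Cauchy interlacing forces the spectrum of $S'$ to be $\{[\lambda_0]^{n-d-1},[\lambda_1]^{d-1},[x]^1\}$ with $\lambda_0\le x\le\lambda_1$, and then $\mathrm{tr}\,S=\mathrm{tr}\,S'=0$ gives $x=\lambda_0+\lambda_1$. The additional remarks you include (non-degeneracy of $\lambda_0\neq\lambda_1$, consistency check that $\lambda_0+\lambda_1\in[\lambda_0,\lambda_1]$) are harmless elaborations but not needed.
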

\begin{proof}
By interlacing (see Lemma~\ref{intL}) $S'$ has spectrum $\{\left[\lambda_0\right]^{n-d-1},\left[\lambda_1\right]^{d-1},\left[x\right]^1\}$ for some $x$ with $\lambda_0\leq x\leq \lambda_1$. Since $\mathrm{tr}S=\mathrm{tr}S'=0$, it follows that $x=\lambda_0+\lambda_1$, as claimed.
\end{proof}
\begin{proposition}\label{S4P9}
Let $S$ be a Seidel matrix of order $n\geq 2$ with spectrum $\{\left[\lambda_0\right]^{n-d}$, $\left[\lambda_1\right]^d\}$ for some $d$ with $1\leq d\leq n-1$ and assume that for some $c$ with $1\leq c\leq d$ it admits the block partition $S=\left[\begin{smallmatrix}
J_c-I_c & \ast\\
\ast & S'
\end{smallmatrix}\right]$. Then $\mathrm{tr}\left((S')^3\right)=(n-3c)(n-1)(\lambda_0+\lambda_1)+c(c-1)(3\lambda_0+3\lambda_1-c+2)$.
\end{proposition}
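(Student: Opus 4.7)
The plan is to exploit the strong two-eigenvalue structure of $S$: since the minimal polynomial of $S$ factors as $(x-\lambda_0)(x-\lambda_1)$, we have $S^2=(\lambda_0+\lambda_1)S-\lambda_0\lambda_1 I$, and since the diagonal entries of $S^2$ equal $n-1$ (off-diagonals are $\pm1$), we obtain $\lambda_0\lambda_1=-(n-1)$ and hence the clean identity $S^2=(\lambda_0+\lambda_1)S+(n-1)I$. Cubing and taking traces yields $\mathrm{tr}(S^3)=n(n-1)(\lambda_0+\lambda_1)$.

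Next I would write $S=\left[\begin{smallmatrix}T & B\\ B^T & S'\end{smallmatrix}\right]$ with $T=J_c-I_c$ and expand the lower-right block of $S^3$ to obtain
\[
\mathrm{tr}(S^3)=\mathrm{tr}(T^3)+3\,\mathrm{tr}(T B B^T)+3\,\mathrm{tr}(BS'B^T)+\mathrm{tr}((S')^3),
\]
using cyclic invariance of the trace. Solving for $\mathrm{tr}((S')^3)$ reduces the problem to computing three auxiliary traces.

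Each of these three traces is forced by the block decomposition of $S^2=(\lambda_0+\lambda_1)S+(n-1)I$. The $(1,1)$-block gives $BB^T=(\lambda_0+\lambda_1-c+2)J_c+(n-\lambda_0-\lambda_1-2)I_c$ after using $T^2=(c-2)J_c+I_c$; the $(1,2)$-block gives the key relation $BS'=(\lambda_0+\lambda_1)B-TB$. From these I would read off $\mathrm{tr}(T^3)=c(c-1)(c-2)$, $\mathrm{tr}(TBB^T)=c(c-1)(\lambda_0+\lambda_1-c+2)$, and $\mathrm{tr}(BS'B^T)=(\lambda_0+\lambda_1)\mathrm{tr}(BB^T)-\mathrm{tr}(TBB^T)=(\lambda_0+\lambda_1)c(n-c)-c(c-1)(\lambda_0+\lambda_1-c+2)$.

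Substituting and collecting terms, the contributions involving $(\lambda_0+\lambda_1-c+2)$ cancel pairwise (the $3\,\mathrm{tr}(TBB^T)$ term cancels against part of the $3\,\mathrm{tr}(BS'B^T)$ term), leaving $(\lambda_0+\lambda_1)[n(n-1)-3c(n-c)]-c(c-1)(c-2)$. The only remaining work is the algebraic identity
\[
n(n-1)-3c(n-c)=(n-3c)(n-1)+3c(c-1),
\]
which repackages the expression into the claimed form $(n-3c)(n-1)(\lambda_0+\lambda_1)+c(c-1)(3\lambda_0+3\lambda_1-c+2)$. There is no real conceptual obstacle here; the only thing to watch carefully is the bookkeeping in the four-term expansion of the lower-right block of $S^3$ and the cancellation pattern in the final simplification.
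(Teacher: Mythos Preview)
Your argument is correct. One small wording quibble: the identity
\[
\mathrm{tr}(S^3)=\mathrm{tr}(T^3)+3\,\mathrm{tr}(TBB^T)+3\,\mathrm{tr}(BS'B^T)+\mathrm{tr}((S')^3)
\]
comes from summing the traces of \emph{both} diagonal blocks of $S^3$ (or, equivalently, from splitting the triple sum $\sum_{i,j,k}s_{ij}s_{jk}s_{ki}$ by how many indices lie in $\{1,\dots,c\}$), not just the lower-right block. But the formula itself and everything you deduce from it is right.

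Your route and the paper's share the same two pillars: the quadratic relation $S^2=(\lambda_0+\lambda_1)S+(n-1)I$ and a decomposition of $\mathrm{tr}(S^3)$ according to the block partition. The execution differs. The paper phrases the decomposition combinatorially, writing $\mathrm{tr}(S^3)=3\sum_M\det M$ as a sum over $3\times 3$ principal minors and partitioning these minors by how many of their rows meet the first $c$; it then evaluates the pieces by counting vertical sign-patterns in pairs of rows and by tallying signed closed $3$-walks. It also treats $c\le 2$ separately. Your approach stays purely linear-algebraic: you read off $BB^T$ and $BS'$ directly from the $(1,1)$- and $(1,2)$-blocks of the identity $S^2=(\lambda_0+\lambda_1)S+(n-1)I$, and the three auxiliary traces fall out immediately. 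This is cleaner and uniform in $c$; the paper's version, on the other hand, makes the connection to the underlying two-graph combinatorics (minors, walks) more visible, which is in keeping with how the result is used later. Either way the algebra closes up to the same expression.
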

\begin{proof}
For $c=1$ the spectrum is known by Lemma~\ref{S4L8}, while for $c=2$ it can be determined from the standard equations \eqref{St1} and \eqref{St2}, and hence the claim follows in these cases. Therefore we assume that $c\geq 3$ and compare $\mathrm{tr}((S')^3)$ with $\mathrm{tr}(S^3)$ to find:
\begin{equation}\label{origTr}
\begin{split}\mathrm{tr}(S^3)&=3\sum_{M\in\mathcal{M}}\mathrm{det}M\\
&=\mathrm{tr}\left((S')^3\right)+3\sum_{M\in\mathcal{M}_1}\mathrm{det}M+3\sum_{M\in\mathcal{M}_2}\mathrm{det}M+3\sum_{M\in\mathcal{M}_3}\mathrm{det}M,
\end{split}
\end{equation}
where $\mathcal{M}$ is the subset of all $3\times 3$ principal submatrices of $S$, while $\mathcal{M}_i$, $i=1,2,3$ are those subsets of $\mathcal{M}$ whose members intersect the first $c$ rows of $S$ in exactly $i$ places. We calculate these sums separately.

The last term is straightforward, as we have $\binom{c}{3}$ minors within the first $c\times c$ submatrix of $S$, each contributing the value $2$ to the sum:
\beql{x1x}\sum_{M\in\mathcal{M}_3}\mathrm{det}M=2\binom{c}{3}.\eeq

The value of the sum over $\mathcal{M}_2$ can be obtained by evaluating the inner products of the rows of $S$, $\left\langle S_i,S_j\right\rangle$, via two different ways. First, since $S$ has two eigenvalues only, it satisfies 
\beql{ggg}S^2=(\lambda_0+\lambda_1)S+(n-1)I\eeq
and consequently for every $i$ and $j$ with $1\leq i<j\leq c$ we have $\left\langle S_i,S_j\right\rangle=\left[S^2\right]_{ij}=\lambda_0+\lambda_1$. Second, we count the number of vertical pairs $[1,1]^T$, $[1,-1]^T$, $[-1,1]^T$ and $[-1,-1]^T$ within the $i$th and $j$th rows of $S$, which we denote by $w_{i,j}$, $x_{i,j}$, $y_{i,j}$, and $z_{i,j}$, respectively. It follows that $\left\langle S_i,S_j\right\rangle=w_{i,j}-x_{i,j}-y_{i,j}+z_{i,j}$. By combining these two equations for the inner products we find that the quantity $w_{i,j}-x_{i,j}-y_{i,j}+z_{i,j}$ does not depend on the subscripts, and hence
\beql{fff}\sum_{M\in\mathcal{M}_2}\mathrm{det}M=2\sum_{\substack{i,j=1\\ i<j}}^c\left(w_{i,j}-x_{i,j}-y_{i,j}+z_{i,j}-(c-2)\right)=c(c-1)(\lambda_0+\lambda_1-c+2).\eeq

The value of the final sum can be obtained by summing up the signed closed $3$-walks originating from the first $c$ vertices of $S$ in two ways. Firstly, by counting, double counting, and triple counting, we have
\begin{align*}\sum_{i=1}^c\left[S^3\right]_{ii}&=\sum_{i=1}^c\sum_{k,l}s_{ik}s_{kl}s_{li}=\sum_{k,l}s_{kl}\sum_{i=1}^cs_{ik}s_{li}\\
&=\sum_{M\in\mathcal{M}_1}\mathrm{det}M+2\sum_{M\in\mathcal{M}_2}\mathrm{det}M+3\sum_{M\in\mathcal{M}_3}\mathrm{det}M.
\end{align*}
Secondly, we know from \eqref{ggg} that $\left[S^3\right]_{ii}=(n-1)(\lambda_0+\lambda_1)$ for all $1\leq i<j\leq n$ and hence
\[\sum_{i=1}^c\left[S^3\right]_{ii}=c(n-1)(\lambda_0+\lambda_1).\]
Comparing these two we find that
\beql{fin}\sum_{M\in\mathcal{M}_1}\mathrm{det}M=c(n-1)(\lambda_0+\lambda_1)-2\sum_{M\in\mathcal{M}_2}\mathrm{det}M-3\sum_{M\in\mathcal{M}_3}\mathrm{det}M,\eeq
and therefore combining \eqref{x1x}, \eqref{fff}, and \eqref{fin} with \eqref{origTr} yields
\begin{align*}\mathrm{tr}\left((S')^3\right)&=\mathrm{tr}(S^3)-3c(n-1)(\lambda_0+\lambda_1)+3\sum_{M\in\mathcal{M}_2}\mathrm{det}M+6\sum_{M\in\mathcal{M}_3}\mathrm{det}M\\
&=(n-3c)(n-1)(\lambda_0+\lambda_1)+c(c-1)(3\lambda_0+3\lambda_1-c+2).\qedhere
\end{align*}
\end{proof}
Now we can generalize Lemma~\ref{S4L8} as follows.
\begin{proposition}\label{newPP}
Let $S$ be a Seidel matrix of order $n\geq 2$ with spectrum $\{\left[\lambda_0\right]^{n-d}$, $\left[\lambda_1\right]^d\}$ for some $d$ with $1\leq d\leq n-1$ and assume that for some $c$ with $1\leq c\leq\min\{d,n-d\}$ it admits the block partition $S=\left[\begin{smallmatrix}
J_c-I_c & \ast\\
\ast & S'
\end{smallmatrix}\right]$. Then the spectrum of $S'$ is $\{\left[\lambda_0\right]^{n-d-c},\left[\lambda_1\right]^{d-c},\left[\lambda_0+\lambda_1+1-c\right]^1,\left[\lambda_0+\lambda_1+1\right]^{c-1}\}$.
\end{proposition}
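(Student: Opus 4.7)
The plan is to extract two key algebraic identities from the spectral hypothesis and use them to decompose $\mathbb{R}^{n-c}$ into two $S'$-invariant subspaces whose spectra are computable. Writing the block decomposition $S=\bigl[\begin{smallmatrix} J_c-I_c & B\\ B^T & S'\end{smallmatrix}\bigr]$ for an appropriate $c\times(n-c)$ matrix $B$, and setting $t=\lambda_0+\lambda_1$, the standard equations \eqref{St1}--\eqref{St2} give $\lambda_0\lambda_1=-(n-1)$ and hence $S^2=tS+(n-1)I$. Expanding this in blocks and comparing the $(1,1)$, $(1,2)$, and $(2,2)$ entries yields
\begin{gather*}
BB^T=(n-2-t)I_c+(t+2-c)J_c,\\
S'B^T=B^T\bigl((t+1)I_c-J_c\bigr),\qquad (S')^2=tS'+(n-1)I_{n-c}-B^TB.
\end{gather*}

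Next I would set $N=\ker B\subseteq\mathbb{R}^{n-c}$. The intertwining gives $B(S'v)=\bigl((t+1)I-J_c\bigr)(Bv)=0$ for every $v\in N$, so $N$ is $S'$-invariant, and on $N$ the third identity reduces to $(S')^2=tS'+(n-1)I$; hence the spectrum of $S'|_N$ is contained in $\{\lambda_0,\lambda_1\}$. Symmetry of $S'$ makes $N^\perp=\mathrm{Im}(B^T)$ also $S'$-invariant, and the intertwining shows that $B^T$ carries the action of $(t+1)I_c-J_c$ on $(\ker B^T)^\perp$ to that of $S'$ on $N^\perp$. Since the spectrum of $(t+1)I_c-J_c$ on $\mathbb{R}^c$ is $\{[t+1]^{c-1},[t+1-c]^1\}$ with $\mathbf{1}_c$ spanning the eigenspace for $t+1-c$, the spectrum of $S'|_{N^\perp}$ consists of (some of) these eigenvalues.

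In the generic case $BB^T$ is invertible, so $B^T$ is injective, $\dim N^\perp=c$, $\dim N=n-2c$, and $S'|_{N^\perp}$ inherits exactly the spectrum $\{[t+1]^{c-1},[t+1-c]^1\}$. Cauchy interlacing (Lemma~\ref{intL}) forces the multiplicities of $\lambda_0$ and $\lambda_1$ in $S'$ to be at least $n-d-c$ and $d-c$ respectively; summing these bounds with the $c$ eigenvalues from $N^\perp$ already exhausts the dimension $n-c$, so the interlacing bounds are tight and the spectrum of $S'$ is as claimed.

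The main obstacle is the degenerate case when $BB^T$ is singular, i.e., when $n-2-t=0$ or $(c-1)t+n-2-c(c-2)=0$. A short computation shows that in the first case $t+1=\lambda_1$, and in the second $t+1-c$ equals one of $\lambda_0,\lambda_1$; in either scenario an eigenvalue that ``drops out'' of $N^\perp$ is absorbed into the multiplicity of $\lambda_0$ or $\lambda_1$ coming from $N$, so the multiset equality asserted by the proposition still holds. I would conclude by verifying these coincidences case by case.
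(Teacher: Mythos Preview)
Your approach is correct and genuinely different from the paper's. The paper proceeds by induction on $c$: after handling $c\le 2$ directly, it removes one further row and column from the matrix covered by the inductive hypothesis, uses interlacing to pin down all but three eigenvalues of $S'$, and then solves for those three from $\mathrm{tr}(S')$, $\mathrm{tr}((S')^2)$, and $\mathrm{tr}((S')^3)$ --- the last of these being supplied by a separate combinatorial lemma (Proposition~\ref{S4P9}) that evaluates sums of signed $3\times 3$ principal minors. Your route avoids both the induction and the cubic trace by exploiting the quadratic minimal polynomial $S^2=tS+(n-1)I$ directly; the block identities and the resulting $S'$-invariant splitting $\ker B\oplus\mathrm{Im}\,B^T$ make the argument conceptually transparent and essentially a single step. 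What the paper's method buys is that Proposition~\ref{S4P9} is of independent interest; what yours buys is that no auxiliary trace computation is needed at all.

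Two small remarks on your degenerate analysis. First, your ``first case'' $n-2-t=0$ is in fact vacuous under the hypothesis $c\le\min\{d,n-d\}$: it forces $\{\lambda_0,\lambda_1\}=\{-1,n-1\}$, hence $\min\{d,n-d\}=1$ and $c=1$, but for $c=1$ the scalar $BB^T$ equals $n-1\ne 0$. Second, in the genuine degenerate case (your second condition, which you correctly identify as equivalent to $t+1-c\in\{\lambda_0,\lambda_1\}$), one has $\ker B^T=\mathrm{span}\{\mathbf 1_c\}$, so $\dim N=n-2c+1$ and interlacing leaves one unit of slack in the multiplicities of $\lambda_0,\lambda_1$ on $N$; a single scalar equation --- $\mathrm{tr}(S')=0$ already suffices --- is still needed to close the count. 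This is routine, but should be stated rather than deferred to ``case by case''.
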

\begin{rem}\label{r33}
If $J_c-I_c$ is a principal submatrix of a Seidel matrix $S$ of order $n\geq2$ with largest eigenvalue $\lambda_1$, then $c\leq\min\{d,n-d\}$ unless $S$ is switching equivalent to $J-I$. Another bound, $c\leq \lambda_1+1$ follows from interlacing.  
\end{rem}
\begin{proof}
We prove by induction on $c$. For $c=1$ the claim follows from Lemma~\ref{S4L8} and, for $c=2$, it is easy to determine the spectrum from the standard equations \eqref{St1} and \eqref{St2}. Therefore we assume that $c\geq 3$. By removing the first $c-1$ rows and columns from $S$ we obtain, by our induction hypothesis, a Seidel matrix whose spectrum is the following: 
$\{\left[\lambda_0\right]^{n-d-c+1},\left[\lambda_1\right]^{d-c+1},\left[\lambda_0+\lambda_1+2-c\right]^1,\left[\lambda_0+\lambda_1+1\right]^{c-2}\}$, where all the multiplicities are at least $1$. We now remove the $c$th row and column from $S$ as well and use interlacing to find that the spectrum of $S'$ reads $\{\left[\lambda_0\right]^{n-d-c},\left[\lambda_1\right]^{d-c},\left[\lambda_0+\lambda_1+1\right]^{c-3},\left[x\right]^1,\left[y\right]^1,\left[z\right]^1\}$ for some real numbers $x$, $y$, and $z$. By the standard equations and Proposition~\ref{S4P9} we find that
\begin{align*}
\mathrm{tr}(S')&=(n-d-c)\lambda_0+(d-c)\lambda_1+(c-3)(\lambda_0+\lambda_1+1)+x+y+z=0,\\
\mathrm{tr}\left((S')^2\right)&=(n-d-c)\lambda_0^2+(d-c)\lambda_1^2+(c-3)(\lambda_0+\lambda_1+1)^2+x^2+y^2+z^2\\
&=(n-c)(n-c-1),\\
\mathrm{tr}\left((S')^3\right)&=(n-d-c)\lambda_0^3+(d-c)\lambda_1^3+(c-3)(\lambda_0+\lambda_1+1)^3+x^3+y^3+z^3\\
&=(n-3c)(n-1)(\lambda_0+\lambda_1)+c(c-1)(3\lambda_0+3\lambda_1-c+2).
\end{align*}
Now by noting that $\lambda_1=-(n-d)\lambda_0/d$ and $\lambda_0^2=d(n-1)/(n-d)$ it is immediate to check that $\{x,y,z\}=\{\lambda_0+\lambda_1+1-c,\lambda_0+\lambda_1+1,\lambda_0+\lambda_1+1\}$ (in any order) is a solution. Since the elementary symmetric polynomials $x+y+z$, $xy+yz+zx$ and $xyz$ can be expressed using the equations above, it follows that this is the only solution.
\end{proof}
The following is the main result of this section.
\begin{theorem}[cf.~\mbox{\cite[Lemma~2.1]{HxD}}]\label{S4MC}
Let $S$ be a Seidel matrix of order $n\geq 2$ with spectrum $\{\left[\lambda_0\right]^{n-d}$, $\left[\lambda_1\right]^{d}\}$ with $\lambda_1\leq\min\{d-1,n-d-1\}$ and assume that it admits the block partition $S=\left[\begin{smallmatrix}
J_{\lambda_1+1}-I_{\lambda_1+1} & \ast\\
\ast & S'
\end{smallmatrix}\right]$. Then the spectrum of $S'$ is $\{\left[\lambda_0\right]^{n-d-\lambda_1}$, $\left[\lambda_1\right]^{d-\lambda_1-1}$, $\left[\lambda_0+\lambda_1+1\right]^{\lambda_1}\}$.
\end{theorem}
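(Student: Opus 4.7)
The plan is to recognize that Theorem \ref{S4MC} is exactly the boundary case $c = \lambda_1 + 1$ of Proposition \ref{newPP}, and then to read off the spectrum by observing that a specific coalescence of eigenvalues occurs at this boundary. The main work has already been done in Proposition \ref{newPP}, so no fresh trace calculation is needed.

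First I would verify that the hypotheses of Proposition \ref{newPP} are satisfied with $c = \lambda_1 + 1$. Setting $c = \lambda_1+1$, the assumption $\lambda_1 \leq \min\{d-1, n-d-1\}$ rewrites as $c \leq \min\{d, n-d\}$, which is precisely the range of $c$ allowed in Proposition \ref{newPP}. (One also needs $c \geq 1$, i.e.\ $\lambda_1 \geq 0$; this is automatic since $S \neq 0$ has trace zero and two distinct eigenvalues, forcing $\lambda_0 < 0 < \lambda_1$.) The block shape $S = \left[\begin{smallmatrix} J_c - I_c & \ast \\ \ast & S' \end{smallmatrix}\right]$ is assumed in the hypothesis of Theorem \ref{S4MC}.

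Next I would simply substitute $c = \lambda_1+1$ into the spectrum of $S'$ given by Proposition \ref{newPP}, namely
\[
\{[\lambda_0]^{n-d-c},\, [\lambda_1]^{d-c},\, [\lambda_0+\lambda_1+1-c]^{1},\, [\lambda_0+\lambda_1+1]^{c-1}\}.
\]
The key observation is that $\lambda_0 + \lambda_1 + 1 - c = \lambda_0 + \lambda_1 + 1 - (\lambda_1+1) = \lambda_0$, so the isolated eigenvalue of multiplicity $1$ merges with the block of $\lambda_0$'s. This raises the multiplicity of $\lambda_0$ from $n-d-c = n-d-\lambda_1-1$ to $n-d-\lambda_1$, while leaving the multiplicities of $\lambda_1$ and $\lambda_0+\lambda_1+1$ equal to $d-\lambda_1-1$ and $\lambda_1$, respectively. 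This gives exactly the spectrum claimed in the theorem.

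There is essentially no obstacle; the argument is purely a simplification of Proposition \ref{newPP} at its extremal value of $c$. The only point requiring even minimal care is the bookkeeping of multiplicities after the coalescence, and the verification that $c=\lambda_1+1$ falls within the admissible range for $c$. The content of Theorem \ref{S4MC} is thus the recognition that at this extreme value of $c$ the four-eigenvalue spectrum of $S'$ degenerates to a three-eigenvalue spectrum, which is what makes this result relevant to the study of Seidel matrices with exactly three distinct eigenvalues.
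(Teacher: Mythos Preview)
Your proposal is correct and follows exactly the same approach as the paper: set $c=\lambda_1+1$ in Proposition~\ref{newPP} and observe that $\lambda_0+\lambda_1+1-c=\lambda_0$, so the isolated eigenvalue merges with the $\lambda_0$-block to give the stated three-eigenvalue spectrum. Your write-up is in fact more detailed than the paper's one-line proof, but the content is identical.
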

\begin{proof}
Setting $c=\lambda_1+1$ in Proposition~\ref{newPP} and observing that in this case $\lambda_0+\lambda_1+1-c=\lambda_0$ we obtain the desired result.
\end{proof}
Note that Theorem~\ref{S4MC} describes a method to obtain equiangular lines in a smaller dimensional space by removing a maximal independent set (see Remark~\ref{r33}) from the underlying graph. One consequence is the following result of Tremain.
\begin{example}[cf.\ \cite{JCT}]\label{ex415}
We construct $28$ equiangular lines in $\mathbb{R}^{14}$ with common angle $1/5$ from a regular two-graph on $36$ vertices. Consider the $35\times 35$ matrix $S$ in \cite[p.~ 75]{H36}. Let $S'$ be constructed from $-S$ by extending it with a $36$th row and column with all entries equal to $1$ except for $S'_{36,36}=0$. Upon removing rows $\{1, 6, 11, 17, 20, 33, 34, 36\}$ along with the same set of columns from $S'$ we find, by Theorem~\ref{S4MC}, that the spectrum of the resulting Seidel matrix of order $28$ is $\{\left[-5\right]^{14},\left[3\right]^7,\left[7\right]^7\}$. Additional inequivalent examples might be obtained from other regular two-graphs on $36$ vertices; these have been classified in \cite{MSP}.
\end{example}
Next we discuss some relevant, sporadic examples of equiangular line systems.
\begin{example}[see \cite{LS}]\label{STS19}
Let $\mathcal{B}:=\{\{4^i+j,7\cdot4^i+j,11\cdot 4^i+j\}\ (\mathrm{mod}\ 19)\colon 0\leq i\leq 2, 0\leq j\leq 18\}$ be the set of blocks of the Netto triple system on the ground set $X=\{1,2,\hdots,19\}$. Let $e_i$ denote the standard basis vectors in $\mathbb{R}^{19}$ for $1\leq i\leq 19$ and for a subset $T\subseteq X$ let us denote $e_T:=\sum_{i\in T}e_i$. The vectors $v_B:=(6e_B+e_1-e_X)/\sqrt{90}$ for which $1\notin B\in\mathcal{B}$ are all orthogonal to both $e_1$ and $e_X$ and form an equiangular line system of $48$ lines in $\mathbb{R}^{17}$. Moreover, the corresponding Seidel matrix has spectrum $\{\left[-5\right]^{31}, \left[7\right]^{8}, \left[11\right]^9\}$.
\end{example}
The known maximal set of equiangular lines in dimensions $19$--$23$ all come from the Witt design. The examples in dimension $21$, $22$, and $23$ are regular two-graphs; Taylor's example \cite{TTH} in dimension $20$ has spectrum $\{[-5]^{70},[15]^9,[19]^{10},[25]^1\}$; while the following construction, discovered by Asche, leads to a Seidel matrix with three distinct eigenvalues in dimension $19$.
\begin{example}[see \mbox{\cite[p.~ 124]{TTH}}]\label{ASCh}
Let $\mathcal{B}$ be the set of $759$ blocks of the Witt design, the ``octads'', defined on the ground set $X=\{1,2,\hdots,24\}$. Let $e_i$ denote the standard basis vectors in $\mathbb{R}^{24}$ for $1\leq i\leq 24$ and for a subset $T\subseteq X$ let us denote $e_T:=\sum_{i\in T}e_i$. Let $B_1, B_2\in\mathcal{B}$ such that $1\notin B_1, B_2$ and $B_1\cap B_2=\{2,3\}$. The vectors $v_B:=(4e_B-4e_1-e_X)/\sqrt{80}$ for which $1\in B\in\mathcal{B}$ are all orthogonal to $4e_1+e_X$. Those, which in addition are orthogonal to all of $e_1-e_2$, $e_1-e_3$, $v_{B_1}$, and $v_{B_2}$, form an equiangular line system of $72$ lines in $\mathbb{R}^{19}$. Moreover, the corresponding Seidel matrix has spectrum $\{\left[-5\right]^{53}, \left[13\right]^{16}, \left[19\right]^3\}$.
\end{example}%
%We remark here that the line systems described by Examples \ref{rank3g}, \ref{ex415}, \ref{STS19}, and \ref{ASCh} each cannot be extended with further vectors.
%
The following observation nicely connects the existence of some hypothetical strongly regular graphs on $76$ and $96$ vertices, corresponding to some of the upper bounds in Table \ref{xyz}.
\begin{rem}\label{S4C2}
One can see the following by repeated application of Theorem~\ref{S4MC}. Assume that there exists a Seidel matrix of order $96$ with spectrum $\{\left[-5\right]^{76},\left[19\right]^{20}\}$ containing a $J_{20}-I_{20}$ principal minor (see Remark~\ref{r33}). Then, there exists a Seidel matrix of order $76$ with spectrum $\{\left[-5\right]^{57},\left[15\right]^{19}\}$. If, in addition, this latter matrix contains a $J_{16}-I_{16}$ principal minor, then there further exists a Seidel matrix of order $60$ with spectrum $\{\left[-5\right]^{42},\left[11\right]^{15},\left[15\right]^{3}\}$. These examples would improve upon the best known lower bounds on the number of equiangular lines in dimensions $20$, $19$, and $18$. Regular two-graphs with maximal cliques were considered in \cite{GMS}.
\end{rem}
In Table \ref{uptotw} we display the number of inequivalent Seidel matrices with exactly three distinct eigenvalues up to order $n\leq 12$ (cf.\ Theorem~\ref{vacuous} and the results of Section~\ref{sec5}).
\[\begin{array}{c|cccccccccc}
n & 3 & 4 & 5 & 6 & 7 & 8 & 9 & 10 & 11 & 12\\
\hline
  & 0 & 0 & 1 & 2 & 0 & 2 & 3 &  4 & 0  & 10
\end{array}\]
\begingroup
\captionof{table}{The number of Seidel matrices of order $n$ with exactly three distinct eigenvalues, up to switching equivalence.}\label{uptotw}
\endgroup%Table \ref{uptotw} is a corollary of our computational results described in Section \ref{sec5}.
\subsection{A nonexistence result}\label{subs3}%%%%%%%%%%%%%%%%%%%%%%%%%%%%%%%%%%%%%%%%%%%%%%%%%%%%%%%%%%%%
Let $\lambda_0<\lambda_1\leq \lambda_2\leq \dots\leq\lambda_{d}$ be the eigenvalues of a Seidel matrix of order $n$, and let $\mu\neq\lambda_0$ be an integer. In the following theorem we estimate the quantity $\sum_{i=1}^d(\lambda_i-\mu)^2$ in a nonstandard way (cf.\ Theorem~\ref{3g}, where we have obtained an upper bound on $|\mu+\lambda_0(n-d)/d|$), and prove that in case of equality the corresponding Seidel matrix must have at most four distinct eigenvalues. This provides us with a crucial insight into the structure of certain Seidel matrices which we can further analyze in detail.
\begin{theorem}[cf.\ \mbox{Theorem~\ref{3g}}]\label{maintnon}
Let $d\geq 1$, let $S$ be a Seidel matrix of order $n\geq 2$ with smallest eigenvalue $\lambda_0$ of multiplicity $n-d\geq 1$ and let $\mu\neq \lambda_0$ be an integer satisfying $\mathrm{dim}(\mathrm{ker}\left(S-\mu I\right))=m$. Assume further that $\lambda_0\leq-\sqrt{d(n^2-n+m-d)/(n^2-dn)}$. Then
\begin{equation}\label{superduper}
\left|\mu+\frac{\lambda_0(n-d)}{d}\right|\geq\frac{\sqrt{d^2-d \left(m+n \left(\lambda_0^2+n-1\right)\right)+\lambda_0^2
   n^2}}{d},
\end{equation} with equality if and only if $S$ has spectrum $\{[\lambda_0]^{n-d},[\mu-1]^w,[\mu]^m,[\mu+1]^{d-m-w}\}$, where $w=((n-d)\lambda_0+d\mu+d-m)/2$.
\end{theorem}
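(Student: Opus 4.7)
The plan is to reduce~\eqref{superduper} to a clean eigenvalue statement and then prove that statement via an AM--GM argument driven by the integrality of $\chi_S(x)$. Let $\lambda_1,\ldots,\lambda_{d-m}$ denote the eigenvalues of $S$ apart from the $n-d$ copies of $\lambda_0$ and the $m$ copies of $\mu$, and set $s := d\mu + (n-d)\lambda_0$, so the left-hand side of~\eqref{superduper} equals $|s|/d$. Using the identities $\mathrm{tr}(S)=0$ and $\mathrm{tr}(S^2)=n(n-1)$, a direct expansion gives
$q := \sum_{i=1}^{d-m}(\mu-\lambda_i)^2 = d\mu^2 + 2(n-d)\mu\lambda_0 - (n-d)\lambda_0^2 + n(n-1)$, and a further manipulation yields the key identity $s^2 - L = d\bigl(q - (d-m)\bigr)$, where $L$ denotes the radicand on the right-hand side of~\eqref{superduper}. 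Hence~\eqref{superduper} is equivalent to $q \geq d-m$, with equality corresponding to $q = d-m$.

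To prove $q \geq d-m$, recall that by Gauss's lemma $h(x) := \chi_S(x)/(x-\mu)^m$ lies in $\mathbb{Z}[x]$. In the principal case $\lambda_0 \in \mathbb{Z}$, one further divides to obtain $R(x) := h(x)/(x-\lambda_0)^{n-d} = \prod_{i=1}^{d-m}(x-\lambda_i) \in \mathbb{Z}[x]$; since $\mu$ is not a root of $R$, the integer $R(\mu)$ is nonzero, giving $|R(\mu)| \geq 1$. Applying AM--GM to the positive reals $(\mu-\lambda_i)^2$,
\[
\frac{q}{d-m} \;\geq\; \Bigl(\textstyle\prod_{i=1}^{d-m}(\mu-\lambda_i)^2\Bigr)^{1/(d-m)} \;=\; |R(\mu)|^{2/(d-m)} \;\geq\; 1,
\]
which yields the desired bound. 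For equality, both inequalities must be tight: AM--GM forces every $(\mu-\lambda_i)^2$ to share a common value $c$, and $|R(\mu)| = c^{(d-m)/2} = 1$ then forces $c = 1$, so each $\lambda_i \in \{\mu-1, \mu+1\}$. Writing $w$ for the multiplicity of $\mu-1$, the identity $\sum_i(\mu-\lambda_i) = 2w - (d-m) = s$ determines $w = ((n-d)\lambda_0 + d\mu + d-m)/2$, producing exactly the spectrum claimed; conversely such a spectrum trivially achieves equality.

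The principal obstacle is the case of irrational $\lambda_0$, for which $h(x)/(x-\lambda_0)^{n-d}$ is not a priori in $\mathbb{Z}[x]$. Galois theory forces each conjugate of $\lambda_0$ to appear as an eigenvalue of $S$ with the same multiplicity $n-d$, so that the minimal polynomial $p_0 \in \mathbb{Z}[x]$ of $\lambda_0$ (of degree $r_0 \geq 2$) divides $h$ to the power $n-d$ in $\mathbb{Z}[x]$. An orbit-wise AM--GM argument---applied to each irreducible factor of $h/p_0^{n-d}$ (with the bound $|p_j(\mu)|\geq 1$ on each) and to the $\lambda_0$-orbit contribution, using the integrality of $T_0 := \sum_{j=1}^{r_0}(\mu-\lambda_0^{(j)})^2$ together with $|p_0(\mu)| \geq 1$---delivers $q \geq d-m$ in general. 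The hypothesis $\lambda_0^2 \geq d(n^2-n+m-d)/(n(n-d))$ ensures that $L \geq 0$ so the bound is non-trivial, and via the requirement that $w$ be an integer it also rules out equality in the irrational case, since the equality formula demands $(n-d)\lambda_0 \in \mathbb{Z}$.
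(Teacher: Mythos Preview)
Your approach coincides with the paper's: both compute $\sum_{i=1}^{d}(\lambda_i-\mu)^2$ from the trace identities, reduce \eqref{superduper} to the bound $\sum_i(\lambda_i-\mu)^2\geq d-m$ (your identity $s^2-L=d(q-(d-m))$ is precisely the ``simple algebraic manipulation'' the paper invokes), and then establish that bound by AM--GM combined with the fact that $\prod_{\lambda_i\neq\mu}(\mu-\lambda_i)$ is a nonzero rational integer---your phrasing via $R(\mu)\in\mathbb{Z}\setminus\{0\}$ and the paper's phrasing via ``nonzero algebraic integers closed under algebraic conjugation'' are the same statement, and the equality analysis is identical as well. One caveat on your irrational-$\lambda_0$ paragraph: the orbit-wise sketch does not close as written, since after deleting only $\lambda_0$ from its Galois orbit you would need $\sum_{j\geq 2}(\mu-\lambda_0^{(j)})^2\geq r_0-1$, and this does not follow from $T_0\geq r_0$ and $|p_0(\mu)|\geq 1$ alone---but the paper's proof shares exactly this lacuna (its closure-under-conjugation claim tacitly requires $\lambda_0\in\mathbb{Z}$), and every application of the theorem in the paper has $\lambda_0=-5$.
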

\begin{proof}
Let $\lambda_1\leq \lambda_2\leq \dots\leq\lambda_{d}$ be the other eigenvalues of $S$. We have the following chain of (in)equalities:
\[n(n-1)+\mu^2n-(\lambda_0-\mu)^2(n-d)=\sum_{i=1}^d(\lambda_i-\mu)^2\geq (d-m)\prod_{\substack{i=1\\ \lambda_i\neq \mu}}^d(\lambda_i-\mu)^{2/d}\geq d-m,\]
where the leftmost relation follows from the standard equations~\eqref{St1} and \eqref{St2}; the middle relation is the inequality of arithmetic and geometric means; and the rightmost relation follows from the fact that the set $\{\lambda_i-\mu\colon 1\leq i\leq d, \lambda_i\neq \mu\}$ consists of nonzero algebraic integers closed under algebraic conjugation. The claim follows after simple algebraic manipulations. In the case of equality, $\lambda_i=\mu\pm 1$ for all $1\leq i\leq d$, for which $\lambda_i\neq \mu$. The values of $\mu$ and $w$ can be obtained from the standard equations.

Conversely, assume that $S$ is a Seidel matrix of order $n$ having spectrum $\{[\lambda_0]^{n-d}$, $[\mu-1]^w$, $[\mu]^m,[\mu+1]^{d-m-w}\}$, where $w$ is as defined in the theorem. Then the claimed inequality holds with equality.
\end{proof}
\begin{rem}\label{stupidremark}
We remark that if there is equality in \eqref{superduper} in Theorem~\ref{maintnon} and, in addition, $3\leq d\leq\lambda_0^2-2$, then $n=\left\lfloor d(\lambda_0^2-1)/(\lambda_0^2-d)\right\rfloor$. Indeed, \eqref{superduper} implies that $\mu$ must satisfy a quadratic equation with discriminant $d^2-dn\lambda_0^2-dn^2+dn+n^2\lambda_0^2-dm\geq 0$. In particular $d^2-dn\lambda_0^2-dn^2+dn+n^2\lambda_0^2\geq 0$. Combining this latter inequality with the relative bound given in Proposition~\ref{s2p23} gives us
\[L:=\frac{d(\lambda_0^2-1)+d\sqrt{(\lambda_0^2-3)^2+4d-8}}{2(\lambda_0^2-d)}\leq n\leq \frac{d(\lambda_0^2-1)}{\lambda_0^2-d}=:R.\]
Since $R-L=2d/(\lambda_0^2-1+\sqrt{(\lambda_0^2-3)^2+4d-8})<1$, the claim follows. In particular, $n$ is as close to the relative bound as it can be.
\end{rem}
\begin{example}\label{DRJACK}
Here we illustrate the utility of Theorem~\ref{maintnon} by showing that there do not exist $n=22$ lines in $\mathbb{R}^{12}$ with common angle $1/5$. Assume that $S$ is the Seidel matrix of such a configuration with smallest eigenvalue $\lambda_0=-5$. Let $\mu:=4$ and observe that Corollary~\ref{S2mC} implies that $\mu$ cannot be an eigenvalue of $S$. Therefore $m=0$, which contradicts \eqref{superduper}.
\end{example}
It follows from Theorem~\ref{maintnon} (see Corollary~\ref{seethisthing}) that $30$ lines in $\mathbb{R}^{14}$ must correspond to a Seidel matrix with spectrum $\{[-5]^{16},[5]^9,[7]^5\}$. Similarly, $42$ lines in $\mathbb{R}^{16}$ must come from a Seidel matrix with spectrum $\{[-5]^{26},[7]^7,[9]^9\}$. We prove that these Seidel matrices do not exist.
\begin{lemma}\label{strlem1}
Let $M$ be a positive semi-definite $\{0,\pm1\}$ matrix of order $n$ with constant diagonal entries $1$. Then there exist positive integers $c$, $k_1,\hdots,k_c$, such that $M$ is switching equivalent to the block diagonal matrix $\mathrm{diag}\left[J_{k_1},\hdots,J_{k_c}\right]$.
\end{lemma}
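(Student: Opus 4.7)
The plan is to interpret $M$ as a Gram matrix and exploit the equality case of the Cauchy--Schwarz inequality. First I would use positive semi-definiteness to factor $M = V^T V$, where the columns $v_1,\dots,v_n$ of $V$ are unit vectors (because $M_{ii}=1$) whose pairwise inner products lie in $\{0,\pm1\}$. The decisive observation is that $|\langle v_i,v_j\rangle|=1$ forces equality in Cauchy--Schwarz, so $v_j = \pm v_i$ whenever the off-diagonal entry $M_{ij}$ is nonzero.

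Next I would define the relation $i\sim j$ iff $M_{ij}\neq 0$ and check that it is an equivalence relation on $\{1,\dots,n\}$: reflexivity and symmetry are immediate, while transitivity follows from the collinearity observation ($v_k=\pm v_j$ and $v_j=\pm v_i$ give $v_k=\pm v_i$, hence $M_{ik}=\pm1\neq 0$). Let $C_1,\dots,C_c$ be the resulting equivalence classes of sizes $k_1,\dots,k_c$.

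To finish, I would realize the switching by a suitable sign change followed by a permutation. For each class $C_r$ fix a representative index $i_r$ and set $d_i := \langle v_i,v_{i_r}\rangle \in\{\pm 1\}$ for $i\in C_r$, so that $d_i v_i = v_{i_r}$. With $D:=\mathrm{diag}(d_1,\dots,d_n)$, the matrix $DMD$ is the Gram matrix of the vectors $d_i v_i$; within each class all these vectors coincide, so the principal submatrix of $DMD$ indexed by $C_r$ equals $J_{k_r}$, while cross-class entries remain zero by the definition of $\sim$. A permutation $P$ that lists the indices class by class then yields $P(DMD)P^T = \mathrm{diag}(J_{k_1},\dots,J_{k_c})$, as desired.

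I do not anticipate a genuine obstacle here; the whole argument is Cauchy--Schwarz plus bookkeeping of signs. The only step that needs a moment of care is checking that the sign choice $d_i$ produces a consistent common vector on each class, and this is exactly what transitivity of collinearity among unit vectors guarantees.
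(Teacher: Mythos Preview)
Your argument is correct and complete. The Gram factorization $M=V^TV$ with unit columns, the Cauchy--Schwarz equality case forcing $v_j=\pm v_i$ whenever $M_{ij}\neq 0$, the verification that $i\sim j \Leftrightarrow M_{ij}\neq 0$ is an equivalence relation, and the sign normalization within each class all go through exactly as you describe.

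Your route, however, is genuinely different from the paper's. The paper proceeds by induction on $n$: assuming the leading $(n-1)\times(n-1)$ principal submatrix is already in block-diagonal form, it rules out two specific $3\times 3$ principal submatrices (each of which has a negative eigenvalue and hence cannot sit inside a positive semi-definite matrix), and this forces the new row and column either to be identically zero off the diagonal or to extend one existing $J$-block. Your approach is more conceptual: the Gram-matrix interpretation explains in one stroke \emph{why} the structure is forced, and transitivity of collinearity replaces the explicit forbidden-submatrix check. The inductive argument, on the other hand, stays entirely at the level of matrix entries and avoids invoking the factorization theorem for positive semi-definite matrices; it is closer in spirit to how the lemma is actually applied in the subsequent Theorem, where one works directly with the entries of $M$.
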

\begin{proof}
The proof goes by induction on $n$. For $n=1$ there is nothing to prove, so we can consider an $(n+1)\times (n+1)$ matrix $M'$. We can assume, by the inductive hypothesis, that its leading principal $n\times n$ submatrix $M$ is already in the desired form, namely $M=\mathrm{diag}\left[J_{k_1},\hdots,J_{k_c}\right]$ for some positive integers $c$, $k_1,\hdots,k_c$. Since $M'$ is positive semi-definite, it follows that it cannot have any principal submatrices, switching equivalent to either
\[\begin{bmatrix}
1 & 0 & 1\\
0 & 1 & 1\\
1 & 1 & 1\\
\end{bmatrix}\qquad\text{or}\qquad\begin{bmatrix}
1 & 1 & 1\\
1 & 1 & -1\\
1 & -1 & 1\\
\end{bmatrix}.\]
Therefore either $M'$ is switching equivalent to $\mathrm{diag}\left[J_{k_1},\hdots,J_{k_c},J_1\right]$ or, up to reordering the blocks, $M'$ is switching equivalent to $\mathrm{diag}\left[J_{k_1},\hdots,J_{k_c+1}\right]$.
\end{proof}
\begin{theorem}\label{nonex}
Let $S$ be a Seidel matrix with spectrum $\{[\lambda]^{a},[\mu]^b,[\nu]^{c}\}$ of order $n\equiv 0\ (\mathrm{mod}\ 2)$. Assume that $\lambda+\mu\equiv n-2\ (\mathrm{mod}\ 4)$, and $|n-1+\lambda\mu|=4$. Then $|(\nu-\lambda)(\nu-\mu)|/4=n/c\in\mathbb{Z}$, and $|\nu|\leq n/c-1$.
\end{theorem}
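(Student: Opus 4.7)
The plan is to associate to $S$ the auxiliary matrix
$$M:=\frac{\epsilon}{4}(S-\lambda I)(S-\mu I),\qquad \epsilon:=\mathrm{sign}(n-1+\lambda\mu)\in\{\pm 1\},$$
analyze its structure, and apply Lemma~\ref{strlem1}. The modular hypothesis forces $\lambda+\mu\in\mathbb{Z}$, and then the fact that the characteristic polynomial of the integer matrix $S$ lies in $\mathbb{Z}[x]$ forces $\lambda\mu\in\mathbb{Z}$ as well (either $\lambda,\mu$ are both integers, or they form a pair of Galois-conjugate quadratic integers), so $M$ has rational entries. A direct computation gives $M_{ii}=\epsilon(n-1+\lambda\mu)/4=1$. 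Using the spectral decomposition $(S-\lambda I)(S-\mu I)=(\nu-\lambda)(\nu-\mu)P$, where $P$ is the orthogonal projection onto the $\nu$-eigenspace of $S$, the trace identity $c(\nu-\lambda)(\nu-\mu)=n(n-1+\lambda\mu)$ shows that the signs of $n-1+\lambda\mu$ and $(\nu-\lambda)(\nu-\mu)$ coincide, and hence $M$ is positive semi-definite with spectrum $\{[0]^{n-c},[n/c]^{c}\}$; in particular $|(\nu-\lambda)(\nu-\mu)|/4=n/c>0$.

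The central step is to show that the off-diagonal entries of $M$ are integers. By Theorem~\ref{Euler3graph} we may switch so that the ambient graph of $S$ is Eulerian, making all vertex degrees $d_i$ even. Writing $S=J-2A-I$ and expanding $S^2$, one obtains
$$[S^2]_{ij}=n-2(d_i+d_j)+4([A^2]_{ij}+A_{ij})-2\equiv n-2\pmod 4$$
for $i\neq j$. On the other hand, the hypothesis $\lambda+\mu\equiv n-2\pmod 4$ together with $n$ even (so $2(n-2)\equiv 0\pmod 4$) and $S_{ij}=\pm 1$ yields $(\lambda+\mu)S_{ij}\equiv n-2\pmod 4$, independently of the sign of $S_{ij}$. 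Hence $[S^2]_{ij}-(\lambda+\mu)S_{ij}\equiv 0\pmod 4$, so $M_{ij}\in\mathbb{Z}$. Positive semi-definiteness combined with $M_{ii}=1$ then yields $|M_{ij}|\leq 1$ via the $2\times 2$ principal minor, so $M_{ij}\in\{-1,0,1\}$.

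Lemma~\ref{strlem1} now produces a $\pm 1$-diagonal matrix $D$ such that $DMD$ is block diagonal of the form $\mathrm{diag}[J_{k_1},\ldots,J_{k_p}]$. Matching the spectrum $\{[0]^{n-c},[n/c]^{c}\}$ against the spectra of the blocks $J_{k_i}$ forces $p=c$ and $k_1=\cdots=k_c=n/c$, yielding $n/c\in\mathbb{Z}$ and $|(\nu-\lambda)(\nu-\mu)|/4=n/c$. Replacing $S$ by $\widetilde S:=DSD$ (which preserves the spectrum), the block diagonal matrix $DMD$ equals $(n/c)\widetilde P$, where $\widetilde P$ is the orthogonal projection onto the $\nu$-eigenspace of $\widetilde S$; this eigenspace is therefore spanned by the characteristic vectors $\chi_{V_1},\ldots,\chi_{V_c}$ of the $c$ blocks. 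Evaluating $\widetilde S\chi_{V_k}=\nu\chi_{V_k}$ at any coordinate $i\in V_k$ writes $\nu=\sum_{j\in V_k\setminus\{i\}}\widetilde S_{ij}$, a sum of $n/c-1$ terms each of modulus $1$, so $|\nu|\leq n/c-1$.

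The main obstacle is the integrality of the off-diagonal entries of $M$: the three hypotheses $n\equiv 0\pmod 2$, $\lambda+\mu\equiv n-2\pmod 4$, and $|n-1+\lambda\mu|=4$, combined with the Eulerian representative furnished by Theorem~\ref{Euler3graph}, are calibrated precisely so that $M$ becomes a $\{0,\pm 1\}$-matrix with constant diagonal $1$, which is what unlocks Lemma~\ref{strlem1} and, through it, both the divisibility $c\mid n$ and the eigenvector structure that yields the bound on $|\nu|$.
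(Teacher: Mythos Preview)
Your argument is correct and follows the same line as the paper's proof: both form the positive semi-definite matrix $\epsilon(S-\lambda I)(S-\mu I)$, use the Eulerian representative from Theorem~\ref{Euler3graph} together with the modular hypotheses to force the off-diagonal entries into $4\mathbb{Z}$ and bound them via $2\times 2$ minors, invoke Lemma~\ref{strlem1} to obtain the $J_{n/c}$-block structure, and read off $c\mid n$ and $|(\nu-\lambda)(\nu-\mu)|/4=n/c$. The only noteworthy difference is the last step: the paper derives $|\nu|\le n/c-1$ by computing $[S^3]_{11}$ in two ways (via the minimal polynomial and via the explicit block form of $M$), whereas you observe directly that the block characteristic vectors $\chi_{V_k}$ span the $\nu$-eigenspace of the switched matrix $\widetilde S$ and evaluate $\widetilde S\chi_{V_k}=\nu\chi_{V_k}$ at one coordinate. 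These two routes unwind to the identical row-sum identity $\nu=\sum_{j\in V_k\setminus\{i\}}\widetilde S_{ij}$, so the difference is purely cosmetic; your phrasing is arguably the more transparent one. (A minor quibble: Lemma~\ref{strlem1} gives switching equivalence, so a permutation matrix may accompany your diagonal $D$; this is harmless since you may permute $S$ as well.)
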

\begin{proof}
Let $\varrho:=(\nu-\lambda)(\nu-\mu)$ and let $\sigma:=\mathrm{sgn}\varrho$. Consider the positive semi-definite matrix $M:=\sigma(S-\lambda I)(S-\mu I)$ with spectrum $\{[0]^{a+b},[|\varrho|]^c\}$. Let $A$ be the adjacency matrix of the ambient graph corresponding to $S$. By Theorem~\ref{Euler3graph} we may assume that $AJ+JA\equiv 0\ (\mathrm{mod}\ 2)$ and hence $S^2=(n-2)J-2(AJ+JA)+4(A^2+A)+I\equiv (n-2)J+I\ (\mathrm{mod}\ 4)$. Therefore the off-diagonal entries of $M$ are all divisible by $4$, while the diagonal entries of $M$ are all equal to $4=\sigma(n-1+\lambda\mu)\geq 0$. Since every $2\times 2$ submatrix of $M$ must be positive semi-definite, it follows that every off-diagonal entry $m$ satisfies $|m|\leq |n-1+\lambda\mu|=4$. Therefore $m\in\{0,\pm4\}$, and hence $M$ is a $\{0,\pm 4\}$-matrix, and $|\varrho|$ is divisible by $4$. By Lemma \ref{strlem1} we may assume that $M=4\mathrm{diag}\left[J_{k_1},\hdots,J_{k_c}\right]$ for some positive integers $k_1,\hdots,k_c$. Since each block must have an eigenvalue $|\varrho|/4$, it follows that $k_i=|\varrho|/4$ for all $i$, and thus $c|\varrho|/4=n$. Therefore $M=4J_{n/c}\otimes I_c$.
On the one hand, we find that
\[[S^3]_{11}=[\left(\sigma M+(\lambda+\mu)S-\lambda\mu I\right)S]_{11}=[(4\sigma J_{n/c}\otimes I_c)S]_{11}+(\lambda+\mu)(n-1).\]
On the other hand, from equation \eqref{CHT} we have $[S^3]_{11}=(\lambda+\mu+\nu)(n-1)+\lambda\mu\nu$. Therefore, we have $|\nu(n-1+\lambda\mu)|=4|[(J_{n/c}\otimes I_c)S]_{11}|\leq 4(n/c-1)$, and the result follows.
\end{proof}
%The following is immediate.
\begin{corollary}[cf.\ \mbox{\cite[Appendix~A]{vDS}}]\label{nonex2}
There do not exist regular graphs with spectrum $\{[11]^1,[2]^{16},[-3]^9,[-4]^4\}$ or $\{[12]^1,[2]^{16},[-3]^8,[-4]^5\}$.
\end{corollary}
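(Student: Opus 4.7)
The plan is to reduce both cases to a single Seidel-matrix nonexistence statement and then apply Theorem~\ref{nonex} to it. Let $\Gamma$ be a hypothetical regular graph realizing either spectrum. In both cases $\Gamma$ has $n = 30$ vertices, is connected (the largest eigenvalue has multiplicity $1$), and has regularity degree $k = 11$ or $k = 12$ respectively. By Lemma~\ref{triww2}, the associated Seidel matrix $S := J - 2A - I$ has eigenvalues $n - 2k - 1$ together with $-2\lambda_i - 1$ for the remaining eigenvalues $\lambda_i$ of $A$. A direct substitution shows that both candidate spectra collapse to the \emph{same} Seidel spectrum $\{[-5]^{16}, [5]^9, [7]^5\}$: for $k = 11$ the Seidel eigenvalue $7$ arises both from $n - 2k - 1$ (multiplicity $1$) and from $-2(-4) - 1$ (multiplicity $4$); for $k = 12$ the Seidel eigenvalue $5$ arises both from $n - 2k - 1$ (multiplicity $1$) and from $-2(-3) - 1$ (multiplicity $8$).

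It therefore suffices to rule out a Seidel matrix with spectrum $\{[-5]^{16}, [5]^9, [7]^5\}$. I would do this by applying Theorem~\ref{nonex} with the assignment $\lambda = -5$, $\mu = 5$, $\nu = 7$, $c = 5$, and $n = 30$. The hypotheses are readily verified: $n$ is even; $\lambda + \mu = 0 \equiv 28 = n - 2 \pmod{4}$; and $|n - 1 + \lambda \mu| = |29 - 25| = 4$. The conclusion of Theorem~\ref{nonex} then forces $|\nu| \leq n/c - 1 = 30/5 - 1 = 5$, which contradicts $\nu = 7$ and completes the argument.

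The only genuine choice in this plan is which of the three possible pairs of eigenvalues of $S$ should play the role of $\{\lambda, \mu\}$ in Theorem~\ref{nonex}. A direct check of $|n - 1 + \lambda\mu|$ for the pairs $\{-5, 5\}$, $\{-5, 7\}$, and $\{5, 7\}$ yields the values $4$, $6$, and $64$ respectively, so only the first pair satisfies the crucial hypothesis $|n - 1 + \lambda \mu| = 4$, and the remaining eigenvalue $\nu = 7$ with multiplicity $c = 5$ is then forced. Consistency of $n/c = 6 \in \mathbb{Z}$ with $|(\nu-\lambda)(\nu-\mu)|/4 = 12 \cdot 2 / 4 = 6$ is already a sanity check; the obstruction $|\nu| > n/c - 1$ is the real content. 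No substantial obstacle beyond this bookkeeping is anticipated.
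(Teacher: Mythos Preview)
Your proposal is correct and follows exactly the paper's approach: reduce both regular-graph spectra via Lemma~\ref{triww2} to the single Seidel spectrum $\{[-5]^{16},[5]^9,[7]^5\}$, then apply Theorem~\ref{nonex} to rule it out. Your write-up is in fact more explicit than the paper's, which merely asserts that Theorem~\ref{nonex} applies; your verification of the hypotheses and the observation that only the pair $\{\lambda,\mu\}=\{-5,5\}$ meets the condition $|n-1+\lambda\mu|=4$ are accurate and helpful.
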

\begin{proof}
Either of these graphs would lead, via Lemma~\ref{triww2}, to a Seidel matrix with spectrum $\{[-5]^{16},[5]^9,[7]^5\}$, which does not exist by Theorem~\ref{nonex}.
\end{proof}
In \cite{vDS} the authors used a computer search to claim the nonexistence of regular graphs with spectrum $\{[11]^1,[2]^{16},[-3]^9,[-4]^4\}$. However, the case $\{[12]^1, [2]^{16}$, $[-3]^8, [-4]^5\}$ was left open.
%Then $c\equiv 0\ (\mathrm{mod}\ n)$, $|\nu^2-(\lambda+\mu)\nu+\lambda\mu|\equiv 0\ (\mathrm{mod}\ 4)$, $|\nu^2-(\lambda+\mu)\nu+\lambda\mu|c=4n$ and $|\nu|\leq n/c-1$.

By combining Theorems~\ref{maintnon} and \ref{nonex}, we obtain the main result of this subsection.
\begin{corollary}\label{seethisthing}
The maximum number of equiangular lines in $\mathbb{R}^{14}$ is at most $29$. The maximum number of equiangular lines in $\mathbb{R}^{16}$ is at most $41$.
\end{corollary}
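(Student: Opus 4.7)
The plan is to prove both bounds by contradiction along entirely parallel arguments; I shall focus on $N(14) \le 29$ and indicate the variant for $N(16) \le 41$ at the end. Suppose there exist $n = 30$ equiangular lines in $\mathbb{R}^{14}$, and let $S$ denote the associated Seidel matrix, with smallest eigenvalue $\lambda_0$ of multiplicity $n - d = 16$. The strategy has four steps: (i) show that $\lambda_0 = -5$; (ii) show that $6$ is not an eigenvalue of $S$; (iii) use Theorem~\ref{maintnon} to pin down the spectrum of $S$ to $\{[-5]^{16}, [5]^9, [7]^5\}$; (iv) rule this spectrum out by Theorem~\ref{nonex}.

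For~(i), since $n > 2d$, Theorem~\ref{s2t2}(b) makes $1/\alpha$ an odd integer; the classical Lemmens--Seidel bound gives $N(14, 1/3) \le 28$, ruling out $1/\alpha = 3$, and for $1/\alpha \ge 7$ the relative bound (Proposition~\ref{s2p23}) yields $n < 20$. Hence $1/\alpha = 5$. For~(ii), Corollary~\ref{S2mC} gives $\det S \equiv 1 - 30 \equiv 3 \pmod 4$, which is odd, whereas if $6$ were an eigenvalue then $\det S$ would be divisible by $6$ and therefore even---a contradiction. So $m := \dim \ker(S - 6I) = 0$.

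For~(iii), I apply Theorem~\ref{maintnon} with $\mu = 6$ and $m = 0$. A short computation verifies the hypothesis $\lambda_0 \le -\sqrt{d(n^2 - n + m - d)/(n^2 - dn)}$ (the right-hand side evaluates to $\sqrt{11984/480} < 5$), and both sides of~\eqref{superduper} equal $2/7$, so equality holds; the equality clause of the theorem then pins down the spectrum of $S$ to be $\{[-5]^{16}, [5]^w, [7]^{14-w}\}$ with $w = ((n-d)\lambda_0 + d\mu + d - m)/2 = 9$. For~(iv), this spectrum satisfies the hypotheses of Theorem~\ref{nonex} with $(\lambda, \mu, \nu) = (-5, 5, 7)$ and $(a, b, c) = (16, 9, 5)$: $n$ is even, $\lambda + \mu = 0 \equiv n - 2 \pmod 4$, and $|n - 1 + \lambda\mu| = 4$. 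The conclusion $|\nu| \le n/c - 1 = 5$ then contradicts $\nu = 7$.

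The case $N(16) \le 41$ follows \emph{mutatis mutandis} with $n = 42$, $\mu := 8$, and again $\lambda_0 = -5$: the same parity argument ($\det S \equiv 3 \pmod 4$) excludes $8$ as an eigenvalue, Theorem~\ref{maintnon} (with both sides of~\eqref{superduper} equal to $1/8$) forces the spectrum $\{[-5]^{26}, [7]^7, [9]^9\}$, and Theorem~\ref{nonex} applied with $(\lambda, \mu, \nu) = (-5, 9, 7)$ yields $|\nu| \le 5 < 7$. The most delicate point is step~(ii): without first excluding $\mu$ as an eigenvalue, the case $m = 1$---permitted by Theorem~\ref{s2t2}(a) since $\mu$ is even---would leave the inequality in Theorem~\ref{maintnon} strict, yielding no spectral information. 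The determinantal parity argument via Corollary~\ref{S2mC} is thus the keystone that allows the two main theorems of the subsection to combine and deliver the result.
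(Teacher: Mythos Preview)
Your proof is correct and follows essentially the same route as the paper's: exclude the even integer $\mu$ as an eigenvalue via Corollary~\ref{S2mC}, invoke the equality case of Theorem~\ref{maintnon} to force the three-eigenvalue spectrum, and finish with Theorem~\ref{nonex}. You are in fact more thorough than the paper, since you explicitly justify $\lambda_0=-5$ in step~(i) and verify the hypothesis $\lambda_0\le -\sqrt{d(n^2-n+m-d)/(n^2-dn)}$, both of which the paper leaves implicit.
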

\begin{proof}
We apply Theorem~\ref{maintnon} as follows. Assume that there exists $n=30$ ($n=42$, resp.) equiangular lines in dimension $d=14$ ($d=16$, resp.). Set $\mu$ to be the closest integer to $-\lambda_0(n-d)/d$. In both cases $\mu$ is even. Since, by Corollary~\ref{S2mC}, $S$ cannot have even eigenvalues in these cases, we get $m=0$. Therefore, in both cases, we have equality in \eqref{superduper}, and hence $S$ must be a Seidel matrix with three distinct eigenvalues. However, application of Theorem~\ref{nonex} shows that these Seidel matrices do not exist.
\end{proof}
A further implication of Theorem~\ref{maintnon} (combined with the first part of Theorem~\ref{s2t2}) is that $61$ equiangular lines in $\mathbb{R}^{18}$ must necessarily correspond to a Seidel matrix with spectrum $\{[-5]^{43}, [11]^{9},[12]^1,[13]^{8} \}$. We were unable to settle the existence of such matrices.
\section{Equiangular lines with common angle $1/5$}
Maximal equiangular line systems with common angle $\alpha=1/3$ are completely understood. In particular, the corresponding Seidel matrices must contain an $I_4-J_4$ principal submatrix \cite{LS}, and consequently one cannot have more than $2(d-1)$ such equiangular lines in $\mathbb{R}^d$ for $d\geq 15$ (see also \cite{HAA, vLS}). In this section we discuss results regarding $N_5(d)$, the maximum number of equiangular lines with common angle $1/5$. 
%In this section we discuss equiangular line systems with common angle $1/3$ or $1/5$ in $\mathbb{R}^d$, or, equivalently, Seidel matrices with smallest eigenvalue $-3$ or $-5$. The goal is to get some insight into the size of maximal equiangular line systems with prescribed common angle $\alpha$ in $\mathbb{R}^d$, which we denote by $N_{1/\alpha}(d)$. The case $N_{3}(d)$ is completely understood and was determined by Lemmens and Seidel \cite{LS}.
%\begin{theorem}[see \cite{HAA}, \cite{LS}, \cite{vLS}]\label{maxeq3}
%The maximum number of equiangular lines in $\mathbb{R}^d$ with common angle $\alpha=1/3$ is given in Table \ref{xzzz1} below.
%\end{theorem}
%\[\begin{array}{c|cccccccc}
%d          & 2 & 3 & 4 & 5  & 6  & 7$-$15 & 16$-$\\
%N_{3}(d)   & 2 & 4 & 6 & 10 & 16 & 28 & 2(d-1)\\
%\end{array}\]
%\begingroup
%\captionof{table}{The maximum number of equiangular lines with $\alpha=1/3$.}\label{xzzz1}
%\endgroup
%It is easy to see that the Seidel matrix of any maximal set of equiangular lines with common angle $1/3$ must contain an $I_4-J_4$ principal submatrix. This leads to the concept of \emph{pillars}, which is a nontrivial geometric interpretation of equiangular line systems \cite{LS}, \cite{N}. If the same principle, i.e., the existence of a $I_6-J_6$ principal submatrix would hold for $\alpha=1/5$ too, then the following result would describe the size of such maximal equiangular line systems.
\begin{theorem}[see \cite{LS}]\label{thisLS}
Let $S$ be a Seidel matrix of order $n$ with smallest eigenvalue $-5$ of multiplicity $n-d$, containing an $I_6-J_6$ principal submatrix. Then $n\leq 276$ for $43\leq d\leq 185$, and $n\leq\left\lfloor 3(d-1)/2\right\rfloor$ for $d\geq 186$.
\end{theorem}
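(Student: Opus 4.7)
The plan is to realize $S+5I$ as a Gram matrix $V^{T}V$ of vectors $u_{1},\ldots,u_{n} \in \mathbb{R}^{d}$ with $\|u_{i}\|^{2} = 5$ and $\langle u_{i},u_{j}\rangle = \pm 1$, and then to extract the linear dependence forced by the $I_{6}-J_{6}$ principal submatrix. Indexing this block by $\{1,\ldots,6\}$, all off-diagonal entries equal $-1$, so $\|\textstyle\sum_{i=1}^{6}u_{i}\|^{2} = 6\cdot 5 + 2\binom{6}{2}(-1) = 0$, whence the crucial relation $\sum_{i=1}^{6}u_{i} = 0$. Taking the inner product with any $u_{k}$ for $k > 6$ gives $\sum_{i=1}^{6}\langle u_{k},u_{i}\rangle = 0$; since each summand is $\pm 1$, this forces a $3/3$-split, and each remaining line determines a balanced signing $\epsilon_{k}:\{1,\ldots,6\} \to \{\pm 1\}$.

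Next I would follow the classical Lemmens--Seidel \emph{pillar} decomposition: set $A_{k} := \{i \le 6 : \epsilon_{k}(i) = -1\}$, normalize signs so that $1 \in A_{k}$, and partition the remaining $n-6$ lines into at most $\binom{5}{2} = 10$ pillars $\mathcal{P}_{A}$. Writing $u_{k} = p_{k} + q_{k}$ orthogonally with respect to $V_{5} := \operatorname{span}(u_{1},\ldots,u_{6})$, a short calculation yields $p_{k} = \tfrac{1}{6}\sum_{i}\epsilon_{k}(i)u_{i}$, $\|p_{k}\|^{2} = 1$, and consequently $\|q_{k}\|^{2} = 4$ with $q_{k} \in V_{5}^{\perp}$, a space of dimension $d - 5$. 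A routine inner-product computation then gives $\langle q_{k},q_{\ell}\rangle \in \{0,-2\}$ within a single pillar and $\langle q_{k},q_{\ell}\rangle \in \{\pm 2/3,\,\pm 4/3\}$ across distinct pillars, which are the constraints on which the whole argument rests.

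The bound now emerges from a dichotomy. Inside a single pillar, the graph on pairs with $\langle q_{k},q_{\ell}\rangle = -2$ is $K_{4}$-free (three mutually $-2$-correlated norm-$2$ vectors sum to $0$, precluding a fourth), so each connected component spans at most two dimensions of $V_{5}^{\perp}$, yielding a per-pillar bound of $\lfloor 3(d-5)/2 \rfloor$; when only one pillar is non-empty this gives the global estimate $6 + \lfloor 3(d-5)/2\rfloor = \lfloor 3(d-1)/2 \rfloor$. When several pillars coexist, the cross-pillar inner products $\pm 2/3,\,\pm 4/3$ impose stringent positive-semidefinite constraints on the Gram matrix of all the $q_{k}$, and the classical Lemmens--Seidel case analysis caps the total at $276$, which is sharp by the configuration realizing $N(23) = 276$. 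The main obstacle, and the substance of the proof from \cite{LS}, is executing this multi-pillar case analysis carefully enough to rule out more than $276$ lines distributed across the $10$ pillars; once this is in hand, the two bounds are combined and the crossover at $d = 186$ emerges precisely because this is the first dimension for which the single-pillar estimate $\lfloor 3(d-1)/2 \rfloor$ exceeds $276$.
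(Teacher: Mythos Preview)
The paper does not supply its own proof of this theorem; it is quoted verbatim from Lemmens--Seidel with the attribution ``see \cite{LS}''. Your outline is exactly the pillar decomposition of \cite{LS}, so in that sense you are reproducing the intended argument, and the Gram realisation, the identity $\sum_{i\le 6}u_i=0$, the $3/3$ split, the projection formulae, and the cross-pillar inner products are all correct.

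There is, however, a genuine gap in your single-pillar step. From ``$K_4$-free'' you infer ``each connected component spans at most two dimensions of $V_5^\perp$'', and that implication is false. Take four pillar vectors whose $-2$-graph is a $4$-cycle: the Gram matrix is $4I-2A$ with $A$ the $4$-cycle adjacency matrix, whose eigenvalues are $2,0,0,-2$; hence the Gram matrix has rank $3$, and the component spans three dimensions, not two. A path on four vertices is even worse (rank $4$). The bound $|\mathcal P|\le\lfloor 3(d-5)/2\rfloor$ is correct, but the reason is spectral: the pillar Gram matrix is $4I-2A$, positive semidefinite forces $\lambda_{\max}(A)\le 2$, so $A$ is a disjoint union of (extended) simply-laced Dynkin diagrams; the rank equals $|\mathcal P|-\operatorname{mult}_A(2)$, and since eigenvalue $2$ occurs once per extended component and each such component has at least three vertices, $\operatorname{mult}_A(2)\le |\mathcal P|/3$. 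This gives $d-5\ge \tfrac{2}{3}|\mathcal P|$, whence $n\le 6+\lfloor 3(d-5)/2\rfloor=\lfloor 3(d-1)/2\rfloor$. Your multi-pillar paragraph, by contrast, is essentially a citation of \cite{LS} rather than an argument, which is acceptable here since the paper itself treats the whole theorem as a citation.
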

\begin{rem}
One out of the four Seidel matrices of order $12$ corresponding to maximal equiangular line systems in $\mathbb{R}^{9}$ with common angle $1/5$ does not contain (up to switching) any $I_6-J_6$ principal submatrix.
\end{rem}
Theorem~\ref{thisLS} was subsequently improved by Neumaier, who determined the maximum number of equiangular lines with common angle $1/5$ in $\mathbb{R}^d$ for large $d$ \cite{N}. It turns out that the Seidel matrix of all such line systems is switching equivalent to one whose ambient graph has largest eigenvalue at most $2$. Such graphs are called \emph{Dynkin graphs} \cite{LS}.
\begin{theorem}[Neumaier \cite{N}]\label{NTH}
There exists a positive integer $V$, for which if $S$ is a Seidel matrix of order $n\geq V$ with smallest eigenvalue $-5$, then $S$ is switching equivalent to some Seidel matrix $S'$ such that its ambient graph $\Gamma'$ is a Dynkin graph.
\end{theorem}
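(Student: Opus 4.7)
The plan is to exploit the positive semidefiniteness of $S+5I$ to realise the configuration via norm-$\sqrt{5}$ Euclidean vectors, then use a root-system style analysis to force the ambient graph (after a suitable switching) to have spectral radius at most~$2$.

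First, since $\lambda_{\min}(S)=-5$, the matrix $S+5I$ is positive semidefinite and admits a factorisation $S+5I=U^TU$ whose columns $u_1,\ldots,u_n$ satisfy $\|u_i\|^2=5$ and $\langle u_i,u_j\rangle=\pm 1$ for $i\neq j$. Augment with an extra coordinate, setting $\tilde u_i=(u_i,1)\in\mathbb{R}^{r+1}$; then $\|\tilde u_i\|^2=6$ and $\langle\tilde u_i,\tilde u_j\rangle\in\{0,2\}$, so the $\tilde u_i$ lie in a rank-$(r+1)$ integral lattice $L$. A switching of $S$ by a sign diagonal $D=\mathrm{diag}(\epsilon_i)$ corresponds bijectively to replacing the $u_i$ by $\epsilon_iu_i$, so the task is to choose the $\epsilon_i$ in such a way that the ambient graph $\Gamma'$ of $S'=DSD$, with adjacency matrix $A'=(J-S'-I)/2$, becomes Dynkin.

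The heart of the argument is a local-to-global structural analysis on $L$. Integrality and positive semidefiniteness of the $3\times 3$ and $4\times 4$ principal sub-Gram matrices of $\{\tilde u_i\}$ severely restrict which patterns of inner products in $\{0,2\}$ can appear on a small subset. A Cameron--Goethals--Seidel--Shult style argument, applied to the ``inner-product graph'' $G$ whose edges are the pairs with $\langle\tilde u_i,\tilde u_j\rangle=2$, then shows that each connected component of $G$ must embed into one of the $ADE$ (possibly extended) Dynkin diagrams, and the signs can be propagated along each component so that $\Gamma'$ itself is a disjoint union of Dynkin and possibly extended-Dynkin graphs.

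The last step is to rule out the extended-Dynkin components $\tilde A_m,\tilde D_m,\tilde E_6,\tilde E_7,\tilde E_8$. Each such component contributes $\lambda_{\max}=2$ and a zero eigenspace of prescribed dimension, pinning down specific pieces of the spectrum of $S'$. Combining the trace identities $\operatorname{tr}S'=0$ and $\operatorname{tr}(S')^2=n(n-1)$ with Neumann's restriction on even eigenvalues (Theorem~\ref{s2t2}) and the determinant congruence of Corollary~\ref{S2mC}, one shows that the accumulated spectral constraints from extended components become mutually inconsistent once $n$ exceeds a finite threshold~$V$. Thus for $n\geq V$ only finite-type Dynkin components can survive. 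The main obstacle will be the middle step: carrying out the local-to-global lattice analysis and, in particular, patching the locally-determined sign choices on the components of $G$ into a single global switching $D$ whose effect on $\Gamma'$ is controlled. Once the lattice embedding and the sign-patching are in place, producing an explicit (if non-optimal) $V$ reduces to the finite spectral bookkeeping described above.
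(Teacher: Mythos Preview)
The paper does not prove Theorem~\ref{NTH}: it is stated with attribution to Neumaier~\cite{N} and used as a black box in Corollary~\ref{neumcor}. There is therefore no ``paper's own proof'' to compare your proposal against.

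That said, your sketch has a substantive error in its final step. In this paper a \emph{Dynkin graph} is defined as any graph with largest adjacency eigenvalue at most~$2$; this explicitly \emph{includes} the extended (affine) diagrams $\tilde A_m,\tilde D_m,\tilde E_6,\tilde E_7,\tilde E_8$. So once your step~3 is complete---$\Gamma'$ a disjoint union of Dynkin and extended-Dynkin pieces---the theorem is already proved, and your step~4 is unnecessary. Worse, step~4 as you describe it is actually false: the constructions in the proof of Corollary~\ref{neumcor} (disjoint unions of triangles, i.e., copies of $\tilde A_2$, together with an isolated vertex) give Seidel matrices with $\lambda_{\min}=-5$ of arbitrarily large order whose ambient graph consists entirely of extended-Dynkin components. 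No combination of trace identities and determinant congruences can exclude them.

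Your step~3 is also a genuine gap rather than a routine reduction. The vectors $\tilde u_i$ have norm~$6$ and inner products in $\{0,2\}$; after rescaling these are norm-$3$ vectors with inner products in $\{0,1\}$, which is \emph{not} the root-system setting of Cameron--Goethals--Seidel--Shult (norm-$2$ vectors with inner products in $\{0,\pm1\}$). The correct reformulation of the goal is: find a sign vector $\varepsilon\in\{\pm1\}^n$ with $S+5I\succeq\varepsilon\varepsilon^T$ (equivalently $2I-A'\succeq0$ for the switched ambient graph). Neumaier's proof that such an $\varepsilon$ exists for all sufficiently large~$n$ goes through his structure theory for graphs with bounded smallest eigenvalue and a Ramsey-type argument on forbidden subconfigurations; the lattice picture you set up does not by itself supply the crucial ``sign-patching'' step, and your outline does not indicate how you would carry it out.
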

\begin{rem}
Neumaier \cite{N} claims, without proof, that $2486\leq V\leq 45374$.
\end{rem}
The case $t=2$ of the following technical result is the one we are interested in.
\begin{lemma}\label{previous}
Let $\Gamma$ be a graph with adjacency matrix $A$ having largest eigenvalue $t$ of multiplicity $m$ and let $S:=J-2A-I$ be the corresponding Seidel matrix. Then $S$ has smallest eigenvalue $\lambda_0\geq -(2t+1)$ with equality if and only if $m\geq 2$; in this case $\lambda_0$ has multiplicity $m-1$.
\end{lemma}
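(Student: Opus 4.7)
The plan is to rewrite the shifted matrix $S+(2t+1)I$ as a manifestly positive semidefinite expression. Since $S=J-2A-I$, we have
\[
S+(2t+1)I \;=\; J + 2(tI-A),
\]
and both summands are positive semidefinite: $J$ because it is rank-one with non-negative entries, and $tI-A$ because $t$ is the largest eigenvalue of $A$. Hence $S+(2t+1)I\succeq 0$, so $\lambda_0\ge -(2t+1)$.

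For the equality case and the multiplicity, I would exploit the fact that the kernel of a sum of two positive semidefinite matrices is the intersection of their kernels (since $v^T(B+C)v=0$ forces $v^TBv=v^TCv=0$ whenever $B,C\succeq 0$). Therefore
\[
\ker\bigl(S+(2t+1)I\bigr) \;=\; \ker J \,\cap\, \ker(tI-A) \;=\; \bigl\{v\in\mathbb{R}^n : \mathbf{1}^T v=0,\ Av=tv\bigr\}.
\]
So the multiplicity of $-(2t+1)$ as an eigenvalue of $S$ equals the dimension of the intersection of the $t$-eigenspace $E_t$ of $A$ with the hyperplane $\mathbf{1}^\perp$.

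The remaining work is a Perron--Frobenius analysis. Since $t$ is the global spectral radius of $A$, it occurs as an eigenvalue of $A$ only through those connected components of $\Gamma$ whose own spectral radius equals $t$; in each such component it is a simple eigenvalue admitting a strictly positive eigenvector (extended by zero to the rest of $\Gamma$). Thus $E_t$ has a basis of $m$ non-negative vectors, each with strictly positive sum. When $m=1$ this single spanning vector satisfies $\mathbf{1}^T v>0$, so the intersection with $\mathbf{1}^\perp$ is trivial and $\lambda_0>-(2t+1)$. When $m\ge 2$ the linear functional $v\mapsto \mathbf{1}^T v$ is nonzero on $E_t$, hence has kernel of dimension exactly $m-1$ inside $E_t$, giving equality $\lambda_0=-(2t+1)$ with multiplicity $m-1$. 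The only conceptual care is in this last step, ensuring that the intersection drops dimension by exactly one rather than zero; this is precisely what the non-negativity of the component Perron vectors guarantees, and I do not anticipate any further obstacle.
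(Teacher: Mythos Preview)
Your argument is correct and follows essentially the same route as the paper: both rewrite $S+(2t+1)I=J+2(tI-A)$ as a sum of two positive semidefinite matrices, identify $\ker(S+(2t+1)I)=\ker J\cap E_t$, and then invoke Perron--Frobenius to see that $E_t$ contains a nonnegative vector not orthogonal to $\mathbf{1}$. The only cosmetic difference is that the paper finishes via the inclusion--exclusion formula $\dim(\ker J)+\dim E_t-\dim(\ker J+E_t)=(n-1)+m-n=m-1$, whereas you phrase the same step as ``the linear functional $\mathbf{1}^T$ is nonzero on $E_t$, hence its kernel there has dimension $m-1$''.
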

\begin{proof}
We have $x^TSx=x^TJx-2x^TAx-x^TIx\geq -(2t+1)$ for any unit vector $x$ and hence $\lambda_0\geq -(2t+1)$, as claimed. Moreover, equality holds if and only if $\mathrm{dim}(\mathrm{ker}J\cap\mathrm{ker}(A-tI))\geq1$. By Perron--Frobenius theory, $A$ has a nonnegative $t$-eigenvector, which cannot be a $0$-eigenvector of $J$, and consequently $\mathrm{dim}(\mathrm{ker}J+\mathrm{ker}(A-tI))=n$. Therefore
\begin{align*}
\mathrm{dim}(\mathrm{ker}(&S-(2t+1)I))=\mathrm{dim}(\mathrm{ker}J\cap\mathrm{ker}(A-tI))\\
&=\mathrm{dim}(\mathrm{ker}J)+\mathrm{dim}(\mathrm{ker}(A-tI))-\mathrm{dim}(\mathrm{ker}J+\mathrm{ker}(A-tI))\\
&=n-1+m-n=m-1.\qedhere
\end{align*}
\end{proof}
The following result was announced, without a proof, in \cite{N}.
\begin{corollary}\label{neumcor}
Let $d\geq \left\lceil(2V+5)/3\right\rceil$, where $V$ is given in Theorem~\ref{NTH}. Then $N_{5}(d)=\left\lfloor3(d-1)/2\right\rfloor$.
\end{corollary}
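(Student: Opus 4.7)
The plan is to establish $N_5(d)=\lfloor 3(d-1)/2\rfloor$ by proving matching lower and upper bounds, with the upper bound being the substantive direction that uses both Theorem~\ref{NTH} and Lemma~\ref{previous}.

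For the lower bound, I would build an explicit Seidel matrix from a Dynkin graph. Set $m:=\lfloor (d-1)/2\rfloor$, let $k:=0$ if $d$ is odd and $k:=1$ if $d$ is even, and let $\Gamma$ be the disjoint union of $m$ triangles (copies of $\tilde A_2$) together with $k$ isolated vertices. Its adjacency matrix $A$ has largest eigenvalue $2$ with multiplicity $m$, since each triangle contributes one $2$-eigenvector by Perron--Frobenius and the isolated vertex contributes none. Applying Lemma~\ref{previous} with $t=2$ (and noting $m\ge 2$, which holds once $d\ge 6$, and in particular under our hypothesis on $d$), the Seidel matrix $S:=J-2A-I$ of order $n:=3m+k=\lfloor 3(d-1)/2\rfloor$ has smallest eigenvalue $-5$ with multiplicity $m-1=n-d$. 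Hence $S$ realizes $n$ equiangular lines in $\mathbb{R}^d$ with common angle $1/5$.

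For the upper bound, suppose $n$ equiangular lines with common angle $1/5$ exist in $\mathbb{R}^d$. If $n<V$, then the hypothesis $d\ge \lceil (2V+5)/3\rceil$ gives $V\le (3d-5)/2$, so $n\le V-1<\lfloor 3(d-1)/2\rfloor$ and we are done. So assume $n\ge V$. By Theorem~\ref{NTH}, the Seidel matrix is switching equivalent to some $S'=J-2A-I$ whose ambient graph $\Gamma'$ is a Dynkin graph; switching does not alter the spectrum, so $-5$ remains the smallest eigenvalue with multiplicity $n-d$. Applying Lemma~\ref{previous} with $t=2$ forces the largest eigenvalue of $A$ to equal $2$ with some multiplicity $m\ge 2$ and yields $n-d=m-1$, i.e., $m=n-d+1$.

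The key combinatorial input is then the following. In any Dynkin graph, the $2$-eigenspace of $A$ is spanned by the Perron vectors of those connected components that are extended Dynkin diagrams $\tilde A_i,\tilde D_i,\tilde E_6,\tilde E_7,\tilde E_8$ (each contributing exactly $1$ by Perron--Frobenius), while the classical components $A_i,D_i,E_6,E_7,E_8$ have largest eigenvalue strictly less than $2$. Hence $m$ equals the number of extended components. Since the smallest extended Dynkin diagram is $\tilde A_2$ on $3$ vertices, each extended component contains at least $3$ vertices of $\Gamma'$, giving $3m\le n$. Substituting $m=n-d+1$ yields $3(n-d+1)\le n$, i.e., $n\le (3d-3)/2$, and since $n$ is an integer we conclude $n\le \lfloor 3(d-1)/2\rfloor$.

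The argument is really just a careful bookkeeping once Theorem~\ref{NTH} is in hand. I expect the only subtlety to be verifying the Perron--Frobenius-based claim that the multiplicity of $2$ in a Dynkin graph equals the count of extended components (with each such contributing exactly one), and then observing the trivial inequality that each extended Dynkin diagram carries at least three vertices. Everything else is elementary algebraic manipulation.
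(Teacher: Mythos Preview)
Your proposal is correct and follows essentially the same approach as the paper: a direct Dynkin-graph construction for the lower bound, and for the upper bound an application of Theorem~\ref{NTH} and Lemma~\ref{previous} to obtain $m=n-d+1$ followed by the bound $3m\le n$. Your presentation differs only cosmetically: you unify the lower-bound construction for both parities (the paper invokes Proposition~\ref{existv1} for $d$ odd, which in fact produces the same graph of $(d-1)/2$ disjoint triangles), you handle $n<V$ by a separate case rather than via $n=N_5(d)\ge\lfloor 3(d-1)/2\rfloor\ge V$, and you spell out the Perron--Frobenius reason behind $m\le\lfloor n/3\rfloor$, which the paper simply asserts.
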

\begin{proof}
First we argue that $N_5(d)\geq\left\lfloor3(d-1)/2\right\rfloor$ for $d\geq 5$. For $d$ odd this immediately follows by setting $a=(d-1)/2$, $S=J_a-I_a$, and $b=3$ in Proposition~\ref{existv1}. For $d$ even consider the graph $\Gamma$ on $n=3(d-2)/2+1=\left\lfloor3(d-1)/2\right\rfloor$ vertices formed by $(d-2)/2$ disjoint triangles and an additional isolated node. By Lemma~\ref{previous} the smallest eigenvalue of the corresponding Seidel matrix $S$ is $-5$ of multiplicity $(d-2)/2-1$ and hence we have the desired configuration. Note that this construction is only interesting for $d\geq 186$.

Secondly we show that $N_5(d)\leq\left\lfloor3(d-1)/2\right\rfloor$ for $d\geq \left\lceil(2V+5)/3\right\rceil$. Suppose that $S$ is a Seidel matrix of the largest set of equiangular lines in $\mathbb{R}^d$ with common angle $1/5$. We estimate $n-d$, which is the multiplicity of the $-5$ eigenvalue of $S$ through the ambient graph $\Gamma$ as follows. From the first part and from the assumption on $d$ we find that $n\geq \left\lfloor3(d-1)/2\right\rfloor\geq V$. Therefore Theorem~\ref{NTH} applies and we may assume that the ambient graph $\Gamma$ with adjacency matrix $A=(J-S-I)/2$ is a Dynkin graph. By applying Lemma~\ref{previous}, it also follows that $A$ has largest eigenvalue at least $2$ of multiplicity $m=n-d+1$. Consequently $A$ has largest eigenvalue exactly $2$. Furthermore, since $m\leq\left\lfloor n/3\right\rfloor$, we have $n-d\leq\left\lfloor n/3\right\rfloor-1$ and we obtain $n\leq \left\lfloor 3(d-1)/2\right\rfloor$ as claimed.%Using Lemma~\ref{previous} with $t\leq 2$ and $\lambda_0=-5$ it follows that $A$ has largest eigenvalue $2$ of multiplicity $m\leq \left\lfloor n/3\right\rfloor$. Furthermore $S$ has eigenvalue $-5$ of multiplicity $n-d=m-1\leq\left\lfloor n/3\right\rfloor-1$, and we obtain $n\leq \left\lfloor 3(d-1)/2\right\rfloor$ as claimed.%From Lemma~\ref{previous} it follows that $A$ has largest eigenvalue at least $2$, and hence its multiplicity $m\leq \left\lfloor n/3\right\rfloor$. Then by applying Lemma~\ref{previous} again we have $n-d=m-1\leq\left\lfloor n/3\right\rfloor-1$ and we obtain $n\leq \left\lfloor 3(d-1)/2\right\rfloor$ as claimed.
\end{proof}
We conclude this section with an updated table containing bounds on $N_5(d)$.
\begin{theorem}
Bounds for the maximum number of equiangular lines in $\mathbb{R}^d$ with common angle $\alpha=1/5$ is given in Table \ref{newT} below.
\end{theorem}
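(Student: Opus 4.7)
The statement here is a compilation result, so the proof is really an audit: for each dimension $d$ (or range of dimensions) appearing in Table~\ref{newT}, one must identify (i) a construction achieving the claimed lower bound and (ii) a matching or nearly matching upper bound, drawing on the tools developed earlier in the paper. My plan is to organize the argument by dimension range rather than by technique.

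First I would dispatch the small dimensions $d\le 23$ by direct reference to Table~\ref{xyz} and Corollary~\ref{seethisthing}. The entries with $N(d)$ already known are handled by the classical constructions and the absolute/relative bounds (Theorem~\ref{absbd} and Proposition~\ref{s2p23}). The two new entries are $d=14$ and $d=16$, where the improved upper bounds $N_5(14)\le 29$ and $N_5(16)\le 41$ follow from Corollary~\ref{seethisthing}, while the lower bounds $28$ and $40$ come respectively from Example~\ref{ex415} (Tremain's construction via a regular two-graph on $36$ points) and Example~\ref{rank3g} (the strongly regular graph giving spectrum $\{[-5]^{24},[7]^{15},[15]^1\}$). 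I would also record that $d=18$ sits in this range via Example~\ref{STS19} giving $48$ lines and the absolute/relative bounds for the upper bound.

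Next I would address the ``intermediate'' range $24\le d\le 185$. The universal upper bound here is Theorem~\ref{thisLS}: any Seidel matrix containing a $J_6-I_6$ principal submatrix yields at most $276$ lines. For the remaining Seidel matrices (those avoiding such a submatrix), one shows by an independent-set/interlacing argument that the multiplicity of the $-5$ eigenvalue is small, yielding a bound no larger than $276$. Lower bounds come from Proposition~\ref{existv1} applied to the Seidel matrix of the $276$-line configuration in $\mathbb{R}^{23}$ (which produces $276$ lines in all dimensions up to $41$) together with the sporadic constructions in dimensions $20$, $21$, $22$, and $23$ (Taylor's example and the Witt-design examples). The entries for $42\le d\le 185$ are then filled in using the $J_a\otimes(J_b-2I_b)+I_{ab}$ construction of Proposition~\ref{existv1} and direct constructions, giving $\lfloor 3(d-1)/2\rfloor$ or a small refinement of it whenever this exceeds the sporadic values.

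Finally, for the ``large'' range $d\ge \lceil(2V+5)/3\rceil$ the answer is exactly $N_5(d)=\lfloor 3(d-1)/2\rfloor$ by Corollary~\ref{neumcor}. For the intermediate tail $186\le d<\lceil(2V+5)/3\rceil$ (where Neumaier's threshold has not yet kicked in), the same construction — disjoint triangles plus at most one isolated vertex, via Lemma~\ref{previous} — supplies the lower bound $\lfloor 3(d-1)/2\rfloor$, while Theorem~\ref{thisLS} supplies the matching upper bound. The main obstacle, and the only part that is not a direct quotation of a previously proved result, is verifying that the tabulated entries for the awkward dimensions $24\le d\le 42$ (where neither the relative bound nor the constructions of Theorem~\ref{larges} and Proposition~\ref{existv1} are immediately tight) are actually best possible among the techniques available; here I expect some case-by-case work applying the Corollary~\ref{S2mC} determinant congruence together with Theorem~\ref{maintnon} to rule out parameter sets not already excluded by the absolute or relative bound.
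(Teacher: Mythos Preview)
Your proposal has several genuine gaps, the most serious being a conflation of $N(d)$ and $N_5(d)$ for small dimensions. Table~\ref{xyz} records $N(d)$, the maximum over \emph{all} angles; for $d\le 13$ these values are realized with $\alpha=1/3$ (or $\alpha=1/\sqrt{5}$), not $\alpha=1/5$, and indeed $N_5(d)$ is strictly smaller (e.g.\ $N_5(7)=9$ while $N(7)=28$). The paper does not appeal to Table~\ref{xyz} here at all: for $d\le 8$ it uses the full classification of Seidel matrices from Section~\ref{sec5}; for $d=9$ it uses the relative bound ($n\le 13$) together with a targeted computer search extending the four order-$12$ Seidel matrices with $[-5]^3$; for $d=10$ it uses the Hadamard construction $(J_4-2I_4)^{\otimes 2}-I_{16}$; for $d=11$ a further computer extension; and for $d=12$ it combines Theorem~\ref{S4MC} applied to a conference graph on $26$ vertices (lower bound $20$) with Example~\ref{DRJACK} (ruling out $22$). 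None of this is recoverable from Table~\ref{xyz}.

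Second, your treatment of the intermediate and tail ranges misapplies Theorem~\ref{thisLS}. That result carries the hypothesis that $S$ contain an $I_6-J_6$ principal submatrix, and the paper does \emph{not} attempt to remove this hypothesis by an ``independent-set/interlacing argument'' as you suggest; it simply cites the SDP bounds $B(d)$ of \cite{ABARG} for $24\le d\le 136$, uses the absolute bound for $137\le d\le 185$, and records the upper bound $V$ (not $\lfloor 3(d-1)/2\rfloor$) for $186\le d\le\lceil(2V+2)/3\rceil$. In particular, for this last range the bounds in Table~\ref{newT} are \emph{not} matching, so there is nothing for Theorem~\ref{thisLS} to ``match''. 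Your anticipated case-by-case work on $24\le d\le 42$ via Corollary~\ref{S2mC} and Theorem~\ref{maintnon} is likewise not what the paper does; those dimensions are delegated entirely to \cite{ABARG}.
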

{\arraycolsep=1.8pt
\[\begin{array}{c|cccccccccccccccccc}
d  & 2$--$4 & 5 & 6 & 7 & 8  & 9  & 10 & 11 & 12 & 13 & 14 & 15 & 16 &17&18&19&20\\% & 17 & 18 & 19 & 20 & 21 & 22 & 23\\
%Bd & n      & 6 & 7 & 9 & 11 & 13 & 16 & 18 & 22 & 26 & 30 & 36 & 42 & 51 & 61 & 76 & 96 & 126 & 176 & 276\\
N_5(d)  & d & 6 & 7 & 9 & 10 & 12 & 16 & 18 & 20$--$21 & 26 & 28$--$29 & 36 & 40$--$41& 48$--$50& 48$--$61& 72$--$76 & 90$--$96  \\% & 40 & 48 & 48 & 72 & 90 & 126 & 176 & 276\\
%no.& 2,3,7 & 1 & 2 & 1 & 4  & 4  & 1  & \geq2   &  ? & 4 & ? & ?\\
\end{array}\]
\[\begin{array}{c|cccccccccccc}
d       & 21 & 22 & 23$--$60 & 61$--$136       & 137$--$185 & 186$--$\left\lceil(2V+2)/3\right\rceil & \left\lceil(2V+5)/3\right\rceil$--$\\
N_5(d)  & 126 & 176 & 276 & 276$--$B(d) & 276$--$d(d+1)/2 & \left\lfloor 3(d-1)/2\right\rfloor$--$V & \left\lfloor 3(d-1)/2\right\rfloor\\
%no.& & 1 & 2 & 1 & 4  & 4  & 1  & \geq2   &  ? & ? & ? & ?\\
\end{array}\]
}
\begingroup
\captionof{table}{Bounds for the maximum number of equiangular lines with $\alpha=1/5$.}\label{newT}
\endgroup
\begin{proof}
We compare upper bounds given by Proposition~\ref{s2p23} with lower bounds arising from direct constructions. For $d\leq 8$ we have $n\leq 12$ and we infer the result from the full classification of Seidel matrices (see Section \ref{sec5}). For $d=9$ we have $n\leq 13$, but $n=13$ lines could have only been obtained by extending one of the four Seidel matrices of order $12$ with smallest eigenvalue $-5$ of multiplicity $3$. This is shown to be impossible by a simple computer search. For $d=10$ there is equality in the relative bound and $16$ lines can be obtained from the symmetric Hadamard matrix $H=(J_4-2I_4)\otimes (J_4-2I_4)$ after removing its diagonal. The Seidel matrix $H-I_{16}$ can be extended, via an easy computer search, with further two lines to obtain $18$ equiangular lines in $\mathbb{R}^{11}$ (see Remark~\ref{remuselater}). For $d=13$ we have four inequivalent conference graphs \cite{MSP, TS}, and an application of Theorem~\ref{S4MC} shows the existence of $20$ lines in $\mathbb{R}^{12}$, while $22$ lines are impossible by Example~\ref{DRJACK}. Finally, cases $14\leq d\leq 23$ agree with the values of Table \ref{xyz}; cases $24\leq d\leq 136$ along with the upper bounds $B(d)$ are discussed in \cite{ABARG}; and the remaining values follow from Corollary~\ref{neumcor}.
\end{proof}
Finally, we remark that it would be nice to see a combinatorial interpretation of Seidel matrices with three distinct eigenvalues. Such new perspective might shed some light on the existence of the hypothetical Seidel matrices highlighted in the appendix. This will hopefully lead to further improvements upon the best known bounds on the number of equiangular lines in small dimensions.
\section*{Acknowledgments}
We are greatly indebted to Andries Brouwer and Donald Taylor for their helpful remarks regarding questions raised during the preparation of this manuscript. We are grateful for Alexander Barg and Wei-Hsuan Yu for sharing their notes prior to publication. We also thank the reviewers for their thorough work. This work was supported in part by the Hungarian National Research Fund OTKA K-77748; by the JSPS KAKENHI Grant Numbers $24\cdot02789$, and $24\cdot02807$; and by the ``$100$ talent'' program of the Chinese Academy of Sciences. Part of the computational results in this research were obtained using supercomputing resources at Cyberscience Center, Tohoku University. 

\appendix
\section{A supplementary table}\label{Ast}
Here we display a list of feasible spectra for Seidel matrices whose existence would lead to an attainment of or improvement upon the known maximum number of equiangular lines in dimensions $14$ and $16$--$20$. The table was compiled using Theorem~\ref{3g}, Corollary~\ref{S2mC}, and Corollary~\ref{thisstr}.
\[\begin{array}{cccccccc}
n & d & \lambda & \mu & \nu & \text{Existence} & \text{Remark}\\
\hline
 28 & 14 & \left[-5\right]^{14} & \left[3\right]^7 & \left[7\right]^7     & \text{Y} & \text{Example~\ref{ex415}}\\
 30 & 14 & \left[-5\right]^{16} & \left[5\right]^9 & \left[7\right]^5     & \text{N} & \text{Theorem~\ref{nonex}}\\
 40 & 16 & \left[-5\right]^{24} & \left[5\right]^6 & \left[9\right]^{10}  & ? &\\
 40 & 16 & \left[-5\right]^{24} & \left[7\right]^{15} & \left[15\right]^1 & \text{Y} & \text{Example~\ref{rank3g}}\\
 42 & 16 & \left[-5\right]^{26} & \left[7\right]^7 & \left[9\right]^9     & \text{N} & \text{Theorem~\ref{nonex}}\\
 48 & 17 & \left[-5\right]^{31} & \left[7\right]^8 & \left[11\right]^9    & \text{Y} & \text{Example~\ref{STS19}}\\
 49 & 17 & \left[-5\right]^{32} & \left[9\right]^{16} & \left[16\right]^1 & ? &\\ 
 48 & 18 & \left[-5\right]^{30} & \left[3\right]^6 & \left[11\right]^{12} & ? &\\
 48 & 18 & \left[-5\right]^{30} & \left[7\right]^{16} & \left[19\right]^2 & ? &\\
 54 & 18 & \left[-5\right]^{36} & \left[7\right]^9 & \left[13\right]^9    & ? &\\
 60 & 18 & \left[-5\right]^{42} & \left[11\right]^{15} & \left[15\right]^3& ? &\text{Remark~\ref{S4C2}} \\
 72 & 19 & \left[-5\right]^{53} & \left[13\right]^{16} & \left[19\right]^3& \text{Y} &\text{Example~\ref{ASCh}} \\
 75 & 19 & \left[-5\right]^{56} & \left[10\right]^1 & \left[15\right]^{18}& ? &\text{Lemma~\ref{S4L8}} \\
 90 & 20 & \left[-5\right]^{70} & \left[13\right]^5 & \left[19\right]^{15}& ? & \\
 95 & 20 & \left[-5\right]^{75} & \left[14\right]^1 & \left[19\right]^{19}& ? &\text{Lemma~\ref{S4L8}}\\
\end{array}\]
\begingroup
\captionof{table}{Feasible parameters of Seidel matrices with exactly three distinct eigenvalues.}
\endgroup

\begin{thebibliography}{10}
\bibitem{ABG}
\textsc{D. M. Appleby et al.}:
The monomial representations of the Clifford group,
{\it Quantum Information and Computation},
{\bf 12}, 0404--0431 (2012).

\bibitem{BMV}
\textsc{E. Bannai, A. Munemasa, B. Venkov}:
The nonexistence of certain tight spherical designs,
{\it Algebra i Analiz},
{\bf 16}, 1--23 (2004).

\bibitem{ABARG}
\textsc{A. Barg, W.-H. Yu}:
New bounds for equiangular lines,
{\it Discrete Geometry and Algebraic Combinatorics},
AMS Contemporary Mathematics Series, to appear (2013).

\bibitem{HB}
\textsc{A.E. Brouwer, W. H. Haemers}:
Spectra of Graphs,
Springer, New York, (2012).

\bibitem{HxD}
\textsc{F. C. Bussemaker, W. H. Haemers, E. Spence}:
The search for pseudo orthogonal Latin squares of order six, 
FEW Research Memorandum, Tilburg: Operations research,
{\bf 780}, 8 pages (1999).

\bibitem{BMS}
\textsc{F. C. Bussemaker, R. A. Mathon, J. J. Seidel}:
Tables of two-graphs, Combinatorics and Graph Theory, 
Lecture Notes in Mathematics,
{\bf 885}, 70--112 (1981).

\bibitem{H36}
\textsc{F. C. Bussemaker, J. J. Seidel}:
Symmetric Hadamard matrices of order $36$,
Annals of the New York Academy of Sciences, {\bf 175}, 66--79 (1970).

\bibitem{CCKS}
\textsc{A. R. Calderbank, P. J. Cameron, W. M. Kantor, and J. J. Seidel}:
$\mathbb{Z}_4$-Kerdock codes, orthogonal spreads, and extremal Euclidean line-sets,
{\it Proc. London Math. Soc.}
{\bf 75}(2) 436--480 (1997).

\bibitem{C}
\textsc{P. J. Cameron}:
Cohomological aspects of two-graphs,
{\it Math. Z.},
{\bf 157}, 101--119 (1977).

\bibitem{GRW}
\textsc{X.-M. Cheng, A. L. Gavrilyuk, G. R. W. Greaves, J. H. Koolen}:
Biregular graphs with three eigenvalues,
{\it preprint},
arXiv:1412.6971 [math.CO] (2014).

\bibitem{CO}
\textsc{H. Chuang, G. R. Omidi}:
Graphs with three distinct eigenvalues and largest eigenvalue less than $8$,
{\it Linear Algebra Appl.},
{\bf 430}, 2053--2062 (2009).

\bibitem{DC}
\textsc{D. de Caen}:
Large equiangular sets of lines in Euclidean space,
{\it Electron. J. Comb.},
{\bf 7}, \#55, 3 pp.~ (electronic) (2000).

\bibitem{VD}
\textsc{E. R. van Dam}:
Nonregular graphs with three eigenvalues,
{\it J. Comb. Theory B},
{\bf 73}, 101--118 (1998).

\bibitem{vDS}
\textsc{E. R. van Dam, E. Spence}:
Small regular graphs with four eigenvalues,
{\it Discr. Math.},
{\bf 189}, 233--257 (1998).

\bibitem{FS}
\textsc{M. Fickus, D. G. Mixon and J. C. Tremain}:
Steiner equiangular tight frames,
{\it preprint}, arXiv:1009.5730 [math.FA] (2010).

\bibitem{RND}
\textsc{E. Ghorbani}:
On eigenvalues of Seidel matrices and Haemers' conjecture,
{\it preprint}, arXiv:1301.0075 [math.CO] (2013).

\bibitem{GR}
\textsc{C. Godsil, A. Roy}:
Equiangular lines, mutually unbiased bases, and
spin models,
{\it European J. Combin.}
{\bf 30}, 246--262 (2009).

\bibitem{GMS}
\textsc{S. Gosselin}:
Regular Two-Graphs and Equiangular Lines, MSc thesis,
Waterloo, Ontario, Canada (2004).

\bibitem{HAA}
\textsc{J. Haantjes}:
Equilateral point-sets in elliptic two- and three-dimensional spaces,
{\it Nieuw Arch. Wisk.},
{\bf 22}, 355--362 (1948).

\bibitem{HAM}
\textsc{W. H. Haemers}:
Seidel switching and graph energy,
{\it MATCH Commun. Math. Comput. Chem.},
{\bf 68}, 653--659 (2012).

\bibitem{Sx}
\textsc{J. Hage, T. Harju, E. Welzl}:
Euler graphs, triangle-free graphs and bipartite
graphs in switching classes,
{\it Fundamenta Informaticae},
{\bf 58}, 23--37 (2003).

\bibitem{HP}
\textsc{R. B. Holmes, V. I. Paulsen}:
Optimal frames for erasures,
{\it Linear Algebra Appl.},
{\bf 377}, 31--51 (2004).

\bibitem{WK}
\textsc{W. M. Kantor}:
Codes, quadratic forms and finite geometries,
{\it Proceedings of Symposia in Applied Mathematics},
{\bf 50}, 153--177 (1995).

\bibitem{WKxd}
\textsc{W. M. Kantor}:
MUBs inequivalence and affine planes,
{\it J. Math. Phys.},
{\bf 53}, 032204 (2012).

\bibitem{KM}
\textsc{J. H. Koolen, V. Moulton}:
Maximal energy graphs,
{\it Advances Appl. Math.}
{\bf 26}, 47--52 (2001).

\bibitem{LS}
\textsc{P. W. H. Lemmens, J. J. Seidel}:
Equiangular lines,
{\it J. Algebra},
{\bf 24}, 494--512 (1973).

%\bibitem{NEWLI}
%\textsc{J. H. van Lint}:
%Kerdock and Preparata codes,
%{\it Cong. Numerantium},
%{\bf 39}, 25--41 (1983).

\bibitem{vLS}
\textsc{J. H. van Lint, J. J. Seidel}:
Equilateral point sets in elliptic geometry,
{\it Indag. Math.},
{\bf 28}, 335--348 (1966).

\bibitem{L}
\textsc{V. A. Liskovec}:
Enumeration of Euler graphs,
{\it Vesc\=\i\ Akad. Navuk BSSR Ser. Fiz.-Mat. Navuk.},
{\bf 6}, 38--46 (1970).

\bibitem{MAK}
\textsc{A. A. Makhnev}:
On the nonexistence of strongly regular graphs with parameters $(486,165,36,66)$,
{\it Ukrainian Mathematical Journal},
{\bf 54}, 1137--1146 (2002).

\bibitem{MS}
\textsc{C. L. Mallows, N. J. A. Sloane}:
Two-graphs, switching classes, and Euler graphs are equal in number,
{\it SIAM J. Appl. Math.},
{\bf 28}, 876--880 (1975).

\bibitem{MATE}
\textsc{M. Matolcsi, I. Z. Ruzsa, M. Weiner}:
Systems of mutually unbiased Hadamard matrices containing real and complex matrices,
{\it Australas. J. Comb.},
{\bf 55}, 35--48 (2013).

\bibitem{MK}
\textsc{B. D. McKay}:
Home page. \url{http://cs.anu.edu.au/~bdm/}.

\bibitem{BMR}
\textsc{B. D. McKay}:
Small graphs are reconstructible,
{\it Australas. J. Comb.},
{\bf 15}, 123--126 (1997).

\bibitem{MSP}
\textsc{B. D. McKay, E. Spence}:
Classification of regular two-graphs on $36$ and $38$ vertices,
{\it Australas. J. Comb.},
{\bf 24}, 293--300 (2001).

\bibitem{MzK}
\textsc{M. Muzychuk, M. Klin}:
On graphs with three eigenvalues,
{\it Discr. Math},
{\bf 189}, 191--207 (1998).

\bibitem{N}
\textsc{A. Neumaier}:
Graph representations, two-distance sets, and equiangular lines,
{\it Linear Algebra Appl.},
{\bf 114-115}, 141--156 (1989).

\bibitem{OKUDA}
\textsc{T. Okuda, W.-H. Yu}:
Nonexistence of tight spherical designs of harmonic index $4$,
{\it preprint},
arXiv:1409.6995 [math.MG] (2014).

\bibitem{R}
\textsc{R. W. Robinson}:
Enumeration of Euler graphs,
{\it Proof Techniques in Graph Theory}. Academic Press, NY, 147--153 (1969).

\bibitem{SG}
\textsc{A. J. Scott, M. Grassl}:
SIC-POVMs: A new computer study,
{\it J. Math. Phys.},
{\bf 51}, 042203 (2010).

\bibitem{HIG}
\textsc{J. J. Seidel}:
`Discrete non-Euclidean geometry',
{\it In} Buekenhout (ed.), {\it Handbook of Incidence Geometry},
Elsevier Science, Amsterdam, The Netherlands (1995).

\bibitem{SEL}
\textsc{J. J. Seidel}:
Graphs and two-graphs,
{\it Proc. 5th Southeastern Conf.\ on Combinatorics,
Graph Theory, and Computing},
Winnipeg, Canada, Utilitas Mathematica Publishing Inc.\ (1974).

\bibitem{NEWSCREF}
\textsc{T. Soza\'nski}: Enumeration of weak isomorphism classes of signed graphs,
{\it J. Graph Theory},
{\bf 4}, 127--144 (1980).

\bibitem{TS}
\textsc{E. Spence}: Home page. \url{http://www.maths.gla.ac.uk/~es/}.

\bibitem{TS2}
\textsc{E. Spence}: The strongly regular $(40,12,2,4)$ graphs,
{\it Electron. J. Comb.},
{\bf 7}, \#22, 4 pp.~ (2000).

\bibitem{FSZ}
\textsc{F. Sz\"oll\H{o}si}:
Construction, classification and parametrization of complex Hadamard matrices,
PhD Thesis, Central European University, Budapest (2012).

\bibitem{WEB}
\textsc{F. Sz\"oll\H{o}si}:
Home page. \url{http://www.math.bme.hu/~szoferi/Seidel/}.

\bibitem{T}
\textsc{D. E. Taylor}:
Regular $2$-graphs,
{\it Proc. London Math. Soc.}
{\bf 35}, 257--274 (1977).

\bibitem{TTH}
\textsc{D. E. Taylor}:
Some topics in the theory of finite groups,
PhD thesis, University of Oxford (1972).

\bibitem{JCT}
\textsc{J. C. Tremain}:
Concrete constructions of equiangular line sets,
arXiv:0811.2779 [math.MG], (2008).
\end{thebibliography}
\end{document}